\newtheorem{theorem}{Theorem}[section]
\newtheorem{proposition}[theorem]{Proposition}
\newtheorem{lemma}[theorem]{Lemma}
\newtheorem{corollary}[theorem]{Corollary}
\newtheorem{definition}[theorem]{Definition}
\newcounter{Examplecount}
\newcommand\beq{\begin{equation}}
\newcommand\eeq{\end{equation}}
\newcommand\bce{\begin{center}}
\newcommand\ece{\end{center}}
\newcommand\bea{\begin{eqnarray}}
\newcommand\eea{\end{eqnarray}}
\newcommand\ba{\begin{array}}
\newcommand\ea{\end{array}}
\newcommand\ben{\begin{enumerate}}
\newcommand\een{\end{enumerate}}
\newcommand\bit{\begin{itemize}}
\newcommand\eit{\end{itemize}}
\newcommand\brr{\begin{array}}
\newcommand\err{\end{array}}
\newcommand\bt{\begin{tabular}}
\newcommand\et{\end{tabular}}
\newcommand\nn{\nonumber}
\newcommand\ms{\medskip}
\newcommand\ul{\underline}
\renewcommand\S{{\mathcal S}}
\newcommand\Al{\operatorname{Allow}}
\newcommand\Pat{\operatorname{Pat}}
\newcommand{\W}{W}
\newcommand{\B}{B}
\newcommand{\BV}{\hat{b}}
\newcommand{\BW}{b}
\newcommand\Saw{M} 
\newcommand\WW{{\mathcal W}}
\def\red{\operatorname{st}}
\newcommand\des{\operatorname{des}}
\newcommand\hp{\hat\pi}
\newcommand\T{{\mathcal T}}
\providecommand{\fr}[1]{\{#1\}}
\providecommand{\fl}[1]{\lfloor#1\rfloor}
\newcommand{\lessv}{\prec}
\newcommand{\lesseqv}{\preccurlyeq}
\newcommand{\lessw}{\vartriangleleft}
\newcommand{\lesseqw}{\trianglelefteq}
\newcommand\G{\Gamma}
\providecommand{\andon}[1]{#1\rightarrow} 
\title{Permutations and $\beta$-shifts}
\author{Sergi Elizalde  \\ \\
Department of Mathematics\\ Dartmouth College \\ Hanover, NH 03755}
\date{}
\begin{document}

\maketitle
\begin{abstract}
Given a real number $\beta>1$, a permutation $\pi$ of length $n$ is realized by the $\beta$-shift if there is some $x\in[0,1]$ such that the relative order of the sequence $x,f(x),\dots,f^{n-1}(x)$,
where $f(x)$ is the factional part of $\beta x$, is the same as that of the entries of $\pi$.
Widely studied from such diverse fields as number theory and automata theory, $\beta$-shifts are prototypical examples one-dimensional chaotic dynamical systems.
When $\beta$ is an integer, permutations realized by shifts where studied in~\cite{Elishifts}. In this paper we generalize some of the results to arbitrary $\beta$-shifts.
We describe a method to compute, for any given permutation~$\pi$, the smallest $\beta$ such that $\pi$ is realized by the $\beta$-shift. We also give a way to
determine the length of the shortest forbidden (i.e., not realized) pattern of an arbitrary $\beta$-shift.
\end{abstract}

\section{Introduction}

Forbidden order patterns in piecewise monotone maps on one-dimensional intervals are a powerful tool to distinguish random from deterministic time series.
This contrasts with the fact that, from the viewpoint of symbolic dynamics, chaotic maps are able to produce any symbol pattern,
and for this reason they are used in practice to generate pseudo-random sequences.
However, this is no longer true when one considers order patterns instead, as shown in~\cite{AEK,Bandt}. From now on, we will use the term patterns to refer to order patterns.

The allowed patterns of a map on a one-dimensional interval are the permutations given by the
relative order of the entries in the finite sequences (usually called orbits) obtained by successively iterating the map,
starting from any point in the interval. For any fixed piecewise monotone map, there are some
permutations that do not appear in any orbit. These are called the forbidden patterns of the map.
Understanding the forbidden patterns of chaotic maps is important because the absence of these patterns is what distinguishes
sequences generated by chaotic maps from random sequences.

Determining the allowed and forbidden patterns of a given map is a difficult problem in general. The only non-trivial family of maps
for which the sets of allowed patterns have been characterized are shift maps. The first results in this direction are found
in~\cite{AEK}, and a characterization and enumeration of the allowed patterns of shift maps appears in~\cite{Elishifts}.
For another family, the so-called logistic map, a few basic properties of their set of forbidden patterns have been studied~\cite{Eliu}.

The focus of this paper are the allowed and forbidden patterns
of $\beta$-shifts, which are a natural generalization of shifts.
The combinatorial description of $\beta$-shifts is more elaborate than that of shifts, yet still simple enough for $\beta$-shifts
to be amenable to the study of their allowed patterns. At the same time, $\beta$-shifts are good prototypes of chaotic maps because they
exhibit important properties of low-dimensional chaotic dynamical systems, such sensitivity to initial conditions, strong mixing, and a dense set of periodic points.
 The origin of $\beta$-shifts lies in the study of expansions of
real numbers in an arbitrary real base $\beta>1$, which were introduced by R\'enyi~\cite{Ren}. Measure-theoretic properties of $\beta$-shifts
and their connection to these expansions have been extensively studied in the literature (see for example~\cite{Bla,Hof,Par,Schm}).
For instance, it is known that the base-$\beta$ expansion of $\beta$ itself determines the symbolic dynamics of the corresponding $\beta$-shift.
Finally, $\beta$-shifts have also been considered in computability theory~\cite{Sim}.

Related to the study of the allowed patterns of $\beta$-shifts, we are interested in the problem of determining,
for a given permutation $\pi$, what is the largest $\beta$ such that $\pi$ is a forbidden pattern of the $\beta$-shift.
We call this parameter the shift-complexity of the permutation. Putting technical details aside, this problem is equivalent to finding the smallest $\beta$ such
that $\pi$ is realized by (i.e., is an allowed pattern of) the $\beta$-shift

In Section~\ref{sec:background} we formally define allowed and forbidden patterns of maps,
and we describe shifts and $\beta$-shifts from a combinatorial perspective.
In Section~\ref{sec:wordstat} we introduce two relevant real-valued statistics on words. In Section~\ref{sec:shiftcomplexity} we study some properties of the domain of $\beta$-shifts, and we define shift-complexity.
Sections~\ref{sec:perm2word} and~\ref{sec:Bpi} explain how to determine the shift-complexity of a given permutation~$\pi$, by expressing
this parameter as a root of a certain polynomial whose coefficients depend on $\pi$ in a non-trivial way. In Section~\ref{sec:examples} we give examples of the usage of our method for particular permutations.
Finally, in Section~\ref{sec:shortest} we study the problem of finding, for given $\beta$, the shortest forbidden pattern of the $\beta$-shift.

\section{Background and notation}\label{sec:background}

Let $[n]=\{1,2,\dots,n\}$, and let $\S_n$ be the set of permutations of $[n]$. In the rest of the paper, the term permutation will always refer to an element of $\S_n$ for some $n$.
For a real number $x$, we use $\fl{x}$, $\lceil x\rceil$, and $\fr{x}$ to denote the floor, ceiling, and fractional part of $x$, respectively.
The fractional part of $x$ is also denoted by $x\mod1$ in some of the literature about shifts.

Most of the words considered in this paper will be infinite words over the alphabet $\{0,1,2,\dots\}$ that use only finitely many different letters.

\subsection{Allowed patterns of a map}

Let $X$ be a totally ordered set. Given a finite sequence $x_1,x_2,\dots,x_n$ of different elements of $X$, define its standardization $\red(x_1,x_2,\dots,x_n)$ to be the permutation of $[n]$ that is obtained by
replacing the smallest element in the sequence with $1$, the second smallest with $2$, and so on. For example, $\red(4,7,1,6.2,\sqrt{2})=35142$.

Let $f$ be a map $f:X\rightarrow X$. Given $x\in X$ and $n\ge1$, we define
$$\Pat(x,f,n)=\red(x,f(x),f^2(x),\dots,f^{n-1}(x)),$$
provided that there is no pair $1\le i< j\le n$ such that
$f^{i-1}(x)=f^{j-1}(x)$. If such a pair exists, then $\Pat(x,f,n)$ is not defined. When it is defined, then clearly $\Pat(x,f,n)\in\S_n$.

If $\pi\in\S_n$ and there is some $x\in I$ such that $\Pat(x,f,n)=\pi$, we say that $\pi$ is {\em realized} by $f$, or that $\pi$ is an {\em allowed pattern} of $f$.
The set of all permutations realized by $f$ is denoted by $\Al(f)=\bigcup_{n\ge1} \Al_n(f)$, where $$\Al_n(f)=\{\Pat(x,f,n): x\in X\}\subseteq\S_n.$$
The remaining permutations are called {\em forbidden patterns} of $f$.

\subsection{Shift maps}

Special cases of dynamical systems are shift systems. Shifts are interesting from a combinatorial perspective due to their simple definition, and at the same time they are
important dynamical systems because they
exhibit some key features of low-dimensional chaos, such as sensitivity to initial conditions, strong mixing, and a dense set of periodic
points.

For each $N\ge2$, let $\WW_N$ be the set of infinite words on the alphabet $\{0,1,\dots,N{-}1\}$, equipped with
the lexicographic order. The {\em shift on $N$ symbols} is defined to be the map
\bce\bt{cccc} $\Sigma_N:$ & $\WW_N$ & $\longrightarrow$ & $\WW_N$ \\  & $w_1w_2w_3\dots$ & $\mapsto$ & $w_2w_3w_4\dots$.\et\ece
For a detailed description of the associated dynamical system, see~\cite{AEK}.

According to the above definitions, we have for example that $\Pat(2102212210\dots,\Sigma_3,7)=4217536$, because the relative order of the successive shifts is
\bce\bt{rll}
$2102212210\dots$ & \quad & $4$ \\
$102212210\dots$ & \quad & $2$ \\
$02212210\dots$ & \quad & $1$ \\
$2212210\dots$ & \quad & $7$ \\
$212210\dots$ & \quad & $5$ \\
$12210\dots$ & \quad & $3$ \\
$2210\dots$ & \quad & $6$, \\ \et\ece
regardless of the entries in place of the dots.

Let $\Upsilon_N\subset\WW_N$ be the set of all words of the form $u(N{-}1)^\infty$, where $u$ is a finite word, and we use the notation $x^\infty=xxx\dots$.
Then $\WW_N\setminus\Upsilon_N$ is closed under shifts, and the map
\bce\bt{cccc} $\varphi:$ & $\WW_N\setminus\Upsilon_N$ & $\longrightarrow$ & $[0,1)$ \\  & $w_1w_2w_3\dots$ & $\mapsto$ & $\sum_{i\ge1} w_i N^{-i}$\et\ece
is an order-preserving bijection, also called an {\em order-isomorphism}. The map $\Saw_N=\varphi\circ\Sigma_N\circ\varphi^{-1}$ from $[0,1)$ to itself is the so-called {\em sawtooth map} $$\Saw_N(x)=\fr{Nx}$$
(see Figure~\ref{fig:sawtooth}). We say in this case that $\Sigma_N$ and $\Saw_N$ are order-isomorphic. As a consequence, $\Sigma_N$ and $\Saw_N$ have the same allowed and forbidden patterns.

\begin{figure}[hbt]
\bce\epsfig{file=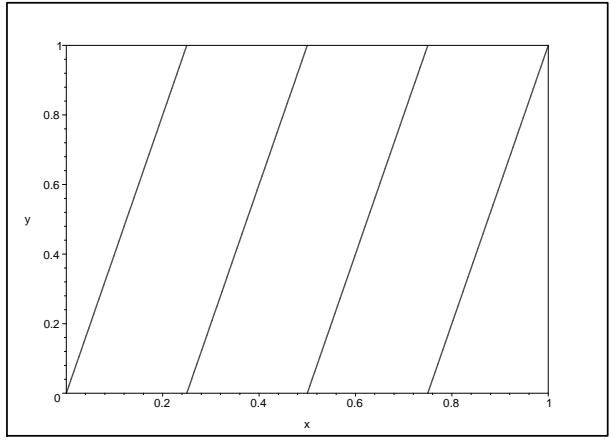,height=8cm,angle=-90} \caption{The sawtooth map $\Saw_4(x)=\fr{4x}$.\label{fig:sawtooth}}\ece
\end{figure}

Allowed and forbidden patterns of shifts (equivalently, sawtooth maps) were first studied in~\cite{AEK}, where
the authors prove the following result. 

\begin{theorem}[\cite{AEK}]\label{th:aek_minshift} For $N\ge2$, the shortest forbidden patterns of the shift $\Sigma_N$ have length $N+2$.
\end{theorem}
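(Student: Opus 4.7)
The theorem has two parts: (a) every permutation of length at most $N+1$ is realized by $\Sigma_N$, and (b) some permutation of length $N+2$ is not. Both rest on the same setup. If $w = w_1 w_2 \cdots \in \WW_N$ realizes $\pi \in \S_n$ and $\sigma := \pi^{-1}$, then $\Sigma^{i-1}w$ begins with $w_i$, so listing the shifts in lex-increasing order yields a weakly increasing sequence of first letters
\[ w_{\sigma(1)} \le w_{\sigma(2)} \le \cdots \le w_{\sigma(n)} \in \{0,1,\dots,N-1\}. \]
Whenever $w_{\sigma(k)} = w_{\sigma(k+1)}$, the strict lex inequality propagates one position to the right: we need $\Sigma^{\sigma(k)}w < \Sigma^{\sigma(k+1)}w$, which if both $\sigma(k)+1,\sigma(k+1)+1 \in [n]$ reduces to the combinatorial condition $\pi(\sigma(k)+1) < \pi(\sigma(k+1)+1)$; otherwise it is a constraint only on letters of $w$ past position $n$, which are free to set.

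For (a), if $n \le N$ I would set $w_{\sigma(k)} := k-1$, making $w_1,\ldots,w_n$ pairwise distinct in $\{0,\ldots,N-1\}$; then first letters alone rank the shifts and $\pi$ is realized regardless of how $w$ extends. If $n = N+1$, pigeonhole forces exactly one equality in the chain, and I would choose its position $k^*$ so that either (i) $\pi(\sigma(k^*)+1) < \pi(\sigma(k^*+1)+1)$ holds already in $\pi$, or (ii) one of $\sigma(k^*),\sigma(k^*+1)$ equals $n$, in which case the resulting lex constraint lives on the free tail $w_{n+1}, w_{n+2},\ldots$ and can be met non-saturatingly. A short case analysis on the value of $\pi(n)$ shows a valid $k^*$ always exists.

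For (b), the pigeonhole count at $n = N+2$ forces at least two equalities among the $N+1$ comparisons $w_{\sigma(k)} \le w_{\sigma(k+1)}$. The plan is to exhibit a specific $\pi_N \in \S_{N+2}$ for which at most one of the $N+1$ equality constraints is satisfiable, making two simultaneous equalities impossible. My approach is to require that the sequence $\pi_N(\sigma(k)+1)$, defined for $k \ne \pi_N(n)$, be strictly decreasing in $k$---forcing all $N$ in-range constraints to fail combinatorially---while arranging the one out-of-range constraint to be satisfiable but isolated. Concretely, the strict-decrease condition translates into the recursive system $\sigma(k)+1 = \sigma(\pi_N(\sigma(k)+1))$ for an appropriately chosen monotone labeling, which I would solve explicitly. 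The base cases $\pi_2 = 3421$, $\pi_3 = 34251$, $\pi_4 = 435261$ work---each can be verified by exhausting the finitely many weakly increasing fillings of $w_{\sigma(1)} \le \cdots \le w_{\sigma(n)}$ in $\{0,\dots,N-1\}$.

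The principal obstacle, and the place I expect the heaviest lifting, is producing $\pi_N$ uniformly in $N$ and verifying unsatisfiability in one stroke; I would carry out the recursion for general $N$, check that the resulting $\sigma$ is a bona fide permutation of $[N+2]$, and argue that the unique Case A constraint admits only one equality placement---thereby reaching the desired contradiction with the pigeonhole count.
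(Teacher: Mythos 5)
The paper cites Theorem~\ref{th:aek_minshift} from \cite{AEK} and does not reprove it, so there is no in-paper proof to line up against; the closest analogue is the machinery of Sections~\ref{sec:perm2word}--\ref{sec:shortest}, which handles the cascading of lex-tiebreaks via the cyclic permutation $\hat\pi$ and its descents. Judged on its own, your proposal has the right skeleton (pigeonhole on the weakly increasing chain of first letters, with a combinatorial constraint at each collision), but there are concrete gaps in both halves. In part (a), the ``out-of-range escape'' of your case (ii) is not free: if $\pi(n)=1$ and $\pi(n-1)=2$, then $\sigma(1)=n$ and $\sigma(2)=n-1$, and placing the equality at $k^*=1$ requires $\Sigma^{n}w<\Sigma^{n-1}w$; but $w_n=w_{\sigma(1)}=0$ forces $w_{n+1}\le 0$, hence $w_{n+1}=0$, then $w_{n+2}\le 0$, and so on, so both shifts equal $0^\infty$ and the strict inequality fails. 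This special case occurs precisely for $\tau$ (which, ironically, you pick as a base case in part (b)), so the ``short case analysis on $\pi(n)$'' you defer must detect it and fall back to an in-range $k^*$ --- that is exactly the nontrivial work your sketch skips.

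In part (b), your base case $\pi_2=3421$ does not satisfy your own stated structural property: for $\pi=3421$ one has $\sigma=4312$, and the sequence $\pi(\sigma(k)+1)$ over $k\in\{2,3,4\}$ is $(1,4,2)$, which is not decreasing. The permutation that actually matches is $\rho^R=3241$, with sequence $(4,2,1)$; and indeed $\rho^R\in\S_{N+2}$ always ends in $1$, so only $k=1$ is out of range, all $N$ in-range constraints fail, and at most one equality can occur against a pigeonhole demand of at least two. Note this argument does not need the lone out-of-range constraint to be ``satisfiable but isolated'' --- only that there is a single out-of-range position, so that worry in your last paragraph is misdirected. The essential missing pieces are therefore: the correct uniform witness ($\rho^R$, not an ad hoc mix of $\tau$ and $\rho^R$), a verification for all $n$ that its $\pi(\sigma(k)+1)$ sequence is decreasing and that $\pi(n)=1$, and the unaddressed cascading obstruction in part (a). As written the proposal is a plausible outline, not a proof.
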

For example, the shortest forbidden patterns of $\Sigma_4$ are
$162534$, $435261$, $615243$, $342516$, $453621$, $324156$.
In fact, it was later shown in~\cite{Elishifts} that there are exactly six forbidden patterns of minimum length.
\begin{proposition}[\cite{Elishifts}]\label{prop:6special}
For every $N\ge 2$, the shortest forbidden patterns of $\Sigma_N$, which have length $n=N+2$, are
$\{\rho,\rho^R,\rho^C,\rho^{RC},\tau,\tau^C\}$, where
$$\rho\,=\, 1\, n\, 2\, (n{-}1)\, 3\, (n{-}2)\, \dots, \qquad \tau\, =\,\dots\, 4\, (n{-}1)\, 3\, n\, 2\, 1,$$
and $^R$ and $^C$ denote the reversal (obtained by reading the entries from right to left)
and complementation (obtained by replacing each entry $i$ with $n{+}1{-}i$) operations, respectively.
\end{proposition}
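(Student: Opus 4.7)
Theorem~\ref{th:aek_minshift} already pins the minimum length of a forbidden pattern at exactly $n=N+2$, so the content of the proposition is the explicit list: one has to show (a)~each of the six listed permutations is forbidden by $\Sigma_N$, and (b)~every other $\pi\in\S_{N+2}$ is realized. The workhorse throughout is the translation of the inequalities $\Pat(x,\Sigma_N,n)=\pi$ into combinatorial constraints on $x=x_1x_2\dots$: consecutive ranks with $\pi_i<\pi_{i+1}$ force $x_{i+1}\le x_{i+2}$, ranks with $\pi_i>\pi_{i+1}$ force $x_{i+1}>x_{i+2}$, and more generally, whenever $x_i=x_j$ the pair $(\sigma^{i-1}(x),\sigma^{j-1}(x))$ has the same relative order as $(\sigma^i(x),\sigma^j(x))$.

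For part (a), I would start with $\rho=1\,n\,2\,(n{-}1)\,3\,(n{-}2)\dots$. Its zigzag shape forces an alternating sequence of weak ascents and strict descents $x_2\le x_3>x_4\le x_5>\dots$, and the finer rank information (the rank-$n$ shift exceeds rank-$(n{-}1)$, which exceeds rank-$(n{-}2)$, etc.) combined with the recursive rule above forces strict inequalities that separate $n-1=N+1$ of the digits, exceeding the alphabet of $\Sigma_N$. A parallel argument, reading from the right, handles $\tau$. The remaining four patterns $\rho^R,\rho^C,\rho^{RC},\tau^C$ then come for free once I verify that $\Al(\Sigma_N)$ is closed under the two involutions $\pi\mapsto\pi^C$ (realized on words by the order-reversing digit map $d\mapsto N{-}1{-}d$) and $\pi\mapsto\pi^R$ (realized by the time-reversal symmetry on bi-infinite orbits), both of which are standard facts for full shifts.

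For part (b), the plan is to build an explicit realizing word for every other $\pi\in\S_{N+2}$ by partitioning the value range $\{1,\dots,n\}$ into at most $N$ contiguous intervals $I_0<I_1<\dots<I_{N-1}$ and setting $x_i=d$ whenever $\pi_i\in I_d$. This automatically gives the correct coarse ordering between shifts beginning with different digits; the remaining content is to choose the partition so that, within each block, the induced ordering of the positions $i+1$ for $i\in\pi^{-1}(I_d)$ agrees with $\pi$. The claim of the proposition is exactly that for $\pi\notin\{\rho,\rho^R,\rho^C,\rho^{RC},\tau,\tau^C\}$ some partition works, and for these six no partition does.

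The main obstacle is this consistency check in (b): one must exhibit a valid partition for every non-exceptional permutation of length $N+2$. My preferred formalization is to build a directed graph on $\{1,\dots,n\}$ whose arcs record the forced strict-inequality constraints between the $x_i$'s implied by $\pi$, and then show that this graph fails to admit a compatible labeling by $\{0,\dots,N{-}1\}$ precisely for the six listed patterns. This is substantially harder than (a) because the obstruction has to be characterized uniformly in $N$, and I would expect to lean on the combinatorial characterization of $\Al_n(\Sigma_N)$ developed in~\cite{Elishifts}, which reduces (b) to an enumeration of length-$(N+2)$ permutations failing a specific structural condition.
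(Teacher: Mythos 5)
The paper does not prove this proposition; it imports it verbatim from~\cite{Elishifts}. So there is no in-paper argument to compare against, and I can only assess your sketch on its own terms. Your part~(a) contains a genuine error. You propose to establish only that $\rho$ and $\tau$ are forbidden, and to obtain the other four ``for free'' from closure of $\Al(\Sigma_N)$ under complementation and reversal. Closure under $C$ is fine (conjugate by $d\mapsto N{-}1{-}d$), but closure under $R$ is \emph{false}. Concretely, for $N=2$ (so $n=4$) we have $\tau=3421$, which is forbidden for $\Sigma_2$, while $\tau^R=1243$ has $N(1243)=2$ (Table~\ref{tab:Npi}) and hence is allowed. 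Indeed, the set $\Gamma_n$ itself is not closed under $R$ — it contains $\tau$ and $\tau^C$ but not $\tau^R$ or $\tau^{RC}$ — so if $R$ were a symmetry of $\Al(\Sigma_N)$ the proposition you are trying to prove would already be inconsistent. The underlying reason is that, although word-reversal conjugates the bi-infinite shift to its inverse as a topological system, it does \emph{not} respect the lexicographic order on right tails, which is what order patterns see; ``time-reversal symmetry on bi-infinite orbits'' is a symmetry of the dynamics but not of the ordered dynamics. Consequently you would have to prove $\rho$, $\rho^R$, and $\tau$ forbidden by three separate direct arguments (with $\rho^C$, $\rho^{RC}$, $\tau^C$ then following from the legitimate $C$-symmetry). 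Part~(b) is, as you say, the substantive half; your outline is a reasonable description of the kind of combinatorics involved, but as written it defers the real work to ``the combinatorial characterization of $\Al_n(\Sigma_N)$ developed in~\cite{Elishifts},'' which is essentially to say the proof is the reference's proof. That is acceptable given the paper itself treats the result as a citation, but it means your proposal is a plan for reconstructing the cited argument rather than a self-contained proof, and the plan as stated has the reversal-symmetry hole above.
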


A formula is given in \cite{Elishifts} to compute, for any given permutation $\pi$, the minimum number of symbols needed in an alphabet in order for $\pi$ to be realized by a shift,
that is,
\beq\label{def:N}N(\pi):=\min\{N:\pi\in\Al(\Sigma_N)\}.\eeq
Table~\ref{tab:Npi} shows the values of $N(\pi)$ for all permutations of length up to~5.

\begin{table}[hbt]
$$\begin{array}{|c|c|}
\hline
  \pi & N(\pi) \\ \hline
  12, 21 & 2  \\ \hline
  132, 231, 312, 213; 321, 123 & 2  \\ \hline
  \begin{array}{c} 1234, 4321; 1243, 4312; 1324, 4231; 1342, 2431, 4213, 3124; \\ 1432, 2341, 4123, 3214; 2143, 3412, 2413, 3142\end{array} & 2 \\ \hline
  1423, 3241, 4132, 2314; 3421, 2134 & 3 \\ \hline
  \ba{c}
  12345, 12354, 12435, 12453, 12543, 13254, 13452, 13524, 13542, 14253, 14325, 15432,\\
  21543, 23451, 23541, 24135, 24513, 25314, 25413, 25431, 31254, 31425, 31542, 32154, \\
  34512, 35124, 35241, 35412, 41235, 41253, 41352, 42153, 42531, 43125, 43215, 45123, \\
  51234, 52341, 52413, 53124, 53142, 53214, 53412, 54123, 54213, 54231, 54312, 54321 \ea   & 2 \\ \hline
  \ba{c}
  12534, 13245, 13425, 14235, 14352, 14523, 14532, 15234, 15324, 15342, 15423, \\
  21345, 21354, 21435, 21453, 21534, 23154, 23415, 23514, 24153, 24315, 24351, \\
  24531, 25134, 25143, 25341, 31245, 31452, 31524, 32145, 32451, 32514, 32541, \\
  34125, 34152, 34215, 34521, 35142, 35214, 35421, 41325, 41523, 41532, 42135, \\
  42315, 42351, 42513, 43152, 43251, 43512, 45132, 45213, 45231, 45312, 45321, \\
  51243, 51324, 51342, 51432, 52134, 52143, 52314, 52431, 53241, 53421, 54132\ea   & 3 \\ \hline
  15243, 34251, 51423, 32415; 43521, 23145 &  4  \\ \hline
\end{array}$$
\caption{The values of $N(\pi)$ for permutations of length up to 5.}\label{tab:Npi}
\end{table}

The formula given to compute for $N(\pi)$ relies on a bijection between $\S_n$ and the set $\T_n$ of cyclic permutations of $[n]$ with a distinguished entry. For example, underlining the distinguished entry, we have $$\T_3=\{\ul{2}31,2\ul{3}1,23\ul{1},\ul{3}12,3\ul{1}2,31\ul{2}\}.$$
Given $\pi=\pi(1)\pi(2)\dots\pi(n)\in\S_n$, let $\hp\in\T_n$ be the permutation
whose cycle decomposition is $(\pi(1),\pi(2),\dots,\pi(n))$, with the entry $\pi(1)$ distinguished. For example, if $\pi=892364157$, then $$\hp=(\ul{8},9,2,3,6,4,1,5,7)=536174\ul{8}92.$$
For $\hp\in\T_n$, let $\des(\hp)$ denote the number of descents of the sequence that we get by deleting the distinguished entry from the one-line notation of $\hp$.
For example, $\des(536174\ul{8}92)=4$. With these definitions, we can now state the aforementioned formula for $N(\pi)$.
\begin{theorem}[\cite{Elishifts}]\label{thm:Npi2} Let $\pi\in\S_n$, and let $\hp$ be defined as above. Then $N(\pi)$ is given by
$$N(\pi)=1+\des(\hp)+\epsilon(\hp),$$
where $$\epsilon(\hp)=\begin{cases}
1 & \mbox{if } \hp(1) \mbox{ is the distinguished entry of $\hp$ and } \hp(2)\in\{1,n\}, \\
0 & \mbox{otherwise.}
\end{cases}$$
\end{theorem}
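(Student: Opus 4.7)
The plan is to translate realizability of $\pi$ by $\Sigma_N$ into a combinatorial question about the first letters of a candidate realizing word, and then to read the answer off the cyclic permutation $\hp$.

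First I would set $\sigma = \pi^{-1}$ and observe that $\pi \in \Al_n(\Sigma_N)$ exactly when there is an infinite word $w = w_1 w_2 \dots$ over $\{0,\dots,N-1\}$ whose shifts $W_i = w_i w_{i+1}\dots$ satisfy $W_{\sigma(1)} < W_{\sigma(2)} < \dots < W_{\sigma(n)}$ lexicographically. Projecting onto first letters, the sequence $w_{\sigma(1)} \le w_{\sigma(2)} \le \dots \le w_{\sigma(n)}$ is weakly increasing and takes values in $\{0,\dots,N-1\}$, so $N(\pi)$ equals one plus the minimum number of strict ascents achievable in such a sequence across all valid choices of $w$.

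Next I would classify each position $k \in \{1,\dots,n-1\}$ as \emph{forced} or \emph{free}. Asking that $w_{\sigma(k)} = w_{\sigma(k+1)}$ forces the tie to be broken at the next step, namely $W_{\sigma(k)+1} < W_{\sigma(k+1)+1}$. When both $\sigma(k),\sigma(k+1) \ne n$, these two shifts belong to $\{W_1,\dots,W_n\}$; using the cyclic structure $(\pi(1),\dots,\pi(n))$ of $\hp$, their ranks are identified as $\hp(k)$ and $\hp(k+1)$, so a strict ascent is forced precisely at a descent of $\hp$. Only the two positions $k = k^*-1$ and $k = k^*$, with $k^* = \pi(n)$ the location of the distinguished entry $\pi(1)$ in the one-line notation of $\hp$, escape this forcing: there the comparison involves the tail shift $W_{n+1}$, whose letters lie outside the data specified by $\pi$. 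Removing the distinguished entry from $\hp$ and counting descents in the resulting length-$(n{-}1)$ sequence records the forced ascents exactly, giving the generic lower bound $N(\pi) \ge 1 + \des(\hp)$.

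Finally, I would pin down the $\epsilon$ correction and construct a realizing word. The subtlety is that $W_{n+1}$, though not constrained by $\pi$, must still be an infinite word over $\{0,\dots,N-1\}$: when the distinguished entry sits at the start of the one-line notation of $\hp$ and its neighbour $\hp(2)$ is extremal in $\{1,n\}$, the inequality between $W_{n+1}$ and an already-extremal target cannot be achieved in the minimum alphabet, and an extra letter is required. This is precisely the case captured by $\epsilon(\hp) = 1$. For the matching upper bound I would build $w$ explicitly: assign the letters $0,1,\dots,\des(\hp)+\epsilon(\hp)$ to $w_{\sigma(1)},\dots,w_{\sigma(n)}$ according to the forced-ascent pattern, and then extend $w_{n+1} w_{n+2} \dots$ so that the remaining strict comparisons among the first $n$ shifts all hold. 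The main obstacle is the case analysis around the distinguished entry: every sub-case must be checked to ensure the constructed tail is both compatible with the small alphabet and does not inadvertently disturb the lexicographic ordering of $W_1,\dots,W_n$.
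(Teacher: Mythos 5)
Your high-level strategy---reduce $N(\pi)$ to the minimum number of strict ascents in the weakly increasing first-letter sequence $w_{\pi^{-1}(1)},\dots,w_{\pi^{-1}(n)}$, read off the forced ascents at interior positions as descents of $\hp$, and handle the two comparisons adjacent to the deleted distinguished entry separately---is the right skeleton and is indeed the mechanism underlying this result in \cite{Elishifts} (the present paper only cites Theorem~\ref{thm:Npi2}, it does not reprove it). The genuine gap is in how you pin down $\epsilon$. You claim the obstruction arises exactly when the distinguished entry is $\hp(1)$ and $\hp(2)\in\{1,n\}$, on the grounds that $W_{n+1}$ cannot be placed relative to an ``extremal target.'' But the two extremes point in opposite directions. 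When $\hp(2)=1$, the tie at $k=1$ requires $W_{n+1}$ to lie strictly \emph{below} the smallest of $W_1,\dots,W_n$, and once the first $n-1$ letters are pinned down (Proposition~\ref{prop:determined}) that cannot be done over $\{0,\dots,N{-}1\}$ without enlarging the alphabet. When $\hp(2)=n$ the target has the \emph{largest} rank, and there is no obstruction at all: $W_{n+1}=0^\infty$ suffices. Concretely, for $\pi=2431$ one has $\hp=\ul{2}413$, so $\hp(1)$ is distinguished and $\hp(2)=4=n$, with $\des(\hp)=1$; your argument predicts an extra letter and $N(\pi)=3$, but $w=01100^\infty$ gives $\Pat(w,\Sigma_2,4)=2431$, so $N(2431)=2$, in agreement with Table~\ref{tab:Npi}. (The theorem as reproduced here appears to carry the same slip; the condition should be $\hp(2)=1$, not $\hp(2)\in\{1,n\}$.)

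You also overlook the symmetric obstruction at the right end of $\hp$, which your scheme never sees. When the distinguished entry is $\hp(n)$ (equivalently $\pi(n)=n$), the special comparison requires $W_{n+1}$ to lie strictly \emph{above} the shift of rank $\hp(n-1)$; when $\hp(n-1)=n$ that shift is the largest of $W_1,\dots,W_n$, and no tail over the nominal alphabet can exceed it---this is precisely the remark after equation~(\ref{eq:boundz}) that if $\pi(n-1)=n-1$ and $\pi(n)=n$ then $z_i\le N-2$. For $\pi=2134$ one has $\hp=314\ul{2}$, $\des(\hp)=1$, yet $N(2134)=3$, whereas your account (and the statement as written) would give $2$. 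The correct correction is $\epsilon(\hp)=1$ exactly when $\hp(1)$ is distinguished with $\hp(2)=1$, or $\hp(n)$ is distinguished with $\hp(n-1)=n$---equivalently, $\pi$ ends in $21$ or in $(n{-}1)\,n$. Both sub-cases need a direction-sensitive argument for the lower bound and a separate construction for the matching upper bound, and neither follows from a generic ``extremal target'' claim.
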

The distribution of the descent sets of cyclic permutations is studied in~\cite{Elides}.
The goal of the present paper is to obtain a formula to compute the analogue of $N(\pi)$ for the more general case of $\beta$-shifts, which we define next.

\subsection{$\beta$-shifts}

These maps are a natural generalization of shift maps, and have been extensively studied in the literature~\cite{Schm,Hof} from a
measure-theoretic perspective. Let us begin by defining their order-isomorphic counterparts on the unit interval, which we call $\beta$-sawtooth maps.
For any real number $\beta>1$, define the $\beta$-sawtooth map
\bce\bt{cccc} $\Saw_\beta:$ & $[0,1)$ & $\longrightarrow$ & $[0,1)$ \\  & $x$ & $\mapsto$ & $\fr{\beta x}$ \et\ece
(see Figure~\ref{fig:sawbeta}). In the rest of the paper we will assume, unless otherwise stated, that $\beta$ is a real number with $\beta>1$. Note that when $\beta$ is an integer we recover the definition of standard sawtooth maps.

\begin{figure}[hbt]
\bce\epsfig{file=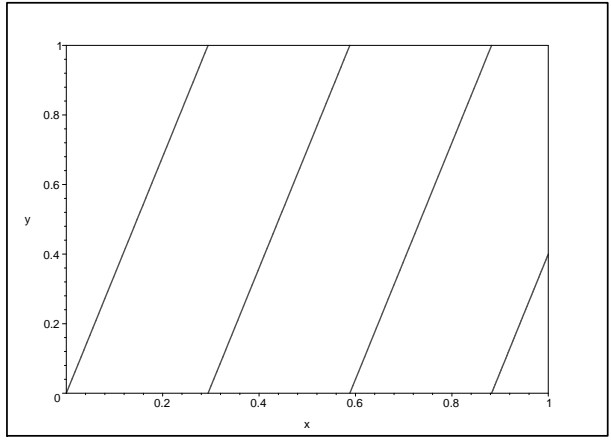,height=8cm,angle=-90} \caption{The $\beta$-sawtooth map $\Saw_\beta(x)$ for $\beta=3.4$.\label{fig:sawbeta}}\ece 
\end{figure}

To describe the corresponding map on infinite words, called the $\beta$-shift, let us first define its domain, $\W(\beta)$.
As shown by R\'enyi~\cite{Ren}, every nonnegative real number $x$ has a $\beta$-expansion
$$x=w_0+\frac{w_1}{\beta}+\frac{w_2}{\beta^2}+\cdots,$$
where
\bea
w_0 &=& \fl{x},\nn\\
w_1 &=& \fl{\beta \fr{x}},\label{eq:x2omega}\\
w_2 &=&  \fl{\beta\fr{\beta \fr{x}}},\nn \\
\dots\nn
\end{eqnarray}
This expansion has the property that $w_i\in\{1,2,\dots,\lceil\beta-1\rceil\}$ for $i\ge1$.
If $0\le x<1$, then $w_0=0$ and $w_i=\fl{\beta \Saw_\beta^{i-1}(x)}$ for $i\ge1$. 

Let $\W_0(\beta)$ be the set of infinite words $w=w_1w_2w_3\dots$ that are obtained in this way as $\beta$-expansions of numbers $x\in[0,1)$.
The lexicographic order (which will be denoted by $<$ throughout the paper) makes $\W_0(\beta)$ into a totally ordered set.
The map $[0,1)\rightarrow\W_0(\beta)$, $x\mapsto w$ is an order-isomorphism. For any $w=w_1w_2w_3\dots\in\W_0(\beta)$, we can recover $x\in[0,1)$ as $$x=\sum_{i\ge1} w_i \beta^{-i}.$$

Of particular interest is the $\beta$-expansion of $\beta$ itself, for which we use the notation
\beq\label{eq:betaexp}\beta=a_0+\frac{a_1}{\beta}+\frac{a_2}{\beta^2}+\cdots.\eeq
One can define $A_0=\beta$, and for $i\ge0$, $a_i=\fl{A_i}$ and $A_{i+1}=\beta(A_{i}-a_{i})$. Then it follows by induction that
\beq\label{eq:Ai} A_i=\beta^{i+1}-a_0\beta^i-a_1\beta^{i-1}-\dots-a_{i-1}\beta. \eeq

If $\beta$ is such that its $\beta$-expansion is finite, i.e., it has only finitely many nonzero terms $a_i$, we let $a_q$ be the last nonzero term of the expansion, so
$$\beta=a_0+\frac{a_1}{\beta}+\frac{a_2}{\beta^2}+\dots+\frac{a_q}{\beta^q},$$
and we let $y=(a_0a_1\dots a_{q-1}(a_q{-}1))^\infty$.
Define
\beq\label{eq:defW}\W(\beta)=\{w_1w_2w_3\ldots : w_kw_{k+1}w_{k+2}\ldots<a_0a_1a_2\ldots \mbox{ for all }k\ge1\}.\eeq
It follows from Parry~\cite{Par} that $\W_0(\beta)$ is precisely the set of words in $\W(\beta)$ that do not end in~$y$.
For example, if $\beta=N\in\mathbb{Z}$, then $\W(N)=\WW_N$, the set of infinite words on the alphabet $\{0,1,\dots,N-1\}$,
whereas $\W_0(N)$ does not include words ending in $(N{-}1)^\infty$. If $\beta=1+\sqrt{2}$, then $a_0a_1a_2\dots=210^\infty$, $\W(\beta)$ is the set of words over $\{0,1,2\}$ where every $2$ is followed by a $0$,
and $\W_0(\beta)$ is the set resulting from removing the words ending in $(20)^\infty$ from $\W(\beta)$. Clearly, if $\beta$ has an infinite $\beta$-expansion, then $\W(\beta)=\W_0(\beta)$.

We define the $\beta$-shift $\Sigma_\beta$ to be the map
\bce\bt{cccc} $\Sigma_\beta:$ & $\W(\beta)$ & $\longrightarrow$ & $\W(\beta)$ \\  & $w_1w_2w_3\dots$ & $\mapsto$ & $w_2w_3w_4\dots$.\et\ece

For $x\in[0,1)$, if $w\in\W_0(\beta)$ is the word given by the $\beta$-expansion of $x$, then $\Sigma_\beta(w)$ is the word given by the $\beta$-expansion of $\Saw_\beta(x)$. In particular, $\Saw_\beta$ and the restriction of $\Sigma_\beta$ to $\W_0(\beta)$ are order-isomorphic.
Besides, this restriction of the domain does not change the set of allowed patterns of $\Sigma_\beta$, and therefore $$\Al(\Sigma_\beta)=\Al(\Saw_\beta).$$

A well-studied problem is the connection between $\beta$-expansions and the ergodic properties of the corresponding $\beta$-shift (see~\cite{Schm} and references therein). In this paper, rather than the measure-theoretic
properties of $\beta$-shifts, we are concerned with their allowed and forbidden patterns.

\section{Two new statistics on words}\label{sec:wordstat}

In this section we define two real-valued statistics on words that will play a key role in studying the allowed patters of $\beta$-shifts. 
These statistics give rise to two weak orderings $\lessv$ and $\lessw$, that are related to but different from the lexicographic order $<$.
For an infinite word $w=w_1w_2\dots$, 
we use the notation $w_{\andon{i}}=w_iw_{i+1}\dots$ for $i\ge1$.

Throughout this section, $u,v,w,z$ denote words in $\WW_N$ for some arbitrary positive integer $N$. We define the series
$$f_w(\beta)=\frac{w_1}{\beta}+\frac{w_2}{\beta^2}+\dots+\frac{w_n}{\beta^n}+\cdots.$$
This series is convergent for $\beta>1$, and in this interval,
$$f_w'(\beta)=-\frac{w_1}{\beta^2}-\frac{2w_2}{\beta^3}-\dots<0,$$
assuming that $w\neq0^\infty$.
Since $\lim_{\beta\rightarrow\infty} f_w(\beta)=0$, it follows that there is a unique solution to $f_w(\beta)=1$ satisfying $\beta\ge1$.
Such value of $\beta$ will be denoted by $\BV(w)$. We define $\BV(0^\infty)=0$ by convention. Additionally, let $$\BW(w)=\sup_{i\ge1} \BV(w_{\andon{i}}).$$
Note that $\BV(w)\le \BW(w)\le N$.
The statistics $\BV$ and $\BW$ naturally define the following two weak orders on $\WW_n$.

\begin{definition}
We write \bit
\item $v\lessv w$\ if \ $\BV(v)<\BV(w)$,
\item $v\lessw w$\ if \ $\BW(v)<\BW(w)$.
\eit
\end{definition}

It follows from the definition of $\BV$ that the condition $\BV(v)<\BV(w)$ is equivalent to $f_v(\BV(w))<1$, and also to $f_w(\BV(v))>1$
(assuming $v\neq0^\infty$), since $f_v$ and $f_w$ are decreasing functions.
To denote the corresponding total preorders, we will write $v\lesseqv w$ if $\BV(v)\le\BV(w)$, and $v\lesseqw w$ if $\BW(v)\le\BW(w)$.
Recall that a total preorder is a binary relation that is reflexive, transitive, and has no incomparable pairs.

Note that $v<w$ lexicographically if and only if there is some $C$ such that $f_v(\beta)<f_w(\beta)$ for all $\beta>C$.
 The order relations $<,\lessv,\lessw$ are independent, in the sense that it is possible to find
a pair of elements for each predetermined combination of order relationships between them. For example, if $$u=21230^\infty, \ v=212310^\infty, \ w=2130010^\infty, \ z=3020^\infty,$$
we have $u<v<w<z$, $w\lessv u \lessv v \lessv z$, and $u\lessw w\lessw z\lessw v$.
Here are the values of $\BV$ and $\BW$ for these words:
$$\begin{array}{l|c|c}
& \BV & \BW \\ \hline
u=21230^\infty & 2.765123689\ldots & 3\hfill\\ \hline
v=212310^\infty & 2.776562146\ldots & 3.302775638\ldots \\ \hline
w=2130010^\infty & 2.761672412\ldots & 3.035744112\ldots \\ \hline
z=3020^\infty & 3.195823345\ldots & 3.195823345\ldots
\end{array}$$

In some cases, however, the are connections among the different orders, as shown in the lemmas below. Recall that $u,v,w,z$ denote words in $\WW_N$.

\begin{lemma}\label{lem:wi+1}
Let $i\ge1$. Then
$$w_{\andon{i+1}}\lesseqv w \ \Longleftrightarrow \  w \lesseqv w_1w_2\dots w_{i-1}(w_i{+}1).$$
The statement also holds substituting $\lessv$ for $\lesseqv$ on both sides.
\end{lemma}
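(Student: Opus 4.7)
The plan is to reduce both inequivalences to a single algebraic identity relating $f_w$, $f_{w_{\to i+1}}$, and $f_u$, where $u=w_1w_2\cdots w_{i-1}(w_i{+}1)0^\infty$, evaluated at $\beta=\BV(w)$.

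First I would split the defining series for $f_w(\beta)$ at position $i$:
$$f_w(\beta)=\sum_{k=1}^{i}\frac{w_k}{\beta^k}+\frac{1}{\beta^i}f_{w_{\to i+1}}(\beta),$$
and compute the corresponding series for $u$ (a finite word, viewed as padded with zeros):
$$f_u(\beta)=\sum_{k=1}^{i-1}\frac{w_k}{\beta^k}+\frac{w_i+1}{\beta^i}=\sum_{k=1}^{i}\frac{w_k}{\beta^k}+\frac{1}{\beta^i}.$$
Subtracting gives the key identity valid for every $\beta>1$:
$$f_u(\beta)-f_w(\beta)=\frac{1}{\beta^i}\bigl(1-f_{w_{\to i+1}}(\beta)\bigr).$$
Hence, for any $\beta>1$, the sign of $1-f_{w_{\to i+1}}(\beta)$ coincides with that of $f_u(\beta)-f_w(\beta)$, both for weak and strict inequality.

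Next I specialize to $\beta=\BV(w)$, so that $f_w(\beta)=1$ (the case $w=0^\infty$ is vacuous since then $\BV(w)=0$ and the hypothesis $w_i+1\ge 1$ makes the right-hand side trivially hold). At this $\beta$, the identity becomes
$$f_u(\BV(w))-1=\frac{1}{\BV(w)^i}\bigl(1-f_{w_{\to i+1}}(\BV(w))\bigr),$$
so $f_{w_{\to i+1}}(\BV(w))\le 1$ if and only if $f_u(\BV(w))\ge 1$, with the same equivalence for strict inequalities.

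Finally I translate each side back into the $\lesseqv$/$\lessv$ language using the characterization recalled just before the lemma: because $f_v$ is (strictly) decreasing on $(1,\infty)$ and takes the value $1$ at $\BV(v)$, one has $\BV(v)\le\BV(w)$ iff $f_v(\BV(w))\le 1$, and similarly for strict inequality. Applied with $v=w_{\to i+1}$, the left side $f_{w_{\to i+1}}(\BV(w))\le 1$ becomes $w_{\to i+1}\lesseqv w$. Applied with the roles reversed, so that we read $f_u(\BV(w))\ge 1$ as $f_w(\BV(u))\le 1$ (equivalently $\BV(w)\le\BV(u)$), the right side becomes $w\lesseqv u$. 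The strict version follows by the same translation. I do not expect any serious obstacle; the only thing to watch is the direction-flip in these monotonicity arguments and the trivial case $w=0^\infty$.
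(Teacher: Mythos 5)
Your proof is correct and takes essentially the same approach as the paper: both work at $\beta=\BV(w)$ and use the decomposition $f_w(\beta)=\sum_{k=1}^i w_k\beta^{-k}+\beta^{-i}f_{w_{\andon{i+1}}}(\beta)$ to turn the condition $f_{w_{\andon{i+1}}}(\beta)\le1$ into $f_{w_1\dots w_{i-1}(w_i+1)}(\beta)\ge 1$; your presentation merely packages this as a stand-alone identity before specializing. The only nit is your remark about $w=0^\infty$: in that degenerate case the $\lessv$ version actually fails (the left side is false but the right side is true, since $\BV(0^\infty)=0<1=\BV(0^{i-1}10^\infty)$), so it is not ``trivially'' handled; however, the paper also does not address it, and the case never arises in the lemma's applications.
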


\begin{proof}
Let $\beta=\BV(w)$.
The condition $w_{\andon{i+1}}\lesseqv w$ is equivalent to $f_{w_{\andon{i+1}}}(\beta)\le1$, that is
$$\frac{w_{i+1}}{\beta}+\frac{w_{i+2}}{\beta^2}+\cdots\le1.$$ Dividing by $\beta^i$ and adding $\frac{w_1}{\beta}+\dots+\frac{w_i}{\beta^i}$ to both sides, this inequality becomes
$$1=f_w(\beta)=\frac{w_{1}}{\beta}+\dots+\frac{w_{i}}{\beta^i}+\frac{w_{i+1}}{\beta^{i+1}}+\dots\le\frac{w_1}{\beta}+\dots+\frac{w_i}{\beta^i}+\frac{w_i+1}{\beta^{i+1}}=f_{w_1w_2\dots w_{i-1}(w_i{+}1)}(\beta),$$
which is equivalent to $w\lesseqv w_1w_2\dots w_{i-1}(w_i{+}1)$.

The corresponding statement for $\lessv$ is proved analogously, substituting $<$ for $\le$.
\end{proof}

\begin{lemma}\label{lem:compare}
Suppose that $v\lessv w$ and $v>w$, and let $i$ be the first position where the two words differ. Then $w\lessv w_{\andon{i+1}}$.\\
The statement also holds replacing the two occurrences of $\lessv$ with $\lesseqv$.
\end{lemma}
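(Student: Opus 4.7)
The plan is to reduce the claim to Lemma~\ref{lem:wi+1} by comparing $f_v$ and a certain truncated version of $f_w$ evaluated at $\beta=\BV(w)$. The guiding observation, already noted in the text, is that $v\lessv w$ is equivalent to $f_v(\beta)<1$, and analogously $\lesseqv$ corresponds to $f_v(\beta)\le 1$. So the whole proof will be a one-line inequality between series, followed by an appeal to Lemma~\ref{lem:wi+1}.

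Set $\beta=\BV(w)$. I would split the series $f_v(\beta)$ around position $i$. Since $v_j=w_j$ for $j<i$ and $v_i\ge w_i+1$ (as $v>w$ lexicographically with first disagreement at position~$i$), and since all remaining terms $v_j/\beta^j$ are nonnegative,
$$f_v(\beta)=\sum_{j<i}\frac{w_j}{\beta^j}+\frac{v_i}{\beta^i}+\sum_{j>i}\frac{v_j}{\beta^j}\ \ge\ \sum_{j<i}\frac{w_j}{\beta^j}+\frac{w_i+1}{\beta^i}\ =\ f_{w_1w_2\dots w_{i-1}(w_i+1)0^\infty}(\beta).$$
Combining this with $f_v(\beta)<1$ (coming from the hypothesis $v\lessv w$) yields $f_{w_1\dots w_{i-1}(w_i+1)0^\infty}(\beta)<1$, which is exactly the statement $w_1w_2\dots w_{i-1}(w_i+1)\lessv w$.

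At this point Lemma~\ref{lem:wi+1} (the strict version) finishes the job: it says
$$w_1w_2\dots w_{i-1}(w_i+1)\lessv w\ \Longleftrightarrow\ w\lessv w_{\andon{i+1}},$$
which is the desired conclusion. For the $\lesseqv$ variant, the exact same chain of inequalities works, replacing each strict $<$ by $\le$ (the hypothesis $v\lesseqv w$ giving $f_v(\beta)\le 1$) and invoking the non-strict form of Lemma~\ref{lem:wi+1}.

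I do not anticipate a serious obstacle here; the only subtlety is noticing that the finite word $w_1\dots w_{i-1}(w_i+1)$ appearing in Lemma~\ref{lem:wi+1} is precisely the lower bound one gets for $f_v(\beta)$ after using $v_i\ge w_i+1$ and discarding the nonnegative tail. Once that alignment is spotted, the argument is a two-step estimate plus a direct application of the preceding lemma.
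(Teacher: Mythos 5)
Your proof is correct and follows essentially the same approach as the paper's, which likewise combines the letter-by-letter comparison between $v$ and $w_1\dots w_{i-1}(w_i{+}1)$ with an application of Lemma~\ref{lem:wi+1}; you present it as a direct series estimate at $\beta=\BV(w)$ while the paper wraps the identical reasoning in a short proof by contradiction. One small slip: the equivalence you attribute to the strict version of Lemma~\ref{lem:wi+1} swaps the roles of $w_{\andon{i+1}}$ and $w_1\dots w_{i-1}(w_i{+}1)$; what you are really invoking is the contrapositive of the \emph{non-strict} form of that lemma (and, dually, for the $\lesseqv$ case, the contrapositive of the strict form), so the ``strict''/``non-strict'' labels should be interchanged — this does not break the argument, since both forms are supplied by Lemma~\ref{lem:wi+1}.
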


\begin{proof}
Since $w_j=v_j$ for $1\le j<i$ and $w_i<v_i$, each letter in $w_1w_2\dots w_{i-1}(w_i{+}1)$ is no greater than the corresponding letter in $v$, so $w_1w_2\dots w_{i-1}(w_i{+}1)\lesseqv v$.
Now suppose for contradiction that $w_{\andon{i+1}}\lesseqv w$. By Lemma~\ref{lem:wi+1}, we have $w \lesseqv w_1w_2\dots w_{i-1}(w_i{+}1)\lesseqv v$, contradicting that $v\lessv w$.

To prove the analogous statement for $\lesseqv$, we suppose that $w_{\andon{i+1}}\lessv w$, and then Lemma~\ref{lem:wi+1} implies that $w \lessv w_1w_2\dots w_{i-1}(w_i{+}1)\lesseqv v$, contradicting that $v\lesseqv w$.\
\end{proof}

\begin{lemma}\label{lem:z}
Suppose that $w_{\andon{j}}\le z$ for all $j$. Then $w_{\andon{j}}\lesseqv z$ for all $j$.
\end{lemma}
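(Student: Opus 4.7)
The plan is to set $\beta=\BV(z)$ and reformulate the conclusion analytically: since $f_{w_{\andon{j}}}$ is strictly decreasing with $f_{w_{\andon{j}}}(\BV(w_{\andon{j}}))=1$, the desired inequality $w_{\andon{j}}\lesseqv z$ is equivalent to $s_j:=f_{w_{\andon{j}}}(\beta)\le 1$. I would argue by contradiction, assuming $s_{j_0}>1$ for some $j_0\ge 1$. Because $f_z(\beta)=1$, this excludes $w_{\andon{j_0}}=z$, so the lexicographic hypothesis $w_{\andon{j_0}}\le z$ forces $w_{\andon{j_0}}<z$ strictly, and there is a well-defined first disagreement at some position $i_0\ge 1$ where $w_{j_0+i_0-1}\le z_{i_0}-1$.

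The main computation is to subtract $f_z(\beta)=1$ from $s_{j_0}$ term by term: the first $i_0-1$ differences vanish, the $i_0$-th contributes at most $-\beta^{-i_0}$, and the tail collapses to $\beta^{-i_0}\bigl(s_{j_0+i_0}-f_{z_{\andon{i_0+1}}}(\beta)\bigr)$. Using $f_{z_{\andon{i_0+1}}}(\beta)\ge 0$ and rearranging produces the geometric-growth estimate
$$s_{j_0+i_0}-1\ \ge\ \beta^{i_0}\,(s_{j_0}-1),$$
which is the heart of the argument. I expect this step to be the main obstacle, because the careful bookkeeping that turns the single-digit lexicographic gap at position $i_0$ into the amplification factor $\beta^{i_0}$ is what ultimately drives the contradiction; the qualitative Lemma~\ref{lem:compare} alone would only give strict increase, not growth.

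Since $s_{j_0+i_0}>1$ as well, I would iterate: setting $j_1:=j_0+i_0$ and applying the same argument produces a strictly increasing sequence $j_0<j_1<j_2<\cdots$ with $s_{j_k}-1\ge\beta^{j_k-j_0}(s_{j_0}-1)$. When $\beta>1$ this forces $s_{j_k}\to\infty$, contradicting the uniform bound $s_j\le (N-1)/(\beta-1)$ that follows from $w_k\le N-1$. The boundary case $\beta=1$ only arises when $z$ has a single nonzero entry equal to $1$, and the hypothesis then forces $w$ itself to be of the form $0^\infty$ or $0^m10^\infty$, for each of which $\BV(w_{\andon{j}})\le 1$ is immediate.
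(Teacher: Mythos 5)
Your argument is correct and follows essentially the same route as the paper's proof: fix $\beta=\BV(z)$, set $s_j=f_{w_{\andon{j}}}(\beta)$, compare $w_{\andon{j}}$ to $z$ at the first disagreement to get the amplification inequality $s_{j+k}-1\ge\beta^{k}(s_j-1)$ (the paper derives it by bounding the partial sum of $f_z(\beta)=1$ from above by $1$, you by subtracting the series termwise, which is the same computation), then iterate and contradict the uniform bound $s_j\le(N-1)/(\beta-1)$. The only cosmetic difference is that you spell out the $\beta=1$ degenerate case that the paper dismisses as trivial.
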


\begin{proof}
Let $\beta=\BV(z)$. The case $\beta=1$ is trivial, so we assume that $\beta>1$.
For each $j$, let $$s_j=\frac{w_{j}}{\beta}+\frac{w_{j+1}}{\beta^{2}}+\cdots.$$
If $j$ is such that $w_{\andon{j}}<z$, let $k_j-1\ge0$ be the length of the initial segment in which $w_{\andon{j}}$ an $z$ agree, and let $x_j$ be the first letter of $w_{\andon{j}}$ where they disagree.
In other words, $w_{\andon{j}}=z_1\dots z_{k_j-1}x_j w_{\andon{j+k_j}}$, with $x_j\le z_{k_j}-1$.
We then have
\beq\label{eq:sj}s_j\le\frac{z_1}{\beta}+\dots+\frac{z_{k_j-1}}{\beta^{k_j-1}}+\frac{z_{k_j}-1}{\beta^{k_j}}+\frac{1}{\beta^{k_j}}s_{j+k_j}\le 1-\frac{1}{\beta^{k_j}}+\frac{1}{\beta^{k_j}}s_{j+k_j},\eeq
using that $\frac{z_1}{\beta}+\frac{z_2}{\beta^2}+\dots=1$ and thus its partial sums are no greater than $1$.

To prove the lemma, suppose for contradiction that there is $j$ such that $\BV(w_{\andon{j}})>\beta$, or equivalently, $s_j>1$. Let $\epsilon=s_j-1>0$.
By inequality~(\ref{eq:sj}), $s_{j+k_j}-1\ge \beta^{k_j}(s_j-1)$. In particular, $s_{j+k_j}>1$. Repeating the same argument,
and setting $j_0=j$ and $j_{i+1}=j_i+k_{j_i}$ for $i\ge0$, we get
$$s_{j_{i+1}}-1\ge \beta^{k_{j_i}}(s_{j_i}-1)\ge\dots\ge \beta^{k_{j_i}+\dots+k_{j_0}}(s_{j_0}-1)\ge \beta^{i+1}\epsilon,$$ since $k_{j_i}\ge1$ for all $i$.
This implies that $s_{j_i}$ can be arbitrarily big for large enough $i$, which contradicts the fact that
$$s_{j_i}\le \frac{N-1}{\beta}+\frac{N-1}{\beta^2}+\dots= \frac{N-1}{\beta-1}.$$
\end{proof}

The converse of Lemma~\ref{lem:z} does not hold in general. For example, if the first entry of $z$ is $0$, then the condition $w_{\andon{j}}\le z$ for all $j$ forces $w=0^\infty$, but there may be other words with $w_{\andon{j}}\lesseqv z$ for all $j$. Nevertheless, a stronger result holds when $z$ is the $\beta$-expansion of $\beta$ for some $\beta>1$.

\begin{lemma}\label{lem:a}\renewcommand{\labelenumi}{(\roman{enumi})}
Let $a=a_0a_1a_2\dots$ be the $\beta$-expansion of $\beta$, for some $\beta>1$.
\ben
\item If $v\lesseqv a$, then $v\le a$;
\item if $v\lessv a$, then $v<a$;
\item for every $k\ge1$, $a_{\andon{k}}\lessv a$ and $a_{\andon{k}}<a$.
\een
\end{lemma}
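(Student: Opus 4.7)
The plan is to prove the three statements in the order (iii$'$), (i), (ii), (iii$''$), where (iii$'$) and (iii$''$) denote, respectively, the $\lessv$-assertion and the lexicographic assertion of part~(iii). The crucial observation is that Lemma~\ref{lem:compare} converts a (hypothetical) lex-violation $v>a$ into a $\lessv$-comparison between $a$ and one of its shifts $a_{\andon{i+1}}$, so we need to know in advance that every such shift is $\lessv$-smaller than $a$. That control is exactly (iii$'$), which therefore has to be established first, independently of the other parts, directly from the $\beta$-expansion recursion.

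For (iii$'$), dividing equation~(\ref{eq:betaexp}) by $\beta$ shows $f_a(\beta)=1$, so $\BV(a)=\beta$. The auxiliary quantities $A_i$ defined just after equation~(\ref{eq:betaexp}) satisfy $A_0=\beta$ and $A_{i+1}=\beta(A_i-a_i)=\beta\fr{A_i}$, which forces $A_k\in[0,\beta)$ for every $k\ge1$. Unrolling the recursion yields $A_k=a_k+a_{k+1}/\beta+a_{k+2}/\beta^2+\cdots$, so dividing by $\beta$ gives $f_{a_{\andon{k}}}(\beta)=A_k/\beta<1=f_a(\beta)$. Since $f_v$ is strictly decreasing on $(1,\infty)$ whenever $v\neq 0^\infty$, this forces $\BV(a_{\andon{k}})<\beta=\BV(a)$, i.e., $a_{\andon{k}}\lessv a$ (the degenerate case $a_{\andon{k}}=0^\infty$ is immediate from the convention $\BV(0^\infty)=0$).

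For (i), I argue by contradiction. Assume $v\lesseqv a$ but $v>a$ lexicographically, and let $i$ be the first position where the two words differ. The $\lesseqv$-version of Lemma~\ref{lem:compare}, applied with $w=a$, produces $a\lesseqv a_{\andon{i+1}}$, while (iii$'$) gives $a_{\andon{i+1}}\lessv a$; chaining these two comparisons yields $\BV(a)\le\BV(a_{\andon{i+1}})<\BV(a)$, a contradiction. Hence $v\le a$. Part~(ii) follows immediately: $v\lessv a$ implies $v\lesseqv a$, so $v\le a$ by~(i), and $v=a$ is ruled out because it would force $\BV(v)=\BV(a)$. Finally, (iii$''$) is obtained by applying~(ii) with $v=a_{\andon{k}}$ and invoking (iii$'$). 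The main obstacle is precisely the circularity threatened by the fact that Lemma~\ref{lem:compare} feeds the proof of~(i) with a comparison that already involves a shift of~$a$; separating (iii) into its $\lessv$-part (proved first, directly from the $A_k$-recursion without any lex input) and its lex-part (deduced at the end from~(ii)) is what breaks the loop.
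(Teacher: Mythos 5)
Your proposal is correct and follows essentially the same route as the paper's proof: the paper first establishes the inequality $f_{a_{\andon{i+1}}}(\beta)=A_i-a_i<1$ (equation~(\ref{eq:fai1})), which is exactly your step (iii$'$) phrased via $A_i-a_i$ rather than $A_k/\beta$, and then derives (i) by contradiction using Lemma~\ref{lem:compare}, deduces (ii) by excluding $v=a$, and obtains the lexicographic half of (iii) by feeding (iii$'$) into (ii). Your remark about the potential circularity and the need to prove (iii$'$) first is a fair reading of why the paper isolates (\ref{eq:fai1}) at the start, but it is the same proof, not a new one.
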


\begin{proof}
Equations~(\ref{eq:betaexp}) and~(\ref{eq:Ai}) imply that for any $i\ge0$,
\beq\label{eq:fai1} f_{a_{\andon{i+1}}}(\beta)=\beta^i(\beta-a_0-\frac{a_1}{\beta}-\dots-\frac{a_i}{\beta^i})=\beta^{i+1}-a_0\beta^i-a_1\beta^{i-1}-\dots-a_{i-1}\beta-a_i=A_i-a_i<1,\eeq
since $a_i=\fl{A_i}$.

To prove part~(i), suppose for contradiction that $v\lesseqv a$ and $a<v$. By Lemma~\ref{lem:compare}, we have that $a\lesseqv a_{\andon{i+1}}$, where $a_i$ is the first entry in $a$ that differs from the corresponding entry in $v$.
But then $f_{a_{\andon{i+1}}}(\beta)\ge1$, which contradicts equation~(\ref{eq:fai1}).
In part~(ii), the condition $v\lessv a$ eliminates the possibility of $v=a$, so we have $v<a$ in this case.
Finally, equation~(\ref{eq:fai1}) and the fact that $\BV(a)=\beta$ imply that $a_{\andon{k}}\lessv a$ for all $k\ge1$, so part~(iii) follows now from part~(ii).
\end{proof}

\section{The shift-complexity of a permutation}\label{sec:shiftcomplexity}

In this section we establish some properties of the domain $W(\beta)$ of the $\beta$-shift, and we define a real-valued statistic on permutations, which we call the shift-complexity.

\begin{proposition}\label{prop:incl} Let $1 < \beta \le \beta'$. Then $$\W(\beta) \subseteq \W(\beta').$$
\end{proposition}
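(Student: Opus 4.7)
My plan is to reduce the inclusion $\W(\beta)\subseteq\W(\beta')$ to the lexicographic inequality $a\le a'$ between the two base expansions, and then to obtain that inequality as a one-line consequence of Lemma~\ref{lem:a}.

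Let $a=a_0a_1a_2\dots$ and $a'=a'_0a'_1a'_2\dots$ denote the $\beta$- and $\beta'$-expansions of $\beta$ and $\beta'$, respectively, regarded as words in a common $\WW_N$ for any integer $N>\beta'$. The defining identities $f_a(\beta)=1$ and $f_{a'}(\beta')=1$ give $\BV(a)=\beta$ and $\BV(a')=\beta'$. From $\beta\le\beta'$ we then obtain $a\lesseqv a'$, and Lemma~\ref{lem:a}(i), applied with $a'$ playing the role of the lemma's ``$a$'' (and with $v=a$), yields $a\le a'$ lexicographically.

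With this comparison in hand, the inclusion is immediate: for any $w\in\W(\beta)$, the defining property gives $w_{\andon{k}}<a$ for every $k\ge1$, and chaining with $a\le a'$ yields $w_{\andon{k}}<a'$ for every $k\ge1$, which is exactly the condition that $w\in\W(\beta')$. I do not foresee any genuine obstacle; the substantive work is already packaged into Lemma~\ref{lem:a}(i), and the only mild point to watch is that $a$ and $a'$ be compared over a common alphabet, which is handled by the choice of $N$ above.
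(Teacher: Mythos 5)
Your proof is correct, but it takes a different route from the paper's own argument. The paper proves $a\le a'$ directly and elementarily: setting $A_0=\beta$, $A_0'=\beta'$, and using the recursions $A_{j+1}=\beta(A_j-a_j)$, $A_{j+1}'=\beta'(A_j'-a_j')$, it shows by induction that $A_j\le A_j'$ up to the first index $i$ where $a$ and $a'$ differ (the earlier digits agree, so the floors cancel), and then concludes $a_i=\fl{A_i}\le\fl{A_i'}=a_i'$. You instead observe that $f_a(\beta)=1$ and $f_{a'}(\beta')=1$ give $\BV(a)=\beta$ and $\BV(a')=\beta'$, so $\beta\le\beta'$ is literally the statement $a\lesseqv a'$, and then Lemma~\ref{lem:a}(i), applied with $a'$ in the role of that lemma's $a$ and $v=a$, upgrades this to the lexicographic inequality $a\le a'$. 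Both routes are sound and there is no circularity (Lemma~\ref{lem:a} does not depend on Proposition~\ref{prop:incl}); your version is shorter because the real work was already done in Lemma~\ref{lem:a}, whereas the paper's version is self-contained and only needs the $A_i$ recursion. In fact, the paper itself remarks just after Proposition~\ref{prop:WBW} that Proposition~\ref{prop:incl} also follows from the word-statistics machinery, so your proof is essentially a direct realization of that remark, bypassing Proposition~\ref{prop:WBW} and going straight to Lemma~\ref{lem:a}(i). The one point worth verifying explicitly, which you did handle, is that $\BV(a)=\beta$: this follows from $f_a(\beta)=\frac{1}{\beta}\bigl(a_0+\frac{a_1}{\beta}+\cdots\bigr)=\frac{\beta}{\beta}=1$ together with the strict monotonicity of $f_a$.
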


\begin{proof}
Let $a=a_0a_1\dots$ be the $\beta$-expansion of $\beta$, and let $a'=a_0'a_1'\dots$ be the $\beta'$-expansion of~$\beta'$.
By the definition in equation~(\ref{eq:defW}), it is enough to show that $a\le a'$. Let $A_i$ be as defined in equation~(\ref{eq:Ai}), and let $A_i'$ be defined analogously for $\beta'$.
Suppose that the first entry where $a$ and $a'$ differ is $a_i\neq a_i'$. We claim that $A_j\le A_j'$ for $0\le j\le i$. This follows by induction since $A_0=\beta\le\beta'=A_0'$,
and if $A_j\le A_j'$ for some $j<i$, then $\fl{A_j}=a_j=a_j'=\fl{A_j'}$ implies that $A_j-a_j\le A_j'-a_j'$, so $A_{j+1}=\beta(A_j-a_j)\le \beta'(A_j'-a_j')=A_{j+1}'$.
But then $a_i=\fl{A_i}\le\fl{A_i'}=a_i'$, so $a_i<a_i'$ and we are done.
\end{proof}

The set $\W(\beta)$ is related to the set of words $w$ for which $\BW(w)$ is bounded by $\beta$.
The following result gives the precise relationship. 

\begin{proposition}\label{prop:WBW}
Let $\beta>1$, and let $a=a_0a_1\dots$ be the $\beta$-expansion of $\beta$. We have
\bea\label{eq:part1}
& \{w : \BW(w)<\beta\} \subseteq \W(\beta) \subseteq \{w : \BW(w)\le\beta\}, \\
\label{eq:part2}& \W(\beta)=\{w : \BW(w)\le \beta \mbox{ and } w_{\andon{k}}\neq a \ \ \forall k\},\\
\label{eq:part3}& \BW(w)=\inf\{\beta : w\in\W(\beta)\}.
\eea
\end{proposition}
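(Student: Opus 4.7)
The proof will rest on Lemmas~\ref{lem:z} and~\ref{lem:a}, which provide the bridge between the lexicographic order on words and the preorder $\lesseqv$. The plan is to prove~(\ref{eq:part1}) first, then deduce~(\ref{eq:part2}) from it with a small refinement, and finally read off~(\ref{eq:part3}) as a direct consequence.

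For the left inclusion in~(\ref{eq:part1}), suppose $\BW(w)<\beta$. Then $\BV(w_{\andon{k}})<\beta=\BV(a)$ for every $k\ge1$, which is exactly $w_{\andon{k}}\lessv a$. By Lemma~\ref{lem:a}(ii) this upgrades to $w_{\andon{k}}<a$ lexicographically for every $k$, so $w\in\W(\beta)$ by the definition~(\ref{eq:defW}). For the right inclusion, suppose $w\in\W(\beta)$. Then $w_{\andon{k}}<a\le a$ for every $k$, so Lemma~\ref{lem:z} (with $z=a$) yields $w_{\andon{k}}\lesseqv a$, i.e.\ $\BV(w_{\andon{k}})\le\beta$ for every $k$. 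Taking the supremum gives $\BW(w)\le\beta$.

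For~(\ref{eq:part2}), the inclusion $\subseteq$ is immediate: if $w\in\W(\beta)$ then $w_{\andon{k}}<a$ (in particular $w_{\andon{k}}\neq a$) for every $k$, and $\BW(w)\le\beta$ by the right inclusion of~(\ref{eq:part1}). For the reverse inclusion, assume $\BW(w)\le\beta$ and $w_{\andon{k}}\neq a$ for all $k$. From $\BV(w_{\andon{k}})\le\beta=\BV(a)$ we get $w_{\andon{k}}\lesseqv a$, and Lemma~\ref{lem:a}(i) then gives $w_{\andon{k}}\le a$. Combined with $w_{\andon{k}}\neq a$, this forces the strict inequality $w_{\andon{k}}<a$ for every $k$, so $w\in\W(\beta)$.

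Finally,~(\ref{eq:part3}) follows directly from~(\ref{eq:part1}). The right inclusion in~(\ref{eq:part1}) shows that every $\beta$ in the set $\{\beta:w\in\W(\beta)\}$ satisfies $\beta\ge\BW(w)$, so $\BW(w)\le\inf\{\beta:w\in\W(\beta)\}$. Conversely, the left inclusion shows that any $\beta>\BW(w)$ belongs to that set, so the infimum is at most $\BW(w)$. Combining the two bounds yields equality.

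The only step requiring any care is the right inclusion in~(\ref{eq:part1}), since one must invoke Lemma~\ref{lem:z} rather than just Lemma~\ref{lem:a}(i); the latter is tailored to a single word $v$ compared against $a$, whereas here the hypothesis $w\in\W(\beta)$ gives a family of strict inequalities $w_{\andon{k}}<a$ that must be converted uniformly into bounds on $\BV(w_{\andon{k}})$. This is precisely the content of Lemma~\ref{lem:z}, so no additional obstacle is anticipated.
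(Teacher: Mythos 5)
Your proof is correct and follows essentially the same route as the paper: the left inclusion of~(\ref{eq:part1}) via Lemma~\ref{lem:a}(ii), the right inclusion via Lemma~\ref{lem:z}, the refinement to~(\ref{eq:part2}) via Lemma~\ref{lem:a}(i), and~(\ref{eq:part3}) read off directly from~(\ref{eq:part1}). The only cosmetic difference is that the paper packages the two inclusions and~(\ref{eq:part2}) into one chain of implications, whereas you separate them, but the lemmas invoked and the logical structure are identical.
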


\begin{proof}
Since $f_a(\beta)=1$, we have that $\BV(a)=\beta$.
The inclusions~(\ref{eq:part1}) are proved by the following sequence of implications:
\begin{multline*}\BW(w)<\beta \ \Rightarrow \ \BV(w_{\andon{k}})<\beta=\BV(a) \ \ \forall k \ \Leftrightarrow \ w_{\andon{k}}\lessv a \ \ \forall k\ \underset{\mbox{Lemma~\ref{lem:a}(ii)}}{\Longrightarrow} \ w_{\andon{k}}<a \ \ \forall k \ \Leftrightarrow \ w\in\W(\beta) \\
\Rightarrow \ w_{\andon{k}}\le a \ \ \forall k \ \underset{\mbox{Lemmas~\ref{lem:z},~\ref{lem:a}(i)}}{\Longleftrightarrow} \ w_{\andon{k}}\lesseqv a \ \ \forall k\
\Leftrightarrow \ \BV(w_{\andon{k}})\le\beta=\BV(a) \ \ \forall k\ \Leftrightarrow \ \BW(w)\le\beta.
\end{multline*}
From the above argument we also see that the words $w$ with $\BW(w)\le\beta$ that are not in $\W(\beta)$ are precisely those with $w_{\andon{k}}=a$ for some $k$, proving~(\ref{eq:part2}).
Finally, equation~(\ref{eq:part3}) follows immediately from~(\ref{eq:part1}).
\end{proof}

It is not hard to see that the two inclusions in~(\ref{eq:part1}) are always strict.
For example, when $\beta=N\ge2$ is an integer, we have seen that $\W(N)=\WW_N$;
in this case $\{w : \BW(w)\le N\}$ also contains the words of the form $w_1\dots w_r N0^\infty$, with $w_i\le N-1$ for $1\le i\le r$,
and $\{w : \BW(w)< N\}$ does not contain words ending in $(N{-}1)^\infty$ nor other words such as $(N{-}1)0(N{-}1)^20(N{-}1)^30(N{-}1)^4\dots$.
For $\beta=1+\sqrt{2}$, $\W(\beta)$ is the set of words over $\{0,1,2\}$ where every $2$ is followed by a $0$, and $\{w : \BW(w)\le \beta\}$ additionally contains
the words of the form $w_1\dots w_r 210^\infty$, where every $2$ in $w_1\dots w_r$ is followed by a $0$.

Note that the statement of Proposition~\ref{prop:incl} also follows from Proposition~\ref{prop:WBW}.
Because of Proposition~\ref{prop:incl}, it is clear from the definition of $\beta$-shifts that for $\beta<\beta'$, the restriction of $\Sigma_{\beta'}$ to $\W(\beta)$ is equal to $\Sigma_{\beta}$.
An immediate consequence of this is the following corollary.
In the rest of the paper, we will write $\Sigma$ instead of $\Sigma_\beta$ when it creates no confusion.

\begin{corollary}\label{cor:inclal} If $1< \beta \leq \beta'$, then $$\Al(\Sigma_\beta) \subseteq \Al(\Sigma_{\beta'}).$$ \end{corollary}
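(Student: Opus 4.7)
The plan is to leverage Proposition~\ref{prop:incl} together with the observation (made in the paragraph immediately preceding the corollary) that the restriction of $\Sigma_{\beta'}$ to $\W(\beta)$ coincides with $\Sigma_{\beta}$. Once these two facts are in hand, the containment of allowed pattern sets is essentially a direct transfer of witnesses from the smaller system to the larger one.

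Concretely, I would start with an arbitrary $\pi \in \Al_n(\Sigma_\beta)$ and unpack the definition: there exists a word $w \in \W(\beta)$ such that $\Pat(w,\Sigma_\beta,n) = \pi$, meaning that $\red(w,\Sigma_\beta(w),\dots,\Sigma_\beta^{n-1}(w)) = \pi$ with respect to the lexicographic order on $\WW_N$. By Proposition~\ref{prop:incl}, we have $w \in \W(\beta')$, so $w$ is also in the domain of $\Sigma_{\beta'}$. Moreover, since $\Sigma_{\beta'}$ is just the shift operation $w_1w_2w_3\dots \mapsto w_2w_3w_4\dots$ on its domain, and the same is true of $\Sigma_\beta$, an easy induction on $i$ shows $\Sigma_{\beta'}^i(w) = \Sigma_\beta^i(w)$ for $0 \le i \le n-1$ (each iterate lies in $\W(\beta) \subseteq \W(\beta')$, so no domain issue arises). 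Therefore
$$\Pat(w,\Sigma_{\beta'},n) \;=\; \red(w,\Sigma_{\beta'}(w),\dots,\Sigma_{\beta'}^{n-1}(w)) \;=\; \red(w,\Sigma_{\beta}(w),\dots,\Sigma_{\beta}^{n-1}(w)) \;=\; \pi,$$
which exhibits $\pi$ as an allowed pattern of $\Sigma_{\beta'}$.

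There is no real obstacle here: the statement is a formal consequence of (i) domain monotonicity (Proposition~\ref{prop:incl}) and (ii) the fact that both shifts act by the same combinatorial rule, so they agree on the common part of their domain. The only mild point worth being explicit about is that the lexicographic order used to compute $\red$ is the same total order on $\WW_N$ regardless of which $\beta$-shift we are working with, so the standardizations genuinely coincide; everything else is bookkeeping.
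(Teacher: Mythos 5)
Your proof is correct and follows essentially the same approach as the paper: use Proposition~\ref{prop:incl} to transfer the witness word $w$ from $\W(\beta)$ to $\W(\beta')$, and observe that the two shifts coincide on the smaller domain so the induced pattern is unchanged. Your version simply spells out the induction on iterates that the paper leaves implicit.
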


\begin{proof}
If $\pi \in \Al(\Sigma_\beta)$, there exists by definition a word $w \in W(\beta)$ such that $\Pat(w,\Sigma_\beta,n) = \pi$, where $n$ is the length of $\pi$. By Proposition~\ref{prop:incl},  $w \in \W(\beta')$,
and since $\Pat(w, \Sigma_{\beta'}, n) = \pi$, we see that $\pi \in \Al(\Sigma_{\beta'})$.
\end{proof}

Now we can give the key definition of this section. We call $\B(\pi)$ the {\em shift-complexity} of $\pi$.
\begin{definition}\label{def:Bpi} For any permutation $\pi$, let
$$\B(\pi)=\inf\{\beta:\pi \in \Al(\Sigma_{\beta})\}.$$
\end{definition}

Equivalently, $\B(\pi)$ is the supremum of the set of values $\beta$ such that $\pi$ is a forbidden pattern of $\Sigma_\beta$.
If we think of the $\beta$-shifts $\Sigma_\beta$ as a family of functions parameterized by $\beta$, then the values of
$\beta$ for which there is a permutation $\pi$ with $\B(\pi)=\beta$ correspond to {\em phase transitions} where the set of allowed patterns of $\Sigma_\beta$ changes.
The next result describes the relationship between the permutation statistic $\B$ and the word statistic $\BW$.

\begin{proposition}\label{prop:BBW}
For any $\pi\in\S_n$,
$$\B(\pi)=\inf\{\BW(w):\Pat(w,\Sigma,n)=\pi\}.$$
\end{proposition}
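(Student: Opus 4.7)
The plan is to prove both inequalities $\B(\pi)\le R$ and $\B(\pi)\ge R$, where $R=\inf\{\BW(w):\Pat(w,\Sigma,n)=\pi\}$. Both directions will follow almost immediately from Proposition~\ref{prop:WBW}, which gives the precise relationship between the word statistic $\BW$ and membership in $\W(\beta)$, together with the fact (remarked before Corollary~\ref{cor:inclal}) that the restriction of $\Sigma_{\beta'}$ to $\W(\beta)$ equals $\Sigma_\beta$ whenever $\beta\le\beta'$, so that whether $\Pat(w,\Sigma,n)=\pi$ does not depend on which $\beta$-shift we view $w$ as living in, provided $w$ lies in the domain.

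For the direction $\B(\pi)\le R$, I would fix any word $w$ with $\Pat(w,\Sigma,n)=\pi$ and invoke equation~(\ref{eq:part3}) of Proposition~\ref{prop:WBW}, which says $\BW(w)=\inf\{\beta:w\in\W(\beta)\}$. Thus for every $\beta>\BW(w)$ one can pick some $\beta'\in(\BW(w),\beta]$ with $w\in\W(\beta')$, and by Proposition~\ref{prop:incl} also $w\in\W(\beta)$. Then $\pi=\Pat(w,\Sigma_\beta,n)$ exhibits $\pi$ as an allowed pattern of $\Sigma_\beta$, so $\B(\pi)\le\beta$. Letting $\beta\downarrow\BW(w)$ gives $\B(\pi)\le\BW(w)$, and taking the infimum over all such $w$ yields $\B(\pi)\le R$.

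For the direction $\B(\pi)\ge R$, I would take any $\beta$ with $\pi\in\Al(\Sigma_\beta)$. By the definition of allowed pattern there exists $w\in\W(\beta)$ with $\Pat(w,\Sigma_\beta,n)=\pi$. Applying the right-hand inclusion of~(\ref{eq:part1}) in Proposition~\ref{prop:WBW}, $w\in\W(\beta)$ forces $\BW(w)\le\beta$. Hence $R\le\BW(w)\le\beta$, and taking the infimum over all admissible $\beta$ yields $R\le\B(\pi)$.

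There is no real obstacle here: the content has already been packaged into Proposition~\ref{prop:WBW}. The only point requiring a brief justification is the observation that $\Pat(w,\Sigma,n)$ is well-defined independently of the particular $\beta$-shift in which $w$ is considered, which is immediate since the pattern depends only on the lexicographic order on the orbit $w,\Sigma(w),\dots,\Sigma^{n-1}(w)$, and this order is intrinsic to the words.
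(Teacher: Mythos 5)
Your proof is correct and rests on exactly the same ingredients as the paper's: Proposition~\ref{prop:WBW} relating $\BW(w)$ to membership in $\W(\beta)$, together with the fact that $\Pat(w,\Sigma,n)$ is intrinsic to $w$. The paper condenses this into a single chain of equalities using only equation~(\ref{eq:part3}) and an "inf of infs" identity, while you unpack it into the two inequalities $\B(\pi)\le R$ and $\B(\pi)\ge R$ using equations~(\ref{eq:part3}) and~(\ref{eq:part1}) respectively; the substance is the same.
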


\begin{proof}
Let $\Gamma$ be the set of words $w$ such that $\Pat(w,\Sigma,n)=\pi$.
The right hand side of the above equation equals
\beq\label{eq:infs}\inf\{\BW(w):w\in \Gamma\}=\inf\{\beta : \W(\beta)\cap\Gamma\neq\emptyset\},\eeq
using the equation~(\ref{eq:part3}) from Proposition~\ref{prop:WBW}.
The condition $\W(\beta)\cap\Gamma\neq\emptyset$, which states that there is some $w\in\W(\beta)$ with $\Pat(w,\Sigma,n)=\pi$, is equivalent
to the condition $\pi\in\Al(\Sigma_{\beta})$ by definition, so the right hand side of equation~(\ref{eq:infs}) equals $$\inf\{\beta:\pi \in \Al(\Sigma_{\beta})\}=\B(\pi).$$
\end{proof}

\section{Computation of $\B(\pi)$: from permutations to words}\label{sec:perm2word}

Suppose we are given $\pi\in\S_n$ with $n\ge2$.
The goal of this section and the next one is to describe a method to compute the shift-complexity of $\pi$.
In the rest of the paper, we refer to the condition $\Pat(w, \Sigma, n) = \pi$ by saying that $w$ {\em induces} $\pi$.
In some cases we will be able to find a word $w$ inducing $\pi$ such that $\BW(w)$ is smallest for all such words; when this happens, $\B(\pi)=\BW(w)$ and the infimum in Proposition~\ref{prop:BBW}
is a minimum. In other cases we will find a sequence of words $w^{(m)}$ inducing $\pi$ where $\BW(w^{(m)})$ approaches $\B(\pi)$ as $m$ grows.

This section is devoted to finding a word $w$ or a sequence $w^{(m)}$ with the above properties. In Section~\ref{sec:Bpi} we show how to compute the values of the statistic $\BW$ on these words in order to obtain $\B(\pi)$.

Let $N=N(\pi)$ for the rest of this section. From the definitions, it is clear that $\B(\pi)\le N$, and that there is some word $z\in\W(N)=\WW_N$ that induces $\pi$. The explicit construction of such words~$z$ is given in~\cite{Elishifts}.
It is important to notice that to find words $w$ and $w^{(m)}$ as described above, it is enough to consider only words in $\WW_N$.
Indeed, if $\B(\pi)=N$ (we will later see in equation~(\ref{eq:BN}) that this case never happens, but cannot rule it out just yet),
then any $z\in\WW_N$ inducing $\pi$ satisfies $\BW(z)=\B(\pi)$, and we can just take $w=z$.
On the other hand, to deal with the case $\B(\pi)<N$, note that any word $z$ with $\BW(z)<N$ must be in $\WW_N$.
This applies to $z=w$ for any word $w$ satisfying $\BW(w)=\B(\pi)$, and also to $z=w^{(m)}$ for words in the above sequence, provided that $\BW(w^{(m)})$ is close enough to $\B(\pi)$.
For convenience, a word $w$ inducing $\pi$ and satisfying
\beq\label{eq:smallword} \BW(w)=\B(\pi) \ \mbox{ or } \ \B(\pi)\le\BW(w)<N \eeq
will be called a {\em small} word.
For words in $\WW_N$ inducing $\pi$ we can apply Corollary~2.13 from~\cite{Elishifts}, which we restate here.

\begin{proposition}[\cite{Elishifts}]\label{prop:determined}
Let $N=N(\pi)$ as above, and suppose that $z\in\WW_N$ induces $\pi$. 
Then the entries $z_1z_2\dots z_{n-1}$ are uniquely determined by $\pi$.
\end{proposition}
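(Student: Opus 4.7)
The plan is to study the sequence of first letters $z_1, \dots, z_n$ of any inducing word $z \in \WW_N$ through the partial order imposed by $\pi$. First, I would observe that if $z$ induces $\pi$, then sorting positions by their $\pi$-rank yields a weakly increasing sequence $z_{\pi^{-1}(1)} \le z_{\pi^{-1}(2)} \le \dots \le z_{\pi^{-1}(n)}$ in $\{0, 1, \dots, N-1\}$: indeed, $\pi(i) < \pi(j)$ forces $z_{\andon{i}} < z_{\andon{j}}$ lexicographically, hence $z_i \le z_j$. Since $N = N(\pi)$ is the minimum alphabet size, this sequence must attain every value in $\{0, 1, \dots, N-1\}$, for otherwise we could relabel letters to fit a smaller alphabet and contradict minimality.

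Next, I would characterize exactly when a consecutive rank pair $(k, k+1)$ can be \emph{merged}, meaning $z_{\pi^{-1}(k)} = z_{\pi^{-1}(k+1)}$. Setting $i = \pi^{-1}(k)$ and $j = \pi^{-1}(k+1)$, equality of the first letters defers the comparison of $z_{\andon{i}}$ and $z_{\andon{j}}$ to $z_{\andon{i+1}}$ versus $z_{\andon{j+1}}$. When $i, j < n$, both deferred shifts are among the $n$ shifts matched by $\pi$, so merging is admissible precisely when $\pi(i+1) < \pi(j+1)$; when $i = n$ or $j = n$, the deferred comparison falls on the free tail $z_{\andon{n+1}}$, giving extra flexibility. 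Combining this with Theorem~\ref{thm:Npi2}---which gives $N(\pi) = 1 + \des(\hp) + \epsilon(\hp)$ and coincides with $1$ plus the number of non-admissible rank pairs---shows that to realize $\pi$ on an alphabet of size $N(\pi)$, every admissible merge must be made. Consequently $z_{\pi^{-1}(k)}$ equals the number of forced (non-admissible) splits among rank pairs $(k', k'+1)$ with $k' < k$, a function of $\pi$ alone.

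The main obstacle is to argue that the flexibility at position $n$---which can yield genuinely distinct inducing words, as in $\pi = 132$ where both $z = 010\cdots$ and $z = 011\cdots$ are valid---does not propagate to any other position. The point is that the two rank pairs adjacent to $\pi(n)$ are the only ones whose admissibility depends on the free tail $z_{\andon{n+1}}$; changing the merge decision there merely moves $z_n$ between the values of its neighboring blocks in the sorted sequence, while every other $z_i$ with $i < n$ is determined by merges involving only positions in $\{1, \dots, n-1\}$. This yields the uniqueness of $z_1, z_2, \dots, z_{n-1}$.
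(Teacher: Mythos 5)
Your overall strategy---comparing consecutive $\pi$-ranks to split the first letters into blocks, and distinguishing forced first-letter increments from comparisons that defer to later positions---captures the right intuition and is in the spirit of the explicit construction given after the proposition. However, two load-bearing steps in your argument are false or unjustified.

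First, the claim that the first $n$ letters $z_{\pi^{-1}(1)}, \dots, z_{\pi^{-1}(n)}$ must attain every value in $\{0,1,\dots,N-1\}$ is wrong, and the relabeling argument behind it fails because the infinite tail $z_{n+1}z_{n+2}\dots$ may contribute letters absent from the prefix. Take $\pi = 3421$, for which $N(\pi)=3$. The word $z = 12110^\infty \in \WW_3$ induces $\pi$ (one checks $z_{\andon{4}} < z_{\andon{3}} < z_{\andon{1}} < z_{\andon{2}}$), yet its first four letters lie in $\{1,2\}$, missing $0$; no relabeling to a two-letter alphabet is possible since $z_5 = 0$. In fact the construction in the paper forces only the maximum value $N-1$ to appear among $z_1,\dots,z_{n-1}$; the minimum is $1$ rather than $0$ precisely in the boundary case $\pi(n)=1$, $\pi(n-1)=2$.

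Second, the statement that $N(\pi) = 1+\des(\hp)+\epsilon(\hp)$ ``coincides with $1$ plus the number of non-admissible rank pairs'' is an unproven equivalence, and it is false under the reading you set up. For $\pi = 132$ both consecutive rank pairs $(1,2)$ and $(2,3)$ involve position $n = 3$ and are therefore, by your criterion, not non-admissible; yet $\des(\hp)+\epsilon(\hp)=1$. The correct accounting---which is what the bar-insertion algorithm after the proposition encodes and what Theorem~\ref{thm:Npi2} is really counting---first removes $z_n$ from the rank-sorted sequence, thereby collapsing the two rank pairs adjacent to $\pi(n)$ into a single comparison between ranks $\pi(n)-1$ and $\pi(n)+1$, and adds the boundary correction $\epsilon(\hp)$. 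Your closing paragraph, whose job is to show that the two flexible decisions at position $n$ do not leak into $z_1,\dots,z_{n-1}$, needs exactly this deletion bookkeeping to be made precise; as written it is an assertion, and because the two earlier steps it rests on are incorrect, the conclusion does not follow from your argument.
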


In the rest of this section, we let $\zeta=\zeta(\pi)=z_1z_2\dots z_{n-1}$ be the word defined in Proposition~\ref{prop:determined}. It follows from~\cite{Elishifts} that the entries of $\zeta$ can be computed as follows:
\bit
\item Write the sequence of (unassigned) variables $z_{\pi^{-1}(1)}z_{\pi^{-1}(2)}\cdots z_{\pi^{-1}(n)}$ in this order and remove $z_n$ from it.
\item For each pair $z_iz_j$ of adjacent entries in the sequence with $z_i$ to the left of $z_j$, insert a vertical bar between them if and only if $\pi(i+1)>\pi(j+1)$.
\item In the case that $\pi(n)=1$ and $\pi(n-1)=2$, insert a vertical bar before the first entry in the sequence (which is $z_{\pi^{-1}(2)}$ in this case).
\item Set each $z_i$ in the sequence to equal the number of vertical bars to its left.
\eit
For example, if $\pi=892364157\in\S_9$, the sequence with $z_n$ removed is $z_7z_3z_4z_6z_8z_5z_1z_2$, which becomes $z_7|z_3z_4|z_6z_8|z_5z_1|z_2$ after inserting the bars,
so $\zeta(\pi)=z_1z_2\dots z_8=34113202$.

It is shown in~\cite[Lemma~2.8]{Elishifts} that if $1\le i,j<n$ are such that $\pi(i)<\pi(j)$ and $\pi(i+1)>\pi(j+1)$, then the corresponding entries in $\zeta(\pi)$ satisfy $z_i<z_j$. This statement is logically equivalent to the following.
\begin{lemma}[\cite{Elishifts}]\label{lem:order}
 If $1\le i,j<n$ are such that $z_j\le z_i$ and $\pi(j)>\pi(i)$, then $\pi(j+1)>\pi(i+1)$ and $z_j=z_i$.
\end{lemma}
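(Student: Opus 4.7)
The plan is to derive Lemma~\ref{lem:order} from the inequality stated just above it (from \cite[Lemma~2.8]{Elishifts}) using its contrapositive, combined with a monotonicity observation coming directly from the construction of $\zeta(\pi)$.

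The key preliminary observation is this: whenever $1 \le i, j < n$ and $\pi(i) < \pi(j)$, the entry $z_i$ appears strictly to the left of $z_j$ in the sequence $z_{\pi^{-1}(1)} z_{\pi^{-1}(2)} \cdots z_{\pi^{-1}(n)}$ (after removal of $z_n$). Since each $z_k$ is finally set equal to the number of vertical bars to its left, and the cumulative count of bars is a weakly increasing function of position within the sequence, this forces $z_i \le z_j$ automatically. The extra bar inserted before the first entry in the special case $\pi(n)=1, \pi(n-1)=2$ sits to the left of every other entry, so it contributes equally to both $z_i$ and $z_j$ and does not disturb this inequality.

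Now assume the hypotheses of the lemma: $\pi(j) > \pi(i)$ and $z_j \le z_i$. The preliminary observation yields $z_i \le z_j$, and together with $z_j \le z_i$ this gives $z_i = z_j$, which is one of the two conclusions. To establish $\pi(j+1) > \pi(i+1)$, I would argue by contradiction: if instead $\pi(j+1) < \pi(i+1)$ (equality being ruled out because $\pi$ is a bijection and $i+1 \ne j+1$), then the pair $(i,j)$ would satisfy the hypotheses $\pi(i) < \pi(j)$ and $\pi(i+1) > \pi(j+1)$ of the \cite{Elishifts} statement, forcing $z_i < z_j$ and contradicting $z_i = z_j$ just obtained.

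I do not expect a serious obstacle here. The whole argument is essentially formal once the monotonicity observation about the bar-counting construction is spelled out; the one mildly delicate point is verifying that the optional initial bar in the special case does not break the monotonicity of the bar count, but since it lies to the left of every entry in the sequence, this is immediate.
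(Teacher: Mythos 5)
Your argument is correct. The paper itself treats this lemma tersely: it remarks that the lemma is logically equivalent to the cited statement from~\cite{Elishifts} (whose contrapositive gives $\pi(j+1)>\pi(i+1)$, which is exactly your contradiction step), and it disposes of $z_j=z_i$ in one sentence by noting that $z_j<z_i$ would give $z_{\andon{j}}<z_{\andon{i}}$ lexicographically and hence $\pi(j)<\pi(i)$ in the induced pattern, contradicting the hypothesis. Your route to $z_j=z_i$ is a genuine, if modest, alternative: instead of invoking the fact that a word starting with $\zeta$ induces $\pi$, you read off the weak monotonicity $\pi(i)<\pi(j)\Rightarrow z_i\le z_j$ directly from the bar-count recipe for $\zeta(\pi)$ --- $z_i$ sits strictly to the left of $z_j$ in $z_{\pi^{-1}(1)}\cdots z_{\pi^{-1}(n)}$ after deleting $z_n$, the cumulative bar count is weakly increasing, and the optional initial bar lies to the left of all entries so it cannot disturb the comparison. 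This keeps the $z_j=z_i$ conclusion entirely inside the combinatorial definition of $\zeta$, which is a small gain in self-containment; the paper's version is shorter but implicitly relies on the existence of a word extending $\zeta$ that induces $\pi$. Both proofs ultimately rest on~\cite[Lemma~2.8]{Elishifts} for the descent conclusion $\pi(j+1)>\pi(i+1)$, as they must.
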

The conclusion $z_j=z_i$ is clear from the fact that if $z_j<z_i$, then $z_{\andon{j}}<z_{\andon{i}}$, contradicting $\pi(j)<\pi(i)$.
From Proposition~\ref{prop:determined} we see that \beq\label{eq:boundz} 0\le z_1,\dots,z_{n-1}\le N-1.\eeq
It is shown in~\cite{Elishifts} that if $\pi(n-1)=n-1$ and $\pi(n)=n$, then $0\le z_1,\dots,z_{n-1}\le N-2$.

Proposition~\ref{prop:determined} and the paragraph preceding it imply that for any small word $w$ (as defined by condition~(\ref{eq:smallword})) inducing $\pi$,
the first $n-1$ entries of $w$ are given by $\zeta=z_1z_2\dots z_{n-1}$. In the rest of this section we show how to find the remaining entries $w_{n+1}w_{n+2}\dots$.
We begin with an easy special case.

\begin{proposition}\label{prop:c1} Suppose that $\pi(n) = 1$. Let
\beq\label{eq:w0} w = \zeta 0^\infty.\eeq  Then $w$ induces $\pi$, and for any other word $v$ that induces $\pi$, we have $\BW(v)>\BW(w)$. In particular, $$\B(\pi)=\BW(w)=\BV(w_{\andon{\ell}}),$$ where $\ell=\pi^{-1}(n)$.
\end{proposition}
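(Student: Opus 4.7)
\textbf{The plan} has three steps: (a) show that $w=\zeta 0^\infty$ induces $\pi$; (b) establish $\BW(w)=\BV(w_{\andon{\ell}})$; and (c) show that every other word $v$ inducing $\pi$ satisfies $\BW(v)>\BW(w)$. Combined with Proposition~\ref{prop:BBW}, these give $\B(\pi)=\BW(w)=\BV(w_{\andon{\ell}})$.

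For (a), Proposition~\ref{prop:determined} guarantees some $z^*\in\WW_N$ with prefix $\zeta$ that induces $\pi$, and I would show $w$ works by comparing shift-by-shift against $z^*$. For $1\le i<j\le n$, the shifts $w_{\andon{i}}$ and $z^*_{\andon{i}}$ (and likewise $w_{\andon{j}}$ and $z^*_{\andon{j}}$) agree on their first $n-i$ (respectively $n-j$) entries, coming from $\zeta$. If $z_i\dots z_{n-1}$ and $z_j\dots z_{n-1}$ first disagree within the first $n-j$ positions, then the $w$-comparison matches the $z^*$-comparison and hence matches $\pi$. If instead they align on that overlap, the comparison of $w_{\andon{i}}$ versus $w_{\andon{j}}$ reduces to comparing $w_{\andon{i+n-j}}$ against $w_{\andon{n}}=0^\infty$; to match $\pi$ (which requires $w_{\andon{i+n-j}}>0^\infty$, since $z^*_{\andon{i+n-j}}>z^*_{\andon{n}}$ by $\pi(i+n-j)>\pi(n)=1$), the suffix $z_{i+n-j}\dots z_{n-1}$ must contain a positive entry. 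This is the main obstacle, and I plan to handle it by the simple lemma $z_{n-1}>0$: if $z_{n-1}=0$, then $z^*_{\andon{n-1}}=0\,z^*_n z^*_{n+1}\dots$ compared position-by-position against $z^*_{\andon{n}}=z^*_n z^*_{n+1}\dots$ would force $z^*_n=z^*_{n+1}=\dots=0$ (each nonzero $z^*_{n+k}$ yields $z^*_{\andon{n-1}}<z^*_{\andon{n}}$, contradicting $\pi(n-1)>\pi(n)$), but then both shifts equal $0^\infty$, violating distinctness. Once $z_{n-1}>0$, every relevant suffix is nonzero and the comparison goes through; the boundary case $j=n$ is the same argument applied with $i+n-j=i$.

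For (b), I would apply Lemma~\ref{lem:z} with $z=w_{\andon{\ell}}$. The hypothesis $w_{\andon{j}}\le w_{\andon{\ell}}$ holds for all $j\ge 1$: for $j\in\{1,\dots,n\}$ by the lexicographic maximality provided by (a) and $\pi(\ell)=n$, and for $j>n$ because $w_{\andon{j}}=0^\infty$. The lemma yields $\BV(w_{\andon{j}})\le\BV(w_{\andon{\ell}})$ for every $j$, so $\BW(w)=\BV(w_{\andon{\ell}})$.

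For (c), first note that $f_{w_{\andon{\ell}}}(N)\le\sum_{k=1}^{n-\ell}(N-1)/N^k=1-N^{-(n-\ell)}<1$, so $\BW(w)<N$. If $\BW(v)\ge N$ the strict inequality is immediate. Otherwise $v\in\WW_N$ (by the remarks preceding Proposition~\ref{prop:determined}), so $v$ has prefix $\zeta$, and since $v\neq w$ some entry $v_m$ with $m\ge n$ is positive. Then $v_{\andon{\ell}}$ agrees with $w_{\andon{\ell}}$ on the first $n-\ell$ entries, and a termwise comparison of their series gives $f_{v_{\andon{\ell}}}(\beta)>f_{w_{\andon{\ell}}}(\beta)$ for every $\beta>1$. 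Since $f_{w_{\andon{\ell}}}$ is decreasing, this forces $\BV(v_{\andon{\ell}})>\BV(w_{\andon{\ell}})$, and hence $\BW(v)\ge\BV(v_{\andon{\ell}})>\BW(w)$.
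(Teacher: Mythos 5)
Your proof is correct, and for parts (b) and (c) it follows essentially the paper's route: apply Lemma~\ref{lem:z} with $z=w_{\andon{\ell}}$ to get $\BW(w)=\BV(w_{\andon{\ell}})$, and for any other inducing $v$, either $\BW(v)\ge N>\BW(w)$, or $v\in\WW_N$ shares the prefix $\zeta$, dominates $w$ termwise with a strict excess at some position $\ge n$, and hence $\BV(v_{\andon{\ell}})>\BV(w_{\andon{\ell}})=\BW(w)$. (You are a bit more explicit than the paper about the $\BW(v)\ge N$ case, which the paper dispatches via the preceding ``small word'' discussion, but the substance is the same.) The genuine difference is in part (a): the paper simply cites~\cite{Elishifts} for the fact that $\zeta 0^\infty$ induces $\pi$, whereas you give a self-contained argument by comparing $w$ shift-by-shift against a known inducing word $z^*$ with prefix $\zeta$. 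When two suffixes of $\zeta$ disagree in the overlap, $w$ and $z^*$ induce the same comparison; when they agree, the comparison reduces to $w_{\andon{i+n-j}}$ versus $0^\infty$, and your lemma $z_{n-1}>0$ (proved by a clean contradiction forcing $z^*_{\andon{n-1}}=z^*_{\andon{n}}$) guarantees the required strict inequality. This is a valid and pleasantly elementary replacement for the citation, and it makes the proposition self-contained modulo Proposition~\ref{prop:determined}. One small nit: the existence of an inducing $z^*\in\WW_N$ is asserted in the paragraph preceding Proposition~\ref{prop:determined} (via the explicit construction in~\cite{Elishifts}), while Proposition~\ref{prop:determined} itself only gives uniqueness of the prefix, so you should cite the former for existence.
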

\begin{proof}
It is shown in~\cite{Elishifts} that $w$ induces $\pi$. Thus, noting that $w_{\andon{n}}=0^\infty$, we have that $w_{\andon{s}}<w_{\andon{\ell}}$ for all $s\neq\ell$.
Now Lemma~\ref{lem:z} with $z=w_{\andon{\ell}}$ implies that $w_{\andon{s}}\lesseqv w_{\andon{\ell}}$ for all $s$, so $\BW(w)=\BV(w_{\andon{\ell}})$.

On the other hand, by Proposition~\ref{prop:determined}, any other small word $v$ that induces $\pi$ must have $\zeta$ as a prefix, so $v_i\ge w_i$ for all $i$, with at least one inequality being strict for some $i\ge n$.
It follows that $\BW(v)\ge\BV(v_{\andon{\ell}})>\BV(w_{\andon{\ell}})=\BW(w)$ and that $\B(\pi)=\BW(w)$.
\end{proof}

From equation~(\ref{eq:boundz}) it follows that $w=\zeta 0^\infty\in\WW_N$ and $\BW(w)<N$.
Let us now consider the case where $n$ appears in $\pi$ to the right of the entry $\pi(n)-1$.

\begin{theorem}\label{th:findw1} Suppose that $c=\pi(n) \neq 1$. Let $k=\pi^{-1}(c-1)$ and $\ell=\pi^{-1}(n)$, and suppose that $\ell > k$.
Let
\beq\label{eq:w1}w = z_1z_2\cdots z_{n-1}\, z_{k} z_{k+1} \cdots z_{\ell-2}(z_{\ell-1}{+}1) 0^\infty.\eeq
Then $w$ induces $\pi$, and for any other word $v$ that induces $\pi$, we have $\BW(v)\ge\BW(w)$.
In particular, $$\B(\pi)=\BW(w)=\BV(w_{\andon{\ell}}).$$
\end{theorem}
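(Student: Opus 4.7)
The proof has three pieces to assemble: (i) that $w$ as constructed induces $\pi$; (ii) that $\BW(w)=\BV(w_{\andon{\ell}})$; and (iii) that any other word $v$ inducing $\pi$ satisfies $\BW(v)\ge\BW(w)$. Combined with Proposition~\ref{prop:BBW}, these yield $\B(\pi)=\BW(w)=\BV(w_{\andon{\ell}})$. Throughout, I would invoke Proposition~\ref{prop:determined} to restrict attention to small words and to force every such word to agree with $\zeta=z_1z_2\cdots z_{n-1}$ on its first $n-1$ positions, so that only the suffix from position $n$ onward is free.

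For part (i), I would verify that $w_{\andon{n}}$ slots into rank $c=\pi(n)$ among $w_{\andon{1}},\ldots,w_{\andon{n}}$. Direct computation shows $w_{\andon{n}}$ and $w_{\andon{k}}$ agree on positions $1,\ldots,\ell-k-1$ (both carrying $z_k,z_{k+1},\ldots,z_{\ell-2}$) and diverge at position $\ell-k$, where $w_{\andon{n}}$ carries $z_{\ell-1}+1$ versus $z_{\ell-1}$ in $w_{\andon{k}}$, giving $w_{\andon{k}}<w_{\andon{n}}$ lexicographically. For comparisons between $w_{\andon{n}}$ and $w_{\andon{j}}$ with $j\in[n-1]\setminus\{k\}$, I would use Lemma~\ref{lem:order} together with the description of $\zeta$ recalled after Proposition~\ref{prop:determined} to conclude that the ordering matches the relation between $c$ and $\pi(j)$.

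For part (ii), the plan is to apply Lemma~\ref{lem:z} with $z=w_{\andon{\ell}}$; this reduces matters to checking $w_{\andon{j}}\le w_{\andon{\ell}}$ lexicographically for every $j\ge1$. For $j\in[n]$ this is immediate from $\pi(\ell)=n$ together with part (i). For $j>n$, writing $j=n+s$, the shift $w_{\andon{n+s}}=z_{k+s}z_{k+s+1}\cdots z_{\ell-2}(z_{\ell-1}+1)0^\infty$ admits a direct comparison to $w_{\andon{k+s}}$: they agree on an initial prefix and at the first point of disagreement $w_{\andon{n+s}}>w_{\andon{k+s}}$. A case analysis relative to where $w_{\andon{k+s}}$ first falls below $w_{\andon{\ell}}$ (which happens because $\pi(k+s)<n=\pi(\ell)$) transfers this to $w_{\andon{n+s}}\le w_{\andon{\ell}}$. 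Lemma~\ref{lem:z} then gives $\BV(w_{\andon{j}})\le\BV(w_{\andon{\ell}})$ for all $j$, so $\BW(w)=\BV(w_{\andon{\ell}})$.

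For part (iii), the argument is the most delicate. Given a small word $v$ inducing $\pi$ with prefix $\zeta$, the constraint $\pi(k)=c-1<c=\pi(n)$ translates into $v_{\andon{k}}<v_{\andon{n}}$, which forces a lexicographic inequality on the suffix $v_nv_{n+1}\cdots$. I would case-analyze the first position $j^*$ at which this suffix deviates from that of $w$: if $j^*\le\ell-k-1$, the deviation must be upward, which makes $v_{\andon{\ell}}$ coordinatewise dominate $w_{\andon{\ell}}$ from the relevant coordinate onward and thus $\BV(v_{\andon{\ell}})\ge\BV(w_{\andon{\ell}})$; if $j^*\ge\ell-k$, one either still obtains coordinatewise dominance or identifies an alternative shift $v_{\andon{j}}$ whose $\BV$-value is at least $\BV(w_{\andon{\ell}})$. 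The main obstacle is precisely here: since lexicographic dominance alone does not imply dominance of $\BV$ (a tail can drop sharply), the argument must either produce coordinatewise dominance or pinpoint a specific shift of $v$ that carries the supremum $\BW(v)$. Managing this case analysis cleanly, especially when $\ell<n$ and alternative suffixes of the shape $z_kz_{k+1}\cdots z_{\ell-1}(z_\ell+1)0^\infty$ are available, is where the technical work concentrates.
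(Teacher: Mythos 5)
Your outline correctly identifies the three components and the role of Proposition~\ref{prop:determined}, but in each part you stop short of the argument that actually closes the gap, and in part~(iii) you explicitly flag the obstruction without resolving it. There are genuine missing ideas.

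For part~(i), verifying that $w_{\andon{n}}$ ``slots into rank $c$'' is not sufficient, because the relative order of $w_{\andon{i}}$ and $w_{\andon{j}}$ for $i,j<n$ is not automatic from the prefix $\zeta$ alone: the shifts have common tails extending into positions $n,n+1,\dots$, and Proposition~\ref{prop:determined} gives only a necessary, not a sufficient, condition. The paper introduces a statement $S(i,j)$ (``$\pi(i)<\pi(j)$ implies $w_{\andon{i}}<w_{\andon{j}}$''), proves a transfer $\neg S(i,j)\Rightarrow\neg S(g(i),g(j))$ for a cyclic map $g$ on $\{k,\dots,n-1\}$, and derives a contradiction by iterating $g$; the reduction of the case $i=n$ to $i=k$ hinges on a separate structural fact (Lemma~\ref{lem:noinbetween}: no shift of $w$ lies strictly between $w_{\andon{k}}$ and $w_{\andon{n}}$). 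Nothing in your sketch plays the role of Lemma~\ref{lem:noinbetween} or of the iterated transfer. For part~(ii), the inequality $w_{\andon{j}}\le w_{\andon{\ell}}$ for $j>n$ is exactly Lemma~\ref{lem:nlargest}, also proved by a $T(j,r)\Rightarrow T(j+1,r+1)$ transfer ending in a contradiction at the last nonzero entry of $w$; knowing $w_{\andon{n+s}}>w_{\andon{k+s}}$ together with $w_{\andon{k+s}}<w_{\andon{\ell}}$ gives no upper bound on $w_{\andon{n+s}}$, so your ``transfer'' step is not an argument.

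For part~(iii) you are right that coordinatewise dominance is too weak, but you do not supply the replacement. The paper's resolution has two steps missing from your outline. First, one may assume $y$ has the special form $(z_k\cdots z_{n-1})^t z_k\cdots z_{i-1}(z_i{+}1)0^\infty$: since $y>(z_k\cdots z_{n-1})^\infty$, one can decrease $y$ at the first deviation to a unit bump followed by zeros, and this modification can only decrease $\BW(\zeta y)$. Second, once $y$ has that form one locates a single index $j$ with $w_{\andon{\ell}}\le v_{\andon{j}}$; Lemma~\ref{lem:nlargest} then gives $w_{\andon{s}}\le v_{\andon{j}}$ for \emph{all} $s$, and Lemma~\ref{lem:z} applied with $z=v_{\andon{j}}$ converts this to $w_{\andon{s}}\lesseqv v_{\andon{j}}$ for all $s$, hence $\BW(w)\le\BV(v_{\andon{j}})\le\BW(v)$. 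This use of Lemma~\ref{lem:z} with a carefully chosen shift $v_{\andon{j}}$ as the upper word is the key bridge from a lexicographic comparison to a $\BW$ inequality, and it is precisely the step your proposal acknowledges but does not produce.
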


In order to establish this result, we begin proving some facts about $w$ as defined in equation~(\ref{eq:w1}).
We know by equation~(\ref{eq:boundz}) that $z_{\ell-1}\le N-1$.
We claim that $z_{\ell-1}<N-1$.
Indeed, if $z_{\ell-1}=N-1$ and $\ell<n$, using that $z_\ell\le z_{\ell-1}$ and $\pi(\ell)>\pi(\ell-1)$, Lemma~\ref{lem:order} would imply that $\pi(\ell+1)>\pi(\ell)=n$,
which is a contradiction. And if $z_{\ell-1}=N-1$ and $\ell=n$, then any word $z$ starting with $\zeta$ and satisfying $z_{\andon{\ell-1}}<z_{\andon{\ell}}$ (a necessary condition for $z$ to induce $\pi$) would
need to have some entry $z_i\ge N$, contradicting the definition of $N$ and Proposition~\ref{prop:determined}.
Thus, $z_{\ell-1}<N-1$, from where $w\in\WW_N$ and, since $w$ ends with $0^\infty$, $\BW(w)<N$ in this case as well.

Lemmas~\ref{lem:nlargest} and~\ref{lem:noinbetween} below assume the notation from the statement of
Theorem~\ref{th:findw1}.

\begin{lemma}\label{lem:nlargest}
For all $s\neq \ell$, we have $w_{\andon{s}}<w_{\andon{\ell}}$.
\end{lemma}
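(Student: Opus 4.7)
My plan is to show $w_{\andon{s}}<w_{\andon{\ell}}$ for every $s\ge 1$ with $s\ne\ell$, partitioning $s$ into three regimes determined by where $s$ falls within $w=\zeta\cdot z_kz_{k+1}\cdots z_{\ell-2}(z_{\ell-1}+1)0^\infty$. The easy regime is $s\ge n+\ell-k$, for which $w_{\andon{s}}=0^\infty$; since $k<\ell\le n$ forces $n-k\ge 1$, the positive entry $z_{\ell-1}+1$ sits at relative position $n-k$ in $w_{\andon{\ell}}$, so $w_{\andon{\ell}}>0^\infty$.

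For $1\le s\le n$, $s\ne\ell$, I would invoke the existence, established in~\cite{Elishifts}, of a reference word $\tilde z\in\WW_N$ inducing $\pi$; by Proposition~\ref{prop:determined} it starts with $\zeta$, so $\tilde z_i=w_i$ for $1\le i\le n-1$. Since $\pi(s)<n=\pi(\ell)$, we have $\tilde z_{\andon{s}}<\tilde z_{\andon{\ell}}$; let $d\ge 1$ be the first position of disagreement, so $\tilde z_{s+d-1}<\tilde z_{\ell+d-1}$. When both $s+d-1\le n-1$ and $\ell+d-1\le n-1$, the inequality transfers verbatim to $w$ and the first $d-1$ entries of the two suffixes coincide, giving $w_{\andon{s}}<w_{\andon{\ell}}$. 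When one of those indices exceeds $n-1$, I would trace the explicit continuation $z_k,z_{k+1},\ldots,z_{\ell-2},(z_{\ell-1}+1),0,0,\ldots$ of $w$ past $\zeta$ and invoke Lemma~\ref{lem:order} to recover the comparison without relying on $\tilde z$'s undetermined tail.

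For the remaining regime $n<s<n+\ell-k$, write $s=n+p$ with $1\le p\le\ell-k-1$, so $w_{\andon{n+p}}=z_{k+p}\cdots z_{\ell-2}(z_{\ell-1}+1)0^\infty$. If $\ell<n$, the leading entries are $z_{k+p}$ versus $z_\ell$, and since $\pi(k+p)<n=\pi(\ell)$ I reduce to the previous regime via $\tilde z_{\andon{k+p}}<\tilde z_{\andon{\ell}}$. If $\ell=n$, then $w_{\andon{n+p}}$ is a proper tail of $w_{\andon{\ell}}=w_{\andon{n}}$; here I iterate Lemma~\ref{lem:order} starting from $\pi(k)=n-1>\pi(k+p)$ to propagate the chain $\pi(k+i)>\pi(k+p+i)$, obtaining $z_{k+i}\ge z_{k+p+i}$ for the relevant indices and crucially $z_{n-p-1}>z_{n-1}$, which ensures any ties in the common prefix resolve in favor of $w_{\andon{n}}$ at or before position $n-k$.

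The main obstacle is the subcase of Regime II where the first discrepancy between $\tilde z_{\andon{s}}$ and $\tilde z_{\andon{\ell}}$ occurs past position $n-1$: there $\tilde z$'s extension is not pinned down and may differ from $w$'s, so the argument must rely on Lemma~\ref{lem:order} and the combinatorial structure of $\zeta$ rather than a direct transfer from $\tilde z$. The analogous iterated use of Lemma~\ref{lem:order} in Regime III when $\ell=n$ presents a similar technical challenge.
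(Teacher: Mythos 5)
Your three-regime decomposition is a sensible start and Regime~I is correct, but the other two regimes have gaps that you yourself flag as ``the main obstacle'' without closing them. The plan to transfer a lexicographic comparison from a reference word $\tilde z\in\WW_N$ inducing $\pi$ breaks down exactly where it matters: $\tilde z$ and $w$ agree only on positions $1$ through $n-1$, so once the comparison $\tilde z_{\andon{s}}<\tilde z_{\andon{\ell}}$ is first decided at or beyond position $n$ of $\tilde z$, it carries no information about $w_{\andon{s}}$ vs.\ $w_{\andon{\ell}}$, whose tails are different words. The Regime~III ($\ell<n$) ``reduction'' compounds this: $w_{\andon{n+p}}$ is not $\tilde z_{\andon{k+p}}$ --- they share the prefix $z_{k+p}\cdots z_{\ell-2}$, but at the next position $w_{\andon{n+p}}$ has $z_{\ell-1}{+}1$ while $\tilde z_{\andon{k+p}}$ has $z_{\ell-1}$, so in fact $w_{\andon{n+p}}>\tilde z_{\andon{k+p}}$ lexicographically, and bounding the latter by $\tilde z_{\andon{\ell}}$ does not bound the former by $w_{\andon{\ell}}$. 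So the proposal, as written, is not a proof.

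The paper avoids an auxiliary word altogether. It argues by contradiction: assume $w_{\andon{\ell}}<w_{\andon{s}}$ for some $s\neq\ell$ (equality is impossible, and $s\le n+d$ where $d=\ell-k-1$), extend $\pi$ by setting $\pi(n{+}i)=\pi(k{+}i)$ for $1\le i\le d$, and propagate the two-index statement $T(j,r)$: ``$w_{\andon{j}}<w_{\andon{r}}$ and $\pi(j)>\pi(r)$.'' Lemma~\ref{lem:order}, together with the observation $w_{\andon{k}}<w_{\andon{n}}$ built into the definition of $w$, shows that for $r<n+d$ one has $T(j,r)\Rightarrow T(j{+}1,r{+}1)$ when $j<n$ and $T(n,r)\Rightarrow T(k,r)$. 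Since $T(\ell,s)$ holds and the second index must eventually reach $n+d$, one arrives at $T(j,n{+}d)$ with $j<n$; there $w_{\andon{n+d}}=(z_{\ell-1}{+}1)0^\infty$ and Lemma~\ref{lem:order} force $\pi(j{+}1)>\pi(\ell)=n$, a contradiction. This single iterative argument handles uniformly everything you split across Regimes~II and~III, and it is exactly the systematic use of Lemma~\ref{lem:order} that your proposal gestures toward (especially in the $\ell=n$ subcase) but does not carry out.
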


\begin{proof}
For convenience, let $d=\ell-k-1$ and define $\pi(n+i)=\pi(k+i)$ for $1\le i\le d$. Note that $w_{n+i}=w_{k+i}=z_{k+i}$ for $0\le i<d$ and that $w_{n+d}=w_{k+d}+1=z_{\ell-1}+1$.

Suppose for contradiction that there is some $s\neq \ell$ such that $w_{\andon{\ell}}\le w_{\andon{s}}$. Note that we cannot have $w_{\andon{\ell}}=w_{\andon{s}}$ because the position of the last nonzero entry $z_{\ell-1}{+1}$ is different in the two words,
so we must have $w_{\andon{\ell}}< w_{\andon{s}}$. Note also that $s\le n+d$ (otherwise we would have $w_{\andon{s}}=0^\infty$) and thus $\pi(s)<n=\pi(\ell)$.
For any $1\le j\le n$ and $1\le r\le n+d$, let $T(j,r)$ be the statement
$$w_{\andon{j}}< w_{\andon{r}} \ \mbox{ and }\ \pi(j)>\pi(r).$$
Our assumption implies that $T(\ell,s)$ holds.

Suppose now that $T(j,r)$ holds for certain $j,r$. Consider the following cases.
\bit
\item If $j=n$, then $w_{\andon{n}}< w_{\andon{r}}$ and $\pi(n)=c>\pi(r)$. From the definition of $w$, we have $w_{\andon{k}}<w_{\andon{n}}$. This implies that $w_{\andon{k}}<w_{\andon{r}}$
and $\pi(k)=c-1\ge\pi(r)$. Note also that the last inequality must be strict, otherwise $r=k$, which is impossible because $w_{\andon{k}}<w_{\andon{r}}$. It follows that $T(k,r)$ holds in this case.
\item If $j<n$ and $r<n-1$, then the first letters of $w_{\andon{j}}$ and $w_{\andon{r}}$ are $z_j$ and $z_r$, respectively, and $z_j\le z_r$.
By Lemma~\ref{lem:order}, $\pi(j+1)>\pi(r+1)$ and $z_j=z_r$, from where it also follows that $w_{\andon{j+1}}< w_{\andon{r+1}}$. Thus $T(j+1,r+1)$ holds.
\item If $j<n$ and $r=n+i$ for some $0\le i<d$,
then the first letters of $w_{\andon{j}}$ and $w_{\andon{r}}$ are $z_j$ and $z_{k+i}$, respectively, and $z_j\le z_{k+i}$. Also, $\pi(j)>\pi(r)\ge\pi(k+i)$, with strict inequality only when $i=0$.
By Lemma~\ref{lem:order}, $\pi(j+1)>\pi(k+i+1)=\pi(r+1)$ and $z_j=z_{k+i}$, from where it also follows that $w_{\andon{j+1}}< w_{\andon{r+1}}$. Thus $T(j+1,r+1)$ holds.
\item If $j<n$ and $r=n+d$, then $w_{\andon{r}}=(z_{\ell-1}{+}1)0^\infty$, so the fact that $w_{\andon{j}}< w_{\andon{r}}$ implies that $w_j=z_j\le z_{\ell-1}$. We also have $\pi(j)>\pi(r)=\pi(k+d)=\pi(\ell-1)$.
By Lemma~\ref{lem:order}, $\pi(j+1)>\pi(\ell)=n$, which is a contradiction.
\eit
We have shown that for $r<n+d$,
$T(j,r)$ implies $T(j+1,r+1)$ if $j<n$, and it implies $T(k,r)$ if $j=n$. It follows that if $T(\ell,s)$ holds,
then $T(j,r+d)$ must hold for some $j$, but this leads to a contradiction,
concluding our proof.
\end{proof}

\begin{lemma}\label{lem:noinbetween}
There exists no $t$ such that $w_{\andon{k}}<w_{\andon{t}}<w_{\andon{n}}$.
\end{lemma}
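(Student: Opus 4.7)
The plan is to exploit the fact that $w_{\andon{k}}$ and $w_{\andon{n}}$ share a long common prefix and then differ by exactly $1$. Reading off from~(\ref{eq:w1}),
$$w_{\andon{k}} \;=\; z_k z_{k+1}\cdots z_{\ell-2}\,z_{\ell-1}\,z_\ell z_{\ell+1}\cdots, \qquad w_{\andon{n}} \;=\; z_k z_{k+1}\cdots z_{\ell-2}\,(z_{\ell-1}{+}1)\,0^\infty,$$
so these two suffixes agree on positions $1,\dots,\ell-k-1$ and at position $\ell-k$ they differ by exactly one. The guiding intuition is that no word has room to fit lexicographically strictly between them, and the proof just formalizes this.

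Suppose toward a contradiction that $w_{\andon{k}}<w_{\andon{t}}<w_{\andon{n}}$ for some $t$. First I would show that $w_{\andon{t}}$ itself must begin with the string $z_k z_{k+1}\cdots z_{\ell-2}$. Let $p$ be the first position at which $w_{\andon{t}}$ disagrees with $w_{\andon{k}}$; the inequality $w_{\andon{k}}<w_{\andon{t}}$ forces the $p$-th letter of $w_{\andon{t}}$ to exceed $w_{k+p-1}$. If $p\le\ell-k-1$, then $w_{k+p-1}=z_{k+p-1}$ also appears at position $p$ of $w_{\andon{n}}$, yielding $w_{\andon{t}}>w_{\andon{n}}$, a contradiction. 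So the common prefix is shared by $w_{\andon{t}}$ as well.

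With that prefix pinned down, the letter at position $\ell-k$ of $w_{\andon{t}}$ must equal either $z_{\ell-1}$ or $z_{\ell-1}+1$, since no integer lies strictly between these two. If it is $z_{\ell-1}+1$, then $w_{\andon{t}}$ agrees with $w_{\andon{n}}$ on positions $1,\dots,\ell-k$, and the inequality $w_{\andon{t}}<w_{\andon{n}}$ would force $w_{\andon{t+\ell-k}}<0^\infty$, which is impossible. If instead it is $z_{\ell-1}$, then $w_{\andon{t}}$ agrees with $w_{\andon{k}}$ on positions $1,\dots,\ell-k$, and $w_{\andon{k}}<w_{\andon{t}}$ would force $w_{\andon{t+\ell-k}}>w_{\andon{\ell}}$; but Lemma~\ref{lem:nlargest} ($w_{\andon{s}}<w_{\andon{\ell}}$ for every $s\neq\ell$, with equality when $s=\ell$) rules out any suffix strictly exceeding $w_{\andon{\ell}}$, again a contradiction.

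Everything substantive is already packaged in Lemma~\ref{lem:nlargest}; the rest is a standard lexicographic observation driven by the $+1$ adjacency between $w_{\andon{k}}$ and $w_{\andon{n}}$ at their first point of disagreement. The only step requiring care is the prefix-pinning reduction, but this is mechanical once the gap structure of the two bracketing suffixes has been made explicit.
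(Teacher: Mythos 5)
Your proof is correct and follows essentially the same route as the paper's: identify the shared prefix of $w_{\andon{k}}$ and $w_{\andon{n}}$ and the unit gap at position $\ell-k$, pin $w_{\andon{t}}$ to that prefix with the lower continuation $z_{\ell-1}$, and derive $w_{\andon{t+\ell-k}}>w_{\andon{\ell}}$, contradicting Lemma~\ref{lem:nlargest}. The paper compresses the two-case analysis ($z_{\ell-1}$ vs.\ $z_{\ell-1}{+}1$) into a single ``it follows that'' step, which you spell out explicitly, but the argument is the same.
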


\begin{proof}
Suppose that the result is false, so there is some $t$ such that
$$z_k z_{k+1}\cdots z_{n-1}\, z_{k} z_{k+1} \cdots z_{\ell-2}(z_{\ell-1}{+}1) 0^\infty <w_{\andon{t}}<z_{k} z_{k+1} \cdots z_{\ell-2}(z_{\ell-1}{+}1) 0^\infty.$$
It follows that $w_{\andon{t}}=z_{k} z_{k+1} \cdots z_{\ell-2} z_{\ell-1} w_{\andon{t+\ell-k}}$, and that
$$w_{\andon{\ell}}=z_\ell z_{\ell+1}\cdots z_{n-1}\, z_{k} z_{k+1} \cdots z_{\ell-2}(z_{\ell-1}{+}1) 0^\infty< w_{\andon{t+\ell-k}},$$
which contradicts Lemma~\ref{lem:nlargest}.
\end{proof}

\begin{proof}[Proof of Theorem~\ref{th:findw1}]
First we show that $w$ induces $\pi$, following the proof of Proposition~2.12 from~\cite{Elishifts}.
For $1\le i,j\le n$, let $S(i,j)$ be the statement $$\pi(i)<\pi(j) \ \mbox{ implies }\ w_{\andon{i}}<w_{\andon{j}}.$$ We want to prove $S(i,j)$ for all $1\le i,j\le n$ with $i\neq j$.
We do this considering three cases.

\bit \item {\it Case $i=n$}. Suppose that $\pi(n)<\pi(j)$, so in particular $j\ne k$. By Lemma~\ref{lem:noinbetween}, in order to prove that $w_{\andon{n}}<w_{\andon{j}}$
it is enough to show that $w_{\andon{k}}<w_{\andon{j}}$. Also, $\pi(n)<\pi(j)$ implies $\pi(k)<\pi(j)$. Thus, we have reduced $S(n,j)$ to $S(k,j)$.

\item {\it Case $j=n$}. Suppose that $\pi(i)<\pi(n)$. It is clear from the definition of $w$ that $w_{\andon{k}}<w_{\andon{n}}$. If $i=k$ we are done. If $i\ne k$, then $\pi(i)<\pi(n)$
implies that $\pi(i)<\pi(k)$, since $\pi(k)=\pi(n)-1$. So, if $S(i,k)$ holds, then $w_{\andon{i}}<w_{\andon{k}}<w_{\andon{n}}$, so $S(i,n)$ must hold as well.
We have reduced $S(i,n)$ to $S(i,k)$. Equivalently, $\neg S(i,n)\Rightarrow\neg S(i,k)$, where $\neg$ denotes negation.

\item {\it Case $i,j<n$}. Suppose that $\pi(i)<\pi(j)$. If $z_i<z_j$, then $w_{\andon{i}}<w_{\andon{j}}$ and we are done. If $z_i=z_j$, then we know by Lemma~\ref{lem:order} that
$\pi(i+1)<\pi(j+1)$. If we can show that $w_{\andon{i+1}}<w_{\andon{j+1}}$, then $w_{\andon{i}}=z_iw_{\andon{i+1}}<z_jw_{\andon{j+1}}=w_{\andon{j}}$.
So, we have reduced $S(i,j)$ to $S(i+1,j+1)$.
\eit

The above three cases show that for all $1\le i,j\le n-1$, $\neg S(i,j)\Rightarrow\neg S(g(i),g(j))$, where $g$ is defined for $1\le i\le n-1$ by
$$g(i)=\begin{cases} i+1 & \mbox{if } i<n-1, \\ k & \mbox{if } i=n-1.\end{cases}$$
Suppose now that for some $i,j$, $S(i,j)$ does not hold. Using the first two cases above, we can assume that $1\le i,j\le n-1$. Then, $S(g^q(i),g^q(j))$ fails for every $q\ge 1$.
Let $r$ be such that $\pi(r)$ is the maximum of $\pi(k),\pi(k+1),\dots,\pi(n-1)$. Let $q$ be such that $g^q(i)=r$
and $k\le g^q(j)\le n-1$.
Then $S(g^q(i),g^q(j))$ must hold because $\pi(g^q(i))\ge \pi(g^q(j))$. This is a contradiction, so we have proved that $w$ induces $\pi$.

To see that $\BW(w)=\BV(w_{\andon{\ell}})$, we use Lemmas~\ref{lem:nlargest} and~\ref{lem:z} to conclude that $w_{\andon{s}}\lesseqv w_{\andon{\ell}}$ for all $s$.

\ms

Assume now that some other word $v$ induces $\pi$. Let us show that $\BW(v)\ge\BW(w)$, which will also imply that $\B(\pi)=\BW(w)$. Suppose for contradiction that $\BW(v)<\BW(w)< N$.
By Proposition~\ref{prop:determined}, the first $n-1$ entries of $v$ are given by $\zeta=z_1z_2\dots z_{n-1}$, so we can write $v=\zeta y$.
Since $z_kz_{k+1}\dots z_{n-1}y=v_{\andon{k}}<v_{\andon{n}}=y$, we have $y>(z_kz_{k+1}\dots z_{n-1})^\infty$.
Consider the leftmost position where the words $y$ and $(z_kz_{k+1}\dots z_{n-1})^\infty$ differ. We can
assume that in that position, the difference between the corresponding entries is one, and that to the right of it $y$ has zeros only,
since this assumption cannot increase the value of $\BW(\zeta y)$. In other words,
$y = (z_{k} z_{k+1} \cdots z_{n-1})^t z_{k} z_{k+1} \cdots z_{i-1} (z_i{+}1) 0^\infty$ for some $t\ge0$ and $k\le i\le n-1$. For convenience, let $i'=t(n-k)+i$.
If $i'=\ell-1$, then $v=w$ and there is nothing to prove.
Assuming $i'\neq\ell-1$, we will find $j$ such that $w_{\andon{\ell}}\le v_{\andon{j}}$.

If $i'<\ell-1$, then $w_{\andon{\ell}}<v_{\andon{\ell}}$, and we take $j=\ell$.
If $i'>\ell-1$, let $j-1$ be the position of the rightmost copy of $z_{\ell-1}$ in $v$.
We then have $w_{\andon{\ell}}\le v_{\andon{j}}$.
In both cases, Lemma~\ref{lem:nlargest} implies that $$w_{\andon{s}}\le w_{\andon{\ell}}\le v_{\andon{j}}$$ for all $s$. By Lemma~\ref{lem:z}, $w_{\andon{s}}\lesseqv v_{\andon{j}}$ for all $s$, and thus
$\BW(w)\le \BV(v_{\andon{j}})\le \BW(v)$.
\end{proof}

We are left with the case where $n$ appears in $\pi$ to the left of the entry $\pi(n)-1$.

\begin{theorem} \label{th:findw2} Suppose that $c=\pi(n) \neq 1$. Let $k=\pi^{-1}(c-1)$ and $\ell=\pi^{-1}(n)$, and suppose that $\ell < k$.
Let $h$ be such that $\pi(h)$ is the maximum of $\pi(k+1),\pi(k+2),\dots,\pi(n)$. For each $m\ge0$, let
\beq\label{eq:w2}
w^{(m)} = z_1z_2\cdots z_{n-1}\, (z_{k} z_{k + 1} \cdots z_{n-1})^{m} z_k z_{k+1} \cdots z_{h-2}(z_{h-1}{+}1)0^\infty.
\eeq
Then $w^{(m)}$ induces $\pi$ for $m\ge \frac{n-2}{n-k}$, and for any word $v$ that induces $\pi$, there exists an $m_0$ such that $\BW(v)>\BW(w^{(m)})$ for $m\ge m_0$.
In particular, $$\B(\pi)=\lim_{m\rightarrow\infty}\BW(w^{(m)}).$$
Additionally, $\BW(w^{(m)})=\BV(w^{(m)}_{\andon{\ell}})$.
\end{theorem}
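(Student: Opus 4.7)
The strategy closely mirrors the proof of Theorem~\ref{th:findw1}, with the key new feature that no single word realizes the infimum: since the maximum position $\ell$ lies before the anchor $k$, the block $z_k z_{k+1}\cdots z_{n-1}$ must be repeated arbitrarily many times. The argument splits into four pieces: (i) $w^{(m)}$ induces $\pi$ for $m\ge(n-2)/(n-k)$; (ii) $\BW(w^{(m)})=\BV(w^{(m)}_{\andon{\ell}})$; (iii) for every $v$ inducing $\pi$, $\BW(v)>\BW(w^{(m)})$ for all sufficiently large $m$; (iv) the limit identity $\B(\pi)=\lim_m \BW(w^{(m)})$.

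Step (i) is the main technical core. I would reuse the implication chain from the proof of Theorem~\ref{th:findw1}: for each pair $1\le i,j\le n$ with $\pi(i)<\pi(j)$, reduce the statement $S(i,j)$ that ``$w^{(m)}_{\andon{i}}<w^{(m)}_{\andon{j}}$'' via three cases. The case $j=n$ reduces to $S(i,k)$ using the analogue of Lemma~\ref{lem:noinbetween} (no shift of $w^{(m)}$ is lex-strictly between $w^{(m)}_{\andon{k}}$ and $w^{(m)}_{\andon{n}}$, thanks to the periodic block structure); $i=n$ reduces to $S(k,j)$ analogously; and $i,j<n$ reduces, when $z_i=z_j$, to $S(i+1,j+1)$ via Lemma~\ref{lem:order}, and is trivial when $z_i<z_j$. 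The reduction function $g$ with $g(i)=i+1$ for $i<n-1$ and $g(n-1)=k$ now cycles through $\{k,k+1,\ldots,n-1\}$; iteration terminates when some $g^q$ lands on the position $r$ where $\pi$ restricted to $\{k,\ldots,n-1\}$ attains its maximum, at which point the implication holds trivially. The bound $m\ge(n-2)/(n-k)$ is precisely the number of block copies needed so that the at most $n-2$ iterates of $g$ stay within the structured part of $w^{(m)}$. Step (ii) then follows from the Lemma~\ref{lem:nlargest} analogue $w^{(m)}_{\andon{s}}<w^{(m)}_{\andon{\ell}}$ for $s\ne\ell$, proved by propagating an assumed $T(j,r)$ failure through the same cyclic reduction $g$, and invoking Lemma~\ref{lem:z}.

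For step (iii), by Proposition~\ref{prop:determined} any $v$ inducing $\pi$ has the form $v=\zeta y$; the inequality $\pi(k)<\pi(n)$ forces $v_{\andon{k}}<v_{\andon{n}}$, whence $y>(z_k z_{k+1}\cdots z_{n-1})^\infty$ lexicographically. Let $p$ be the first position of deviation: for every $m$ such that the pure-block portion of $w^{(m)}$ still covers position $p$, $v_{\andon{\ell}}$ and $w^{(m)}_{\andon{\ell}}$ agree on a common prefix and then $v_{\andon{\ell}}$ is lex-strictly greater. Combining with step (ii) and Lemma~\ref{lem:z} applied with $z=v_{\andon{\ell}}$ yields $\BW(w^{(m)})\le\BV(v_{\andon{\ell}})\le\BW(v)$. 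To upgrade to strict inequality, I would show $\BW(w^{(m)})$ is strictly decreasing in $m$: a direct computation gives $f_{w^{(m+1)}_{\andon{\ell}}}(\beta_m)=1-(1-f_U(\beta_m))/\beta_m^{P_m}$, where $\beta_m=\BW(w^{(m)})$, $P_m$ is the position of the ``$+1$'' in $w^{(m)}_{\andon{\ell}}$, and $U=z_h z_{h+1}\cdots z_{n-1}z_k z_{k+1}\cdots z_{h-2}(z_{h-1}+1)0^\infty$. Since $U$ is a suffix of $w^{(m)}$ distinct from the maximal shift, step (ii) gives $U<w^{(m)}_{\andon{\ell}}$ lex; the strict inequality $\BV(U)<\beta_m$ then follows by combining Lemmas~\ref{lem:compare} and~\ref{lem:z} with the specific ending structure shared by $U$ and $w^{(m)}_{\andon{\ell}}$.

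Finally, step (iv) is a sandwich: each $w^{(m)}$ induces $\pi$ so $\B(\pi)\le\BW(w^{(m)})$ for every $m$, giving $\B(\pi)\le\liminf_m\BW(w^{(m)})$; conversely, step (iii) yields $\limsup_m\BW(w^{(m)})\le\BW(v)$ for every $v$ inducing $\pi$, so $\limsup_m\BW(w^{(m)})\le\B(\pi)$ by Proposition~\ref{prop:BBW}, forcing the limit to exist and equal $\B(\pi)$. The main obstacle is step (i): the cyclic nature of $g$ means the reduction does not terminate in one pass, and a delicate position-by-position accounting is needed to verify that $(n-2)/(n-k)$ block copies suffice, with particular care required at the interface between the $m$-fold block repetition and the terminal partial block $z_k\cdots z_{h-2}(z_{h-1}+1)$. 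The choice of $h$ as the argmax of $\pi$ on $\{k+1,\ldots,n\}$ is exactly what makes this terminal step of the reduction consistent.
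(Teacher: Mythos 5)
Your overall architecture is sound and matches the paper in spirit: step (i) (inducing $\pi$ via the cyclic reduction and the bound $(n-2)/(n-k)$), step (ii) ($\BW(w^{(m)})=\BV(w^{(m)}_{\andon{\ell}})$ via the Lemma~\ref{lem:nlargest2} analogue and Lemma~\ref{lem:z}), and step (iv) (the sandwich with Proposition~\ref{prop:BBW}) are all essentially the paper's argument. For step (i) the paper simply defers to \cite{Elishifts}, noting that the proof there carries over verbatim for $w^{(m)}$ once one replaces the terminal $(N{-}1)^\infty$ with $0^\infty$; your reconstruction of that reduction via $g$ and the choice of $h$ is consistent with what is done there.

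The genuine gap is in step (iii). You take a detour, trying to prove that $\BW(w^{(m)})$ is strictly decreasing in $m$, and the key sub-claim of that detour — that $\BV(U)<\beta_m$ where $U$ is the suffix you write down — is under-justified. You cite Lemmas~\ref{lem:compare} and~\ref{lem:z} for this, but Lemma~\ref{lem:z} only delivers the non-strict bound $\BV(U)\le\beta_m$, and $\BV$ is not injective (e.g.\ $\BV((N{-}1)^\infty)=\BV(N0^\infty)=N$), so lexicographic strictness $U<w^{(m)}_{\andon{\ell}}$ does not by itself force $\BV(U)<\BV(w^{(m)}_{\andon{\ell}})$; and Lemma~\ref{lem:compare} runs in the wrong direction (it assumes a $\lessv$ relation as hypothesis rather than producing one). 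The ingredient you are missing is Lemma~\ref{lem:wlbetaexp}: $w^{(m)}_{\andon{\ell}}$ is the $\beta$-expansion of $\beta_m=\BV(w^{(m)}_{\andon{\ell}})$. Once you have that, Lemma~\ref{lem:a}(iii) instantly gives $a_{\andon{k}}\lessv a$ for every proper shift $a_{\andon{k}}$ of a $\beta$-expansion $a$, which is exactly $\BV(U)<\beta_m$ since $U$ is a proper shift of $w^{(m)}_{\andon{\ell}}$. Alternatively, and this is what the paper actually does, you can bypass the monotonicity detour entirely: from $w^{(m)}_{\andon{\ell}}<v_{\andon{\ell}}$ (for $m>t$), Lemma~\ref{lem:wlbetaexp} plus the contrapositive of Lemma~\ref{lem:a}(i) directly yield $w^{(m)}_{\andon{\ell}}\lessv v_{\andon{\ell}}$, hence $\BW(w^{(m)})=\BV(w^{(m)}_{\andon{\ell}})<\BV(v_{\andon{\ell}})\le\BW(v)$. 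Either way, Lemma~\ref{lem:wlbetaexp} is indispensable here; your write-up never invokes it, and that is what makes your strict inequality claim incomplete.
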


Before proving this result, let us recall equation~(\ref{eq:boundz}), and show that $z_{h-1}<N-1$ in this case.
Indeed, if $z_{h-1}=N-1$ and $h<n$, using that $z_h\le z_{h-1}$ and $\pi(h)>\pi(h-1)$, Lemma~\ref{lem:order} would imply that $\pi(h+1)>\pi(h)$,
which contradicts the choice of $h$. And if $z_{h-1}=N-1$ and $h=n$, then any word $z$ starting with $\zeta$ and satisfying $z_{\andon{h-1}}<z_{\andon{h}}$ (a necessary condition for $z$ to induce $\pi$) would
need to have some entry $z_i\ge N$, contradicting the definition of $N$ and Proposition~\ref{prop:determined}.
From the fact that $z_{h-1}<N-1$ we conclude that $w^{(m)}\in\WW_N$ and so $\BW(w^{(m)})< N$ (this inequality is strict because $w^{(m)}$ ends with $0^\infty$).

In Lemma~\ref{lem:nlargest2} below, the notation is the same as in the statement of
Theorem~\ref{th:findw2}. The result is analogous to Lemma~\ref{lem:nlargest}, and the proof uses very similar ideas.

\begin{lemma}\label{lem:nlargest2}
For all $m\ge0$ and $s\neq \ell$, we have $w^{(m)}_{\andon{s}}<w^{(m)}_{\andon{\ell}}$.
\end{lemma}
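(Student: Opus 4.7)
My plan is to follow the template of the proof of Lemma~\ref{lem:nlargest}, suitably adapted to the periodic setting. Set $d = m(n-k)+(h-k)-1$, extend $\pi$ to the ghost positions $n+1,\dots,n+d$ by the rule $\pi(n+q)=\pi(k+(q \bmod (n-k)))$, and assume for contradiction that $w^{(m)}_{\andon{\ell}}\le w^{(m)}_{\andon{s}}$ for some $s\ne\ell$. Equality is impossible because the boosted entry $z_{h-1}+1$ occurs at a unique position, and we may restrict to $s\le n+d$ since beyond that $w^{(m)}_{\andon{s}}=0^\infty$. Let $T(j,r)$ denote the statement ``$w^{(m)}_{\andon{j}}<w^{(m)}_{\andon{r}}$ and $\pi(j)>\pi(r)$'', so that the assumption is $T(\ell,s)$.

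The first preliminary step is the inequality $w^{(m)}_{\andon{k}}<w^{(m)}_{\andon{n}}$, which I would obtain by direct comparison: both suffixes coincide on a long initial stretch because they are shifts of a word of period $z_k\cdots z_{n-1}$, but at the word-position where $w^{(m)}_{\andon{n}}$ places its boosted entry $z_{h-1}+1$, the suffix $w^{(m)}_{\andon{k}}$ is still inside a genuine period and carries only $z_{h-1}<z_{h-1}+1$. This is exactly the ingredient needed to push Case~A of the iteration through.

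The iteration then runs with three cases, mirroring those of Lemma~\ref{lem:nlargest}: Case~A ($j=n$) uses the preliminary inequality together with $\pi(k)=c-1<c=\pi(n)$ to reduce $T(n,r)$ to $T(k,r)$; Case~B ($j<n$ and $r<n+d$) steps $T(j,r)\to T(j+1,r+1)$ via Lemma~\ref{lem:order} applied to the two first entries, with the ghost extension of $\pi$ chosen so that $\pi(j)>\pi(r)$ transfers correctly across both the boundary $r=n-1\to n$ and the internal period wraps $r=n+t(n-k)-1\to n+t(n-k)$; and Case~C at $r=n+d$ yields $z_j\le z_{h-1}$ from the first-entry comparison (since $w^{(m)}_{n+d}=z_{h-1}+1$) together with $\pi(j)>\pi(h-1)$, so that Lemma~\ref{lem:order} delivers $z_j=z_{h-1}$ and $\pi(j+1)>\pi(h)$.

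The main obstacle, and the essential new feature compared with Lemma~\ref{lem:nlargest}, is that $\pi(h)$ is no longer guaranteed to equal $n$, so the inequality $\pi(j+1)>\pi(h)$ is not immediately impossible. I would resolve this by locating $j$ at the terminal step as a function of $q=n+d-s$. If the trajectory has undergone a Case~A reset (equivalently $q\ge n-\ell$), or more generally if $j+1\in\{k+1,\dots,n\}$, then $\pi(j+1)\le\pi(h)$ by maximality of $\pi(h)$ on $\{\pi(k+1),\dots,\pi(n)\}$, giving the desired contradiction; if $j+1=k$, then $\pi(k)=c-1<c\le\pi(h)<\pi(j+1)=c-1$, again a contradiction. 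The remaining delicate sub-case, which I expect to be the hard part of the proof, is when $j+1\in\{\ell+1,\dots,k-1\}$, corresponding to $s$ in the short interval $n+d+\ell-k+1<s\le n+d$; here I would extract from the successive applications of Lemma~\ref{lem:order} along the iteration the block-equality $z_{\ell+i}=z_{h-1-q+i}$ for $0\le i\le q$, and combine this with the observation that $z_\ell$ equals the total bar-count in the bar-insertion construction of $\zeta$ and is therefore maximal among $z_1,\dots,z_{n-1}$, to conclude that the assumed strict inequality $w^{(m)}_{\andon{\ell}}<w^{(m)}_{\andon{s}}$ already fails at the leading entries, yielding the final contradiction.
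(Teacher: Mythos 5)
The proposal follows the paper's template closely: the same statement $T(j,r)$, the same four-way case split driven by Lemma~\ref{lem:order}, and the same terminal observation that $\pi(j+1)>\pi(h)$ forces $j<k$. However, there are two concrete problems.

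First, the ghost extension is wrong at the period boundaries. The rule $\pi(n+q)=\pi(k+(q\bmod(n-k)))$ gives $\pi(r)=\pi(k)$ whenever $r>n$ and $r\equiv n\pmod{n-k}$, whereas the paper's definition assigns $\pi(r)=\pi(n)$ there (keeping $\pi(r)=\pi(k+i)$ only for $1\le i\le n-1-k$). This distinction is load-bearing for Case~A: from $T(n,r)$ one wants to deduce $\pi(k)>\pi(r)$, and the paper's justification is precisely that $\pi(n)>\pi(r)$ forces $\pi(r)\le c-1=\pi(k)$ with equality only when $r=k$. With your extension, $\pi(r)=\pi(k)$ also occurs at all wrap-around ghost positions, so the strictness $\pi(k)>\pi(r)$ does not follow and the reduction $T(n,r)\Rightarrow T(k,r)$ fails. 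One would then need a separate argument that such pairs never arise in the chain, which you do not supply.

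Second, and more seriously, your treatment of the delicate sub-case $j+1\in\{\ell+1,\dots,k-1\}$ is not correct. You derive the block-equality $z_{\ell+i}=z_{h-1-q+i}$ for $0\le i\le q$ from the successive applications of Lemma~\ref{lem:order}, and then claim this plus the maximality of $z_\ell$ shows the assumed inequality $w^{(m)}_{\andon{\ell}}<w^{(m)}_{\andon{s}}$ ``already fails at the leading entries.'' But the block-equality says exactly that the first $q$ entries of the two suffixes agree and that at entry $q+1$ one has $z_{\ell+q}=z_{h-1}<z_{h-1}+1=w^{(m)}_R$; this is perfectly \emph{consistent} with the assumed strict inequality, not in contradiction with it. So no contradiction has been produced in this sub-case, and the proof is incomplete precisely at the step you flag as the hard part. (It is fair to note that the paper itself is terse at this point, asserting that $T(j,R)$ must hold ``for some $j\ge k$'' without spelling out why the iteration cannot terminate with $j<k$ when $R-s<k-\ell$; but your proposed fix, as written, does not close that gap either.)
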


\begin{proof}
To simplify notation, let $w=w^{(m)}$ in this proof.
Let $R=n-1+m(n-k)+h-k$ the position of the last nonzero entry in $w$, so $w_R=z_{h-1}{+}1$.
For each $r$ with $n< r\le R$, which can be written uniquely as
$r=n+q(n-k)+i$ with $0\le q\le m$ and $0\le i\le n-1-k$,
define for convenience $\pi(r)=\pi(k+i)$ if $1\le i\le n-1-k$, and $\pi(r)=\pi(n)$ if $i=0$.
Note that $w_{r}=w_{k+i}=z_{k+i}$ for $n<r<R$.

Suppose for contradiction that there is some $s\neq \ell$ such that $w_{\andon{\ell}}\le w_{\andon{s}}$. Note that we cannot have $w_{\andon{\ell}}=w_{\andon{s}}$ because the position of the last nonzero entry $z_{h-1}{+}1$ is different in the two words, so we must have $w_{\andon{\ell}}< w_{\andon{s}}$. Note also that $s\le R$. Since $\ell<k$ and thus none of the extended values of $\pi$ equals $n$, we have $\pi(s)<n=\pi(\ell)$.
For any $1\le j\le n$ and $1\le r\le R$, let $T(j,r)$ be the statement
$$w_{\andon{j}}< w_{\andon{r}} \ \mbox{ and }\ \pi(j)>\pi(r).$$
Our assumption implies that $T(\ell,s)$ holds.

Suppose now that $T(j,r)$ holds for certain $j,r$. Consider the following cases.
\bit
\item If $j=n$, then $w_{\andon{n}}< w_{\andon{r}}$ and $\pi(n)=c>\pi(r)$. From the definition of $w=w^{(m)}$, we have $w_{\andon{k}}<w_{\andon{n}}$. This implies that $w_{\andon{k}}<w_{\andon{r}}$
and $\pi(k)=c-1\ge\pi(r)$. Note also that the last inequality must be strict, otherwise $r=k$, which is impossible because $w_{\andon{k}}<w_{\andon{r}}$. It follows that $T(k,r)$ holds in this case.
\item If $j<n$ and $r<n$, then the first letters of $w_{\andon{j}}$ and $w_{\andon{r}}$ are $z_j$ and $z_r$, respectively, and $z_j\le z_r$.
By Lemma~\ref{lem:order}, $\pi(j+1)>\pi(r+1)$ and $z_j=z_r$, from where it also follows that $w_{\andon{j+1}}< w_{\andon{r+1}}$. Thus $T(j+1,r+1)$ holds.
\item If $j<n$ and $n\le r<R$, we write $r=n+q(n-k)+i$ with $0\le q\le m$ and $0\le i\le n-1-k$ as before.
The first letters of $w_{\andon{j}}$ and $w_{\andon{r}}$ are $z_j$ and $z_{k+i}$, respectively, and $z_j\le z_{k+i}$. Also, $\pi(j)>\pi(r)\ge\pi(k+i)$, with the last inequality being strict only when $i=0$.
By Lemma~\ref{lem:order}, $\pi(j+1)>\pi(k+i+1)=\pi(r+1)$ and $z_j=z_{k+i}$, from where it also follows that $w_{\andon{j+1}}< w_{\andon{r+1}}$. Thus $T(j+1,r+1)$ holds.
\item If $j<n$ and $r=R$, then $w_{\andon{r}}=(z_{h-1}{+}1)0^\infty$, so the fact that $w_{\andon{j}}< w_{\andon{r}}$ implies that $w_j=z_j\le z_{h-1}$. We also have $\pi(j)>\pi(r)=\pi(h-1)$.
By Lemma~\ref{lem:order}, $\pi(j+1)>\pi(h)$, which, by the choice of $h$, can only hold if $j<k$.
\eit
We have shown that for $r<R$,
$T(j,r)$ implies $T(j+1,r+1)$ if $j<n$, and it implies $T(k,r)$ if $j=n$. It follows that if $T(\ell,s)$ holds,
then $T(j,R)$ must hold for some $j$ with $j\ge k$. But this leads to a contradiction, as shown in the last of the above cases, thus
concluding our proof.
\end{proof}

\begin{proof}[Proof of Theorem~\ref{th:findw2}]
To see that $w^{(m)}$ induces $\pi$ for $m$ large enough, we refer the reader to~\cite{Elishifts}. It is shown there that
the word $z_1z_2\cdots z_{n-1}\, (z_{k} z_{k + 1} \cdots z_{n-1})^{m} z_k z_{k+1} \cdots z_{h-1}(N{-}1)^\infty$ (which is denoted by $w_B(\pi)$ in~\cite{Elishifts}) induces $\pi$ when $m\ge\frac{n-2}{n-k}$.
The proof that $w^{(m)}$ induces $\pi$ is identical, the main ingredients being the fact that the word $z_{k} z_{k + 1} \cdots z_{n-1}$ is primitive, and the analogue of Lemma~\ref{lem:noinbetween} for this case.

If $m$ is such that $w^{(m)}$ induces $\pi$, we get from Lemmas~\ref{lem:nlargest2} and~\ref{lem:z} that $w^{(m)}_{\andon{s}}\lesseqv w^{(m)}_{\andon{\ell}}$ for all $s$, so $\BW(w^{(m)})=\BV(w^{(m)}_{\andon{\ell}})$.

Assume now that some word $v$ induces $\pi$, and suppose for contradiction that $\BW(v)\le\BW(w^{(m)})<N$ for arbitrarily large $m$. In particular
$v\in\WW_N$, so by Proposition~\ref{prop:determined}, $v$ must start with $\zeta$, so we can write $v=\zeta y$.
Since $z_kz_{k+1}\dots z_{n-1}y=v_{\andon{k}}<v_{\andon{n}}=y$, we have $y>(z_kz_{k+1}\dots z_{n-1})^\infty$.
Consider the leftmost position where these two words differ. We can
assume that in that position, the difference between the corresponding entries is one, and that to the right of it $y$ has zeros only,
since this assumption cannot increase the value of $\BW(v)$.
In other words, $y = (z_{k} z_{k+1} \cdots z_{n-1})^t z_{k} z_{k+1} \cdots z_{i-1} (z_i{+}1) 0^\infty$ for some $t\ge0$, $k\le i\le n-1$.
Now, taking any $m>t$, we have that $w^{(m)}_{\andon{\ell}}<v_{\andon{\ell}}=z_\ell z_{\ell+1} \dots z_{n-1}y$.
Using Lemmas~\ref{lem:nlargest2} and~\ref{lem:z}, we can argue as in the proof of Theorem~\ref{th:findw2} to show that
$\BW(w^{(m)})\le \BV(v_{\andon{\ell}})\le \BW(v)$.
However, to prove the strict inequality $\BW(w^{(m)})<\BW(v)$, we use the fact that $w^{(m)}_{\andon{\ell}}$ is the $\beta$-expansion of $\beta=\BV(w^{(m)}_{\andon{\ell}})$, which is proved in Lemma~\ref{lem:wlbetaexp} below.
This property of $w^{(m)}_{\andon{\ell}}$, combined with Lemma~\ref{lem:a}(i) and the fact that $w^{(m)}_{\andon{\ell}}< v_{\andon{\ell}}$, implies that $w^{(m)}_{\andon{\ell}}\lessv v_{\andon{\ell}}$.
We conclude that $$\BW(w^{(m)})=\BV(w^{(m)}_{\andon{\ell}})<\BV(v_{\andon{\ell}})\le \BW(v).$$
\end{proof}

Several examples of applications of the above results are given in Section~\ref{sec:examples}. Section~\ref{sec:Bpi} deals with the problem of computing $\BW(w)$ and $\lim_{m\rightarrow\infty} \BV(w^{(m)})$, where $w$ and $w^{(m)}$ are the above words. Let us first prove a property of these words.

\begin{lemma}\label{lem:wlbetaexp}
Let $c=\pi(n)$, $\ell=\pi^{-1}(n)$, and if $c\neq1$, let $k=\pi^{-1}(c-1)$. Let
$$u=\begin{cases} w_{\andon{\ell}}, & \mbox{where $w$ is given by equation~(\ref{eq:w0}), if $c=1$,} \\
w_{\andon{\ell}}, & \mbox{where $w$ is given by equation~(\ref{eq:w1}), if $c\neq1$ and $\ell>k$,} \\
w^{(m)}_{\andon{\ell}} & \mbox{for any fixed $m\ge0$, where $w^{(m)}$ is given by equation~(\ref{eq:w2}), if $c\neq1$ and $\ell<k$.}\end{cases}$$
Then $u$ is the $\beta$-expansion of $\beta=\BV(u)$.
\end{lemma}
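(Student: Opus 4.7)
The plan is to verify directly that $u$ is produced by the greedy algorithm defining the $\beta$-expansion of $\beta=\BV(u)$. Setting $B_k:=\beta\, f_{u_{\andon{k}}}(\beta)$, the relations $B_1=\beta$ (immediate from $f_u(\beta)=1$) and $B_{k+1}=\beta(B_k-u_k)$ imply, by induction on $k$, that proving the lemma reduces to showing $u_k=\lfloor B_k\rfloor$ for every $k\ge 1$. Since $B_k-u_k=f_{u_{\andon{k+1}}}(\beta)\ge 0$ automatically, this in turn reduces to establishing the strict bound $f_{u_{\andon{j}}}(\beta)<1$ --- equivalently $u_{\andon{j}}\lessv u$ --- for every $j\ge 2$.

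First I would verify that in each of the three cases defining $u$, the strict lexicographic inequality $u_{\andon{j}}<u$ holds for all $j\ge 2$: for $c=1$ this is observed inside the proof of Proposition~\ref{prop:c1}, for $c\ne 1$ with $\ell>k$ it is exactly Lemma~\ref{lem:nlargest}, and for $c\ne 1$ with $\ell<k$ it is Lemma~\ref{lem:nlargest2}. Combined with the trivial $u_{\andon{1}}=u$, applying Lemma~\ref{lem:z} with $z=u$ then yields the weak bound $s_j:=f_{u_{\andon{j}}}(\beta)\le 1$ for every $j\ge 1$.

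The main obstacle is upgrading this weak bound to the strict inequality $s_j<1$ for $j\ge 2$. For this I would revisit inequality~(\ref{eq:sj}) from the proof of Lemma~\ref{lem:z}, which in the present notation reads $s_j\le 1-\beta^{-k_j}+\beta^{-k_j}s_{j+k_j}$, where $k_j\ge 1$ is the first position at which $u_{\andon{j}}$ and $u$ differ. Thus $s_j=1$ would force $s_{j+k_j}\ge 1$, and combined with the already established $s_{j+k_j}\le 1$, this forces $s_{j+k_j}=1$. Iterating produces a strictly increasing chain $j=j_0<j_1<j_2<\cdots$ with $s_{j_i}=1$ for every $i$. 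The key point is that in each of the three constructions $u$ ends in $0^\infty$ after a finite prefix, so once $j_i$ exceeds the position of the last nonzero entry of $u$ we have $u_{\andon{j_i}}=0^\infty$ and hence $s_{j_i}=0$, contradicting $s_{j_i}=1$. Therefore $s_j<1$ for every $j\ge 2$, which by the reduction in the first paragraph completes the proof that $u$ is the $\beta$-expansion of $\beta=\BV(u)$.
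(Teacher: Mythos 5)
Your proof is correct, but it takes a genuinely different route from the paper's. The paper simply cites the classical characterization from Parry and Schmeling --- that $u$ is the $\beta$-expansion of some $\beta$ if and only if $u_{\andon{i}}<u$ for all $i>1$, with $\beta=\BV(u)$ --- and then verifies this lexicographic condition via Lemma~\ref{lem:nlargest} (resp.\ Lemma~\ref{lem:nlargest2}, resp.\ the observation inside Proposition~\ref{prop:c1}). You reach the same lexicographic condition, but instead of invoking the Parry--Schmeling theorem you re-derive the needed direction from the paper's own tools: you reduce the claim to showing $f_{u_{\andon{j}}}(\beta)<1$ for $j\ge2$, get the weak bound from Lemma~\ref{lem:z}, and then cleverly upgrade to strictness by observing that equality would propagate forever along the chain from inequality~(\ref{eq:sj}), while $u$ is eventually $0^\infty$ so the chain must hit $s_{j_i}=0$. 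This buys self-containment (no external citation) at the cost of a few more lines; the key extra observation --- that all three constructions of $u$ terminate in $0^\infty$, which forecloses the equality case --- is exactly what makes the strictness argument go through, and is a nice touch that the paper avoids needing by the black-box citation. Both proofs rely equally on $\beta>1$ (which is implicitly assumed, and holds in all the contexts where the lemma is invoked).
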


\begin{proof}
It is proved in~\cite{Par,Schm} that a word $u=u_1u_2\dots$ is a $\beta$-expansion of some $\beta$ if and only if $u_{\andon{i}}<u$ for all $i>1$, and that in this case $\beta$ is unique (in fact, $\beta=\BV(u)$).
In the first of the three above cases, it is clear that $u$ has this property from its definition and the fact that $w$ induces $\pi$.
In the second and third cases, the fact that $u_{\andon{i}}<u$ for all $i>1$ follows from Lemmas~\ref{lem:nlargest} and~\ref{lem:nlargest2}, respectively.
\end{proof}

We end this section looking in more detail at the phase transitions where new patterns become allowed for $\beta$-shifts, and discussing the relationship between $\B(\pi)$ and $N(\pi)$.

\begin{proposition}\label{prop:open}
For every $\pi\in\S_n$,
$$\pi\notin \Al(\Sigma_{\B(\pi)}).$$ In particular, the infimum in Definition~\ref{def:Bpi} is never a minimum, and the shift-complexity of $\pi$ is the maximum $\beta$ such that $\pi$ is a forbidden pattern of $\Sigma_\beta$.
\end{proposition}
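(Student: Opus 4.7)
\textit{Proof plan.} I will argue by contradiction. Suppose some $v\in\W(\beta)$ induces $\pi$, where $\beta=\B(\pi)$, and let $a$ denote the $\beta$-expansion of $\beta$. Combining the definition of $\W(\beta)$ in equation~(\ref{eq:defW}) with Propositions~\ref{prop:WBW} and~\ref{prop:BBW} forces $v_{\andon{k}}<a$ for every $k\ge1$ and $\BW(v)=\beta$. The proof then splits into the three structural cases from Section~\ref{sec:perm2word}.

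Two of the cases are almost immediate. If $\pi(n)=1$, Proposition~\ref{prop:c1} asserts a \emph{strict} inequality $\BW(v')>\BW(w)=\beta$ for every word $v'\ne w=\zeta 0^\infty$ inducing $\pi$, so $v=w$; but Lemma~\ref{lem:wlbetaexp} identifies $w_{\andon{\ell}}$ with $a$, and $v_{\andon{\ell}}=a$ contradicts $v_{\andon{k}}<a$. If $\pi(n)\ne1$ and $\ell<k$, then Theorem~\ref{th:findw2} already gives $\BW(v)>\BW(w^{(m)})$ for all sufficiently large $m$; since each such $w^{(m)}$ induces $\pi$, Proposition~\ref{prop:BBW} gives $\BW(w^{(m)})\ge\beta$, hence $\BW(v)>\beta$, contradicting $\BW(v)\le\beta$.

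The main obstacle is the middle case $\pi(n)\ne1$ with $\ell>k$, since Theorem~\ref{th:findw1} only gives the weak inequality $\BW(v)\ge\BW(w)$ and so cannot on its own pin $v$ down. My plan is to reapply the pointwise-monotone modification used inside the proof of Theorem~\ref{th:findw1}. Write $v=\zeta y$; the fact that $v$ induces $\pi$ gives $y>(z_kz_{k+1}\cdots z_{n-1})^\infty$. Let $y'$ be obtained from $y$ by keeping the entries unchanged up to the leftmost position where $y$ exceeds $(z_kz_{k+1}\cdots z_{n-1})^\infty$, reducing the excess there to exactly $1$, and setting every later entry to $0$; set $v'=\zeta y'$. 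Because $v'\le v$ entrywise, each shift satisfies $v'_{\andon{k}}\le v_{\andon{k}}<a$, so $v'$ still lies in $\W(\beta)$; at the same time $y'$ now has the canonical form $(z_k\cdots z_{n-1})^tz_k\cdots z_{i-1}(z_i{+}1)0^\infty$ for which the last paragraph of the proof of Theorem~\ref{th:findw1} is designed. That analysis produces either $v'=w$ (so $v'_{\andon{\ell}}=w_{\andon{\ell}}=a$ by Lemma~\ref{lem:wlbetaexp}) or an index $j$ with $w_{\andon{\ell}}\le v'_{\andon{j}}$; either way $v'_{\andon{j}}\ge a$, contradicting $v'_{\andon{j}}<a$.

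The delicate point is verifying that the modification simultaneously preserves $\W(\beta)$-membership and lands $v'$ in the canonical form required to extract $j$. Throughout, Lemma~\ref{lem:wlbetaexp} is what turns the minimizing-word construction of Section~\ref{sec:perm2word} into a genuine obstruction, by identifying $w_{\andon{\ell}}$ with the $\beta$-expansion $a$.
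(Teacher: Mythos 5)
Your argument is correct and follows essentially the same route as the paper: the same three-case split by the position of $n$ relative to $\pi(n)-1$, the same use of the uniqueness in Proposition~\ref{prop:c1}, the strict inequality in Theorem~\ref{th:findw2}, the canonical-form reduction embedded in the proof of Theorem~\ref{th:findw1}, and Lemma~\ref{lem:wlbetaexp} to identify $w_{\andon{\ell}}$ with the $\beta$-expansion $a$. The only minor organizational difference is in the case $\ell>k$: you pass to the entrywise-smaller canonical word $v'$, observe $v'\in\W(\beta)$ by entrywise monotonicity, and contradict $v'_{\andon{j}}<a$ directly, whereas the paper keeps the original $v$ with $\BW(v)=\beta$, extracts an index $j$ with $w_{\andon{\ell}}\le v_{\andon{j}}$, and dispatches the strict-inequality subcase via Lemma~\ref{lem:a}(i) -- the two versions rest on the same lemmas and are logically interchangeable.
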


\begin{proof}
Let $\beta=\B(\pi)$.
The statement $\pi\notin \Al(\Sigma_{\beta})$ is equivalent to the fact that there is no word in $\W(\beta)$, the domain of $\Sigma_\beta$, that induces $\pi$. Any word $v$ that induces $\pi$ has $\BW(v)\ge\beta$.
If $\BW(v)>\beta$, then clearly $v\notin\W(\beta)$.
We will show that for every word $v$ inducing $\pi$ and with $\BW(v)=\beta$, there is some $j$ such that $v_{\andon{j}}$ is the $\beta$-expansion of $\beta$. This will imply that
$v\notin\W(\beta)$ by equation~(\ref{eq:part2}) from Proposition~\ref{prop:WBW}, concluding the proof.

Let $c,\ell,k$ be defined as in Lemma~\ref{lem:wlbetaexp}.
If $c=1$, then by Proposition~\ref{prop:c1} there is only one word $w$ inducing $\pi$ with $\BW(w)=\beta$, namely the word given by equation~(\ref{eq:w0}). By Lemma~\ref{lem:wlbetaexp},
$w_{\andon{\ell}}$ is then the $\beta$-expansion of $\beta$.

Suppose now that $c\neq1$ and $\ell>k$. The last part of the proof of Theorem~\ref{th:findw1} shows that if $w$ is given by equation~(\ref{eq:w1}) and $v$ is any other word inducing $\pi$,
then there is some $j$ such that $w_{\andon{\ell}}\le v_{\andon{j}}$. By Lemma~\ref{lem:wlbetaexp}, $w_{\andon{\ell}}$ is the $\beta$-expansion of $\beta$.
Thus, if $v_{\andon{j}}=w_{\andon{\ell}}$, then $v\notin\W(\beta)$.
If $w_{\andon{\ell}}<v_{\andon{j}}$, then Lemma~\ref{lem:a}(i) implies that $w_{\andon{\ell}}\lessv v_{\andon{j}}$, from where
$\beta=\BW(w)=\BV(w_{\andon{\ell}})<\BV(v_{\andon{j}})\le \BW(v)$, so $v\notin\W(\beta)$ in this case.

Finally, if $c\neq1$ and $\ell<k$, Theorem~\ref{th:findw2} states that for any word $v$ inducing $\pi$, we have $\BW(v)>\BW(w^{(m)})\ge \beta$ for $m$ large enough, where $w^{(m)}$ is given by equation~(\ref{eq:w1}), so
$v\notin\W(\beta)$ again.

The last sentence of the proposition is an easy consequence of Corollary~\ref{cor:inclal} and Definition~\ref{def:Bpi}.
\end{proof}

One can rephrase Proposition~\ref{prop:open} by stating that $\pi\in\Al(\Sigma_\beta)$ if and only if $\beta>\B(\pi)$.
It follows from this observation and the definition of $N(\pi)$ (see equation~(\ref{def:N})) that
\beq\label{eq:BN}N(\pi)=\fl{\B(\pi)}+1.\eeq

\section{Computation of $\B(\pi)$: the equations}\label{sec:Bpi}

In this section we find the shift-complexity of an arbitrary permutation $\pi$ by expressing it as the unique real root greater than one of a certain polynomial $P_\pi(\beta)$.
Given a finite word $u_1u_2\dots u_r$, define the polynomial
$$p_{u_1u_2\dots u_r}(\beta)=\beta^r-u_1\beta^{r-1}-u_2\beta^{r-2}-\dots-u_r.$$

\begin{theorem}\label{th:Bpi}
For any $\pi\in\S_n$ with $n\ge2$, let $\zeta=\zeta(\pi)=z_1z_2\dots z_{n-1}$ as defined in Section~\ref{sec:perm2word}. Let $c=\pi(n)$, $\ell=\pi^{-1}(n)$, and if $c\neq 1$, let $k=\pi^{-1}(c-1)$.
Define a polynomial $P_\pi(\beta)$ as follows.
 If $c=1$, let $$P_\pi(\beta)=p_{z_\ell z_{\ell+1}\dots z_{n-1}}(\beta);$$
if $c\neq 1$ and $\ell > k$, let $$P_\pi(\beta)=p_{z_\ell z_{\ell+1}\dots z_{n-1}z_k z_{k+1}\dots z_{\ell-1}}(\beta)-1;$$
if $c\neq 1$ and $\ell < k$, let $$P_\pi(\beta)=\begin{cases} p_{z_\ell z_{\ell+1}\dots z_{n-c}}(\beta) & \mbox{if $\pi$ ends in }12\dots c, \\
p_{z_\ell z_{\ell+1}\dots z_{n-1}}(\beta)-p_{z_\ell z_{\ell+1}\dots z_{k-1}}(\beta) & \mbox{otherwise}.\end{cases}$$
Then $\B(\pi)$ is the unique real root with $\beta\ge1$ of $P_\pi(\beta)$.
\end{theorem}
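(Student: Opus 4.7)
The approach is to convert the equation $f_u(\beta) = 1$ that defines $\BV(u)$ into a polynomial identity in $\beta$, for the explicit optimal word $u = w_{\andon{\ell}}$ (or its limit) identified in Section~\ref{sec:perm2word}. The three cases of the theorem mirror exactly the three scenarios handled in Proposition~\ref{prop:c1}, Theorem~\ref{th:findw1}, and Theorem~\ref{th:findw2}.

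For Cases~1 and~2 the derivation is direct. By Proposition~\ref{prop:c1}, Case~1 gives $\B(\pi) = \BV(u)$ with $u = z_\ell z_{\ell+1}\cdots z_{n-1}0^\infty$, and multiplying $f_u(\beta)=1$ by $\beta^{n-\ell}$ yields $p_{z_\ell\cdots z_{n-1}}(\beta)=0$. By Theorem~\ref{th:findw1}, Case~2 gives $u = z_\ell\cdots z_{n-1}z_k\cdots z_{\ell-2}(z_{\ell-1}{+}1)0^\infty$; multiplying $f_u(\beta)=1$ by $\beta^{n-k}$ produces $p_{z_\ell\cdots z_{n-1}z_k\cdots z_{\ell-1}}(\beta)-1=0$. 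Both match the theorem.

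Case~3 is the heart of the argument. First, the plan is to observe that the words $w^{(m)}_{\andon{\ell}}$ of Theorem~\ref{th:findw2} converge letter by letter, as $m\to\infty$, to the eventually periodic word $u^* = z_\ell\cdots z_{n-1}(z_k z_{k+1}\cdots z_{n-1})^\infty$. Continuity of $f_u(\beta)$ in $u$ combined with strict monotonicity of $f_u$ in $\beta$ then forces $\B(\pi) = \lim_m \BV(w^{(m)}_{\andon{\ell}}) = \BV(u^*)$. Writing $s = n-\ell$, $t = n-k$, $s' = k-\ell$, and setting $A = \sum_{i=1}^{s} z_{\ell+i-1}\beta^{s-i}$, $B = \sum_{i=1}^{t} z_{k+i-1}\beta^{t-i}$, $C = \sum_{i=1}^{s'} z_{\ell+i-1}\beta^{s'-i}$, the geometric series for the periodic tail gives $f_{u^*}(\beta) = A/\beta^s + B/\bigl(\beta^s(\beta^t-1)\bigr)$. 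Setting this equal to $1$ and clearing denominators yields $(\beta^s - A)(\beta^t - 1) = B$. The elementary identity $A = \beta^t C + B$ (obtained by splitting the coefficients of $A$ into the first $s'$ and the last $t$) rewrites the left-hand side minus $B$ as $\beta^t\bigl(p_{z_\ell\cdots z_{n-1}}(\beta) - p_{z_\ell\cdots z_{k-1}}(\beta)\bigr) = \beta^t P_\pi(\beta)$. Since $\beta\ge 1$, the factor $\beta^t$ is nonzero and the equation reduces to $P_\pi(\beta)=0$, as stated in the ``otherwise'' sub-case.

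For the remaining sub-case, in which $\pi$ ends in $1\,2\cdots c$, the plan is to inspect the bar-insertion recipe for $\zeta$: the ``start-bar'' rule does not apply because $\pi(n)=c\neq 1$, and no bar separates $z_{n-c+i}$ from $z_{n-c+i+1}$ because $\pi(n-c+i+1)=i+1<i+2=\pi(n-c+i+2)$. Hence $z_{n-c+1} = z_{n-c+2} = \cdots = z_{n-1} = 0$, the periodic block in $u^*$ collapses, and $u^*$ reduces to $z_\ell\cdots z_{n-c}0^\infty$, from which $f_{u^*}(\beta)=1$ gives $p_{z_\ell\cdots z_{n-c}}(\beta)=0$. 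Uniqueness of the root $\beta\ge 1$ of $P_\pi$ in all cases follows because $f_u$ is strictly decreasing on $(1,\infty)$ when $u\neq 0^\infty$ and satisfies $\lim_{\beta\to\infty} f_u(\beta) = 0$, so $f_u(\beta)=1$ admits at most one solution there. The main obstacle is the algebraic bookkeeping in Case~3, matching the polynomial produced by clearing denominators in the periodic equation with the difference $p_{z_\ell\cdots z_{n-1}} - p_{z_\ell\cdots z_{k-1}}$, together with the continuity argument needed to interchange the limit and $\BV$ in $\lim_m \BV(w^{(m)}_{\andon{\ell}}) = \BV(u^*)$.
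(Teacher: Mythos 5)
You correctly reproduce the paper's approach---passing from the explicit optimal words of Section~\ref{sec:perm2word} to polynomial equations via $f_u(\beta)=1$---and the algebraic bookkeeping in Case~3 matches the paper's (your identity $A=\beta^tC+B$ is exactly the regrouping the paper performs when clearing denominators). Cases~1 and~2 are done just as in the paper. For Case~3, the paper takes $m\to\infty$ directly in the defining equation~(\ref{eq:betam}) for $\BV(w^{(m)}_{\andon{\ell}})$ rather than first passing to the letterwise-limit word $u^*$ and invoking continuity of $\BV$; that is a cosmetic difference and your route is sound.

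There is one genuine gap, in the uniqueness claim for the ``otherwise'' sub-case of Case~3. Your argument shows that for $\beta>1$ the equation $P_\pi(\beta)=0$ is equivalent to $f_{u^*}(\beta)=1$, and monotonicity of $f_{u^*}$ gives at most one solution there. But this does not rule out $\beta=1$ as a spurious root of $P_\pi$: computing,
$$P_\pi(1) \;=\; p_{z_\ell\cdots z_{n-1}}(1)-p_{z_\ell\cdots z_{k-1}}(1)\;=\;-\bigl(z_k+z_{k+1}+\cdots+z_{n-1}\bigr),$$
so $\beta=1$ is a root of $P_\pi$ precisely when $z_k=\cdots=z_{n-1}=0$. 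You prove the implication ``$\pi$ ends in $12\cdots c$ implies $z_{n-c+1}=\cdots=z_{n-1}=0$'', but the ``otherwise'' branch requires the converse: when $\pi$ does \emph{not} end in $12\cdots c$, the entries $z_k,\ldots,z_{n-1}$ are not all zero, so that $P_\pi(1)<0$ and the root on $[1,\infty)$ really is unique. The paper establishes this converse using Lemma~\ref{lem:order}: if $z_k=\cdots=z_{n-1}=0$, then $\pi(k),\pi(k+1),\ldots,\pi(n)$ must be monotone, forcing $k=n-1$; applying the lemma again, together with $z_k=0$, forces $\pi$ to end in $12\cdots c$. You need to supply this (or an equivalent) argument to make the case split, and hence the uniqueness assertion, complete.
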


Note that $P_\pi(\beta)$ is always a monic polynomial with integer coefficients. For $\pi\in\S_n$, its degree is never greater than the maximum of $n-\ell$ and $n-k$, and in particular never greater than $n-1$.

\begin{proof}
In the case $c=1$, letting $w=\zeta0^\infty$, we know by Proposition~\ref{prop:c1} that $$\B(\pi)=\BW(w)=\BV(w_{\andon{\ell}})=\BV(z_\ell z_{\ell+1}\dots z_{n-1}0^\infty).$$
Thus, $\B(\pi)$ is the unique solution with $\beta\ge1$ of
$$\frac{z_\ell}{\beta}+\frac{z_{\ell+1}}{\beta^2}+\dots+\frac{z_{n-1}}{\beta^{n-\ell}}=1,$$
or equivalently, multiplying by $\beta^{n-\ell}$, of
$$\beta^{n-\ell}-z_\ell \beta^{n-\ell-1}-z_{\ell+1}\beta^{n-\ell-2}-\dots-z_{n-2}\beta-z_{n-1}=0,$$
that is, $p_{z_\ell z_{\ell+1}\dots z_{n-1}}(\beta)=0$.

In the case $c\neq 1$ and $\ell > k$, Theorem~\ref{th:findw1} states that if we now let
$$w = z_1z_2\cdots z_{n-1}\, z_{k} z_{k+1} \cdots z_{\ell-2}(z_{\ell-1}{+}1) 0^\infty,$$
then $\B(\pi)=\BW(w)=\BV(w_{\andon{\ell}})$.
Thus, $\B(\pi)$ is the unique solution with $\beta\ge1$ of
$$\frac{z_\ell}{\beta}+\frac{z_{\ell+1}}{\beta^2}+\dots+\frac{z_{n-1}}{\beta^{n-\ell}}+\frac{z_{k}}{\beta^{n-\ell+1}}+\dots+\frac{z_{\ell-2}}{\beta^{n-k-1}}+\frac{z_{\ell-1}+1}{\beta^{n-k}}=1,$$
or equivalently, multiplying by $\beta^{n-k}$, of
$$\beta^{n-k}-z_\ell \beta^{n-k-1}-z_{\ell+1}\beta^{n-k-2}-\dots-z_{n-1}\beta^{\ell-k}-z_{k}\beta^{\ell-k-1}-\dots-z_{\ell-1}-1=0,$$
that is, $p_{z_\ell z_{\ell+1}\dots z_{n-1}z_k z_{k+1}\dots z_{\ell-1}}(\beta)-1=0$.

Finally, if $c\neq 1$ and $\ell < k$, it follows from Theorem~\ref{th:findw1} that letting
$$w^{(m)} = z_1z_2\cdots z_{n-1}\, (z_{k} z_{k + 1} \cdots z_{n-1})^{m} z_k z_{k+1} \cdots z_{h-2}(z_{h-1}{+}1)0^\infty,$$
where $\pi(h)=\max\{\pi(k+1),\pi(k+2),\dots,\pi(n)\}$, we have
$$\B(\pi)=\lim_{m\rightarrow\infty}\BW(w^{(m)})$$
and $\BW(w^{(m)})=\BV(w^{(m)}_{\andon{\ell}})$.
Here $\BV(w^{(m)}_{\andon{\ell}})$ is the unique solution with $\beta\ge1$ of
\begin{multline}\label{eq:betam}\frac{z_\ell}{\beta}+\frac{z_{\ell+1}}{\beta^2}+\dots+\frac{z_{k-1}}{\beta^{k-\ell}}+\left(\frac{z_{k}}{\beta^{k-\ell+1}}+\dots+\frac{z_{n-1}}{\beta^{n-\ell}}\right)\left(1+\frac{1}{\beta^{n-k}}+\frac{1}{\beta^{2(n-k)}}+\dots+\frac{1}{\beta^{m(n-k)}}\right)
\\ +\frac{z_{k}}{\beta^{n-\ell+m(n-k)+1}}+\dots+\frac{z_{h-2}}{\beta^{n-\ell+m(n-k)+h-k-1}}+\frac{z_{h-1}+1}{\beta^{n-\ell+m(n-k)+h-k}}=1.\end{multline}
For fixed $m$, it is clear that $\BV(w^{(m)}_{\andon{\ell}})>1$, because $w^{(m)}_{\andon{\ell}}$ has at least two nonzero entries, since $z_\ell\ge1$.
Suppose first that not all of the entries $z_k,\dots,z_{n-1}$ are zero. In this case, making $m$ go to infinity in equation~(\ref{eq:betam})
and using that $\B(\pi)=\lim_{m\rightarrow\infty}\BV(w^{(m)}_{\andon{\ell}})$,
we see that $\B(\pi)$ is the solution with $\beta>1$ of
$$\frac{z_\ell}{\beta}+\frac{z_{\ell+1}}{\beta^2}+\dots+\frac{z_{k-1}}{\beta^{k-\ell}}+\left(\frac{z_{k}}{\beta^{k-\ell+1}}+\dots+\frac{z_{n-1}}{\beta^{n-\ell}}\right)\frac{1}{1-\frac{1}{\beta^{n-k}}}=1.$$
Multiplying by $\beta^{k-\ell}(\beta^{n-k}-1)$ we get
$$(\beta^{n-k}-1)(z_\ell\beta^{k-\ell-1}+z_{\ell+1}\beta^{k-\ell-2}+\dots+z_{k-1})+z_{k}\beta^{n-k-1}+\dots+z_{n-1}=\beta^{n-\ell}-\beta^{k-\ell},$$
which can be rearranged as
$$\beta^{n-\ell}-z_\ell\beta^{n-\ell-1}-z_{\ell+1}\beta^{n-\ell-2}-\dots-z_{n-1}=\beta^{k-\ell}-z_\ell\beta^{k-\ell-1}-z_{\ell+1}\beta^{k-\ell-2}-\dots-z_{k-1},$$
that is, $p_{z_\ell z_{\ell+1}\dots z_{n-1}}(\beta)=p_{z_\ell z_{\ell+1}\dots z_{k-1}}(\beta)$.

In the case where $z_k=\dots=z_{n-1}=0$, $\B(\pi)$ is the solution with $\beta\ge1$ of
$$z_\ell\beta^{k-\ell-1}+z_{\ell+1}\beta^{k-\ell-2}+\dots+z_{k-1}=\beta^{k-\ell},$$
or equivalently $p_{z_\ell z_{\ell+1}\dots z_{k-1}}(\beta)=0$.
This situation only happens when $\pi$ ends in $123\dots c$. Indeed,
one can use Lemma~\ref{lem:order} to show that the condition $z_k=\dots=z_{n-1}$ forces the sequence $\pi(k),\pi(k+1),\dots,\pi(n)$ to be monotonic,
which can only happen if $k=n-1$. Now, Lemma~\ref{lem:order} again and the fact that $z_k=0$ imply that if $d_i$ is the entry following $i$ in $\pi$, then
$1\neq d_1<d_2<\dots<d_{c-1}=c$, which forces the ending of $\pi$ to be $123\dots c$.
We remark that since $z_{n-c+1}=\dots=z_{n-1}=0$ in this case, we have that $p_{z_\ell z_{\ell+1}\dots z_{k-1}}(\beta)=\beta^{c-2}p_{z_\ell z_{\ell+1}\dots z_{n-c}}(\beta)$.
\end{proof}

\section{Examples}\label{sec:examples}

In this section we give examples where Proposition~\ref{prop:c1} and Theorems~\ref{th:findw1}, \ref{th:findw2}, and~\ref{th:Bpi} are used to construct words inducing a given permutation and to determine its shift-complexity.

\ben \renewcommand{\labelenumi}{(\arabic{enumi})}
\item Let $\pi=3421$. Using the construction from~\cite{Elishifts}, described also right after Proposition~\ref{prop:determined} above,
we get $\zeta(\pi)=121$. Proposition~\ref{prop:c1} states that $w=1210^\infty$ induces $\pi$ and
$\B(\pi)=\BW(w)=\BV(210^\infty)$. By Theorem~\ref{th:Bpi}, $\B(\pi)$
is the root with $\beta\ge1$ of
$$P_\pi(\beta)=p_{21}(\beta)=\beta^2-2\beta-1,$$
so $\B(3421)=1+\sqrt{2}$.

\item Let $\pi=735491826$. Using the construction from~\cite{Elishifts},
$\zeta(\pi)=42326051$. Applying Theorem~\ref{th:findw1} with $k=3$ and $\ell=5$, we get that $w=42326051330^\infty$ induces $\pi$ and
$$\B(\pi)=\BW(w)=\BV(6051330^\infty).$$
By Theorem~\ref{th:Bpi}, $\B(\pi)$
is the real root with $\beta\ge1$ of
$$P_\pi(\beta)=p_{605133}(\beta)-1=\beta^6-6\beta^5-5\beta^3-\beta^2-3\beta-3,$$
so $\B(735491826)\approx6.139428921$.

\item For $\pi=2516437$, we get $\zeta(\pi)=1303213$. By Theorem~\ref{th:findw1} with $k=7$ and $\ell=8$, the word $w=130321340^\infty$ induces $\pi$ and
$\B(\pi)=\BV(40^\infty)=4$.
In this simple case, $P_\pi(\beta)=p_{3}(\beta)-1=\beta-4$.

\item For $\pi=892364157$, we have seen earlier that $\zeta(\pi)=34113202$. Applying Theorem~\ref{th:findw2} with $k=5$, $\ell=2$, and $h=9$,
we have that $$w^{(m)}=34113202(3202)^m32030^\infty$$ induces $\pi$ for $m\ge2$, and
$$\B(\pi)=\lim_{m\rightarrow\infty} \BW(w^{(m)})=\lim_{m\rightarrow\infty} \BV(4113202(3202)^m32030^\infty).$$
By Theorem~\ref{th:Bpi}, $\B(\pi)$
is the real root with $\beta\ge1$ of
\begin{multline*}P_\pi(\beta)=p_{4113202}(\beta)-p_{411}(\beta)=(\beta^7-4\beta^6-\beta^5-\beta^4-3\beta^3-2\beta^2-2)-(\beta^3-4\beta^2-\beta-1)\\
=\beta^7-4\beta^6-\beta^5-\beta^4-4\beta^3+2\beta^2+\beta-1,\end{multline*}
so $\B(892364157)\approx4.327613926$.

\item For $\pi=85132674$, we get $\zeta(\pi)=3201023$. By Theorem~\ref{th:findw2} with $k=4$, $\ell=1$, and $h=7$, we have that
$$w^{(m)}=3201023(1023)^m1030^\infty$$ induces $\pi$ for $m\ge2$, and $$\B(\pi)=\lim_{m\rightarrow\infty} \BV(3201023(1023)^m1030^\infty).$$
By Theorem~\ref{th:Bpi}, $\B(\pi)$
is the real root with $\beta\ge1$ of
\begin{multline*}P_\pi(\beta)=p_{3201023}(\beta)-p_{320}(\beta)=(\beta^7-3\beta^6-2\beta^5-\beta^3-2\beta-3)-(\beta^3-3\beta^2-2\beta)\\
=\beta^7-3\beta^6-2\beta^5-2\beta^3+3\beta^2-3,\end{multline*}
so $\B(892364157)\approx3.584606864$.

\item Let $\pi=(c+1)(c+2)\dots n 1 2 \dots c$ for any fixed $1\le c\le n$. Here we get $\zeta(\pi)=0^{n-c-1}10^{c-1}$.
If $1<c<n$, then $k=n-1$, $\ell=n-c$ and $h=n$,
so by Theorem~\ref{th:findw2}, $$w^{(m)}=0^{n-c-1}10^{c-1}0^m10^\infty$$ induces $\pi$ for $m\ge n-2$, and $$\B(\pi)=\lim_{m\rightarrow\infty} \BV(10^{c-1}0^m10^\infty).$$
By Theorem~\ref{th:Bpi}, $\B(\pi)=1$ is the root of $P_\pi(\beta)=p_{1}(\beta)=\beta-1$.

If $c=n$, Theorem~\ref{th:findw1} gives $w=0^{n-1}10^\infty$, and
if $c=1$, Proposition~\ref{prop:c1} yields $w=0^{n-2}10^\infty$.
In both cases, $w$ induces $\pi$ and $\B(\pi)=\BV(10^\infty)=1$ as well.

It is not hard to see that these are the only permutations with $\B(\pi)=1$.
\een

The values of $\B(\pi)$ for all permutations of length 2, 3, and 4 are given in Table~\ref{tab:Bpi}. For permutations of length 5, these values appear in Table~\ref{tab:Bpi5}. They have been computed using
the implementation in {\it Maple} of the algorithm described in Sections~\ref{sec:perm2word} and~\ref{sec:Bpi}.

\begin{table}[hbt]
$$\begin{array}{|c|c|c|c|c|}
\hline
  \pi\in\S_2 &\pi\in\S_3 & \pi\in\S_4 & \B(\pi) & \B(\pi)\mbox{ is a root of}\\ \hline
  12, 21 & 123,231,312 & 1234,2341,3412,4123 &  1 & \beta-1\\ \hline
  & & 1342,2413,3124,4231 &  1.465571232 & \beta^3-\beta^2-1 \\ \hline
  & 132,213,321 & 1243,1324,2431,3142,4312 &  \frac{1+\sqrt{5}}{2}\approx1.618033989 & \beta^2-\beta-1  \\ \hline
  & & 4213 &  1.801937736 & \beta^3-\beta^2-2\beta+1 \\ \hline
  & & 1432,2143,3214,4321 &  1.839286755 & \beta^3-\beta^2-\beta-1 \\ \hline
  & & 2134,3241 &  2 & \beta-2 \\ \hline
  & & 4132 &  2.246979604 & \beta^3-2\beta^2-\beta+1 \\ \hline
  & & 2314,3421 &  1+\sqrt{2}\approx2.414213562 & \beta^2-2\beta-1 \\ \hline
  & & 1423 &  \frac{3+\sqrt{5}}{2}\approx2.618033989 & \beta^2-3\beta+1 \\ \hline
\end{array}$$
\caption{The shift-complexity of all permutations of length up to 4.}\label{tab:Bpi}
\end{table}

\begin{table}[hbt] 
{\small $$\begin{array}{|c|c|c|}
\hline
\pi\in\S_5 & \B(\pi) & \B(\pi)\mbox{ is a root of}\\ \hline
 12345,23451,34512,45123,51234 & 1 & \beta-1\\ \hline
 13452,24513,35124, 41235,52341 & 1.380277569 & \beta^4-\beta^3-1 \\ \hline
 12453,13524,24135, 35241,41352,53412& 1.465571232 & \beta^3-\beta^2-1 \\ \hline
 52413 & 1.558979878 & \beta^4-\beta^3-2\beta+1 \\ \hline
 \ba{c} 12354,12435,14253,23541,31425,\\ 35412,41253,42531,54123\ea & \frac{1+\sqrt{5}}{2}\approx1.6180 & \beta^2-\beta-1  \\ \hline
 53124 & 1.722083806 & \beta^4-\beta^3-\beta^2-\beta+1 \\ \hline
 13542,25413,31254, 43125,54231 & 1.754877666 &
 \beta^3-2\beta^2+\beta-1 \\ \hline
25314,53142 & 1.801937736 & \beta^3-\beta^2-2\beta+1 \\ \hline
 12543,13254,14325, 25431, 31542,42153,54312 & 1.839286755 & \beta^3-\beta^2-\beta-1 \\ \hline
54213 & 1.905166168 & \beta^4-\beta^3-2\beta^2+1 \\ \hline
 53214 & 1.921289610 & \beta^4-\beta^3-\beta^2-2\beta+1 \\ \hline
 15432,21543,32154,43215,54321 & 1.927561975 & \beta^4-\beta^3-\beta^2-\beta-1 \\ \hline
 \ba{c} 13245,21345,24351,31245,32145,\\ 32451,42351,43251,43512\ea & 2 & \beta-2 \\ \hline
 51342 & 2.117688633 & \beta^4-2\beta^3-\beta+1 \\ \hline
 51243 & \frac{1+\sqrt{5+4\sqrt{2}}}{2}\approx2.1322 & \beta^4-2\beta^3-\beta^2+2\beta-1 \\ \hline
 34125,42513,45231 & 2.205569430 & \beta^3-2\beta^2-1 \\ \hline
35142,45132,51324 & 2.246979604 & \beta^3-2\beta^2-\beta+1 \\ \hline
 14352,25143,32514,41325,52431  & 2.277452390 & \beta^4-2\beta^3-\beta-1 \\ \hline
 51432 & 2.296630263 & \beta^4-2\beta^3-2\beta+1 \\ \hline
 25134 & 2.324717957 & \beta^3-3\beta^2+2\beta-1 \\ \hline
 23514,31452 & 2.359304086 & \beta^3-2\beta^2-2 \\ \hline
 13425,23415,24531,34152,34521,43152,45312 & 1+\sqrt{2}\approx2.4142 & \beta^2-2\beta-1 \\ \hline
 45213 & 2.481194304 & \beta^3-2\beta^2-2\beta+2 \\ \hline
52143 & 2.496698205 & \beta^4-2\beta^3-\beta^2-\beta+1 \\ \hline
52134 & 2.505068414 & \beta^4-3\beta^3+\beta^2+\beta-1 \\ \hline
 14532,21453,35214,42135,53241  & 2.521379707 & 
 \beta^3-3\beta^2+2\beta-2 \\ \hline
 34215,41532,45321 & 2.546818277 & \beta^3-2\beta^2-\beta-1 \\ \hline
12534,14523,15234,21534,41523 & \frac{3+\sqrt{5}}{2}\approx2.6180 & \beta^2-3\beta+1 \\ \hline
 14235,25341 & 2.658967082 & \beta^3-2\beta^2-\beta-2 \\ \hline
 52314 & 2.691739510 & \beta^4-2\beta^3-2\beta^2+1 \\ \hline
15342,24153,31524,42315,53421 & 2.696797189 & \beta^4-2\beta^3-\beta^2-2\beta-1 \\ \hline
 21354,21435,32541 & 1+\sqrt{3}\approx2.7320 & \beta^2-2\beta-2 \\ \hline
 54132 & 2.774622899 & \beta^4-2\beta^3-3\beta^2+2\beta+1 \\ \hline
23154,24315,35421 & 2.831177207 & \beta^3-2\beta^2-2\beta-1 \\ \hline
 15423 & 2.879385242 & \beta^3-3\beta^2+1 \\ \hline
15324 & 2.912229178 & \beta^3-2\beta^2-3\beta+1 \\ \hline
23145,34251 & 3 & \beta-3 \\ \hline
51423 & 3.234022893 & \beta^4-4\beta^3+3\beta^2-2\beta+1 \\ \hline
32415,43521 & \frac{3+\sqrt{13}}{2}\approx3.3028 & \beta^2-3\beta-1 \\ \hline
15243 & 3.490863615 & \beta^3-3\beta^2-2\beta+1 \\ \hline
\end{array}$$}
\caption{The shift-complexity of all permutations of length 5. }\label{tab:Bpi5}
\end{table}

\section{The shortest forbidden pattern of $\Sigma_\beta$}\label{sec:shortest}

In the previous two sections, our goal was to compute the smallest $\beta$ needed for a given permutation to be realized by the $\beta$-shift.
In this section we consider the reverse problem: given a real number $\beta>1$, we want to determine the length of the shortest forbidden pattern of $\Sigma_\beta$.
This is useful in practice when discriminating between sequences generated by $\beta$-shifts from random sequences by looking for missing patterns.

When $\beta=N\ge2$ is an integer, Theorem~\ref{th:aek_minshift} implies that the length of the shortest forbidden pattern of $\Sigma_\beta$ is $n=N+2$, and
Proposition~\ref{prop:6special} states that there are exactly six forbidden patterns of shortest length $n$.
Let us denote this set by $$\G_n=\{\rho,\rho^R,\rho^C,\rho^{RC},\tau,\tau^C\}.$$ 
Recall that if $n$ is even and we let $s=n/2$, then
\begin{eqnarray*}
\rho&=& 1\, n\, 2\, (n{-}1)\, 3\, (n{-}2)\, \dots\, (s{-}1)\,(s{+}2)\,s\,(s{+}1),\\
\rho^R&=& (s{+}1)\,s \, (s{+}2)\, (s{-}1)\,  \dots\,(n{-}2)\,3\, (n{-}1) \,2\,n\,1,\\
\rho^C&=& n\, 1\, (n{-}1)\, 2\, (n{-}2)\, 3\, \dots\, (s{+}2)\,(s{-}1)\,(s{+}1)\,s,\\
\rho^{RC}&=& s\, (s{+}1)\,(s{-}1) \, (s{+}2)\,   \dots\,3\,(n{-}2)\,2 \,(n{-}1)\,1\,n,\\
\tau&=&(s{+}1)\,(s{+}2)\,s\,(s{+}3)\,\dots\, 4\, (n{-}1)\, 3\, n\, 2\, 1,\\
\tau^C&=&s\,(s{-}1)\,(s{+}1)\,(s{-}2)\,\dots\, (n{-}3)\, 2\,(n{-}2)\, 1\, (n{-}1)\, n,
\end{eqnarray*}
and if $n$ is odd and we let $s=(n+1)/2$, then
\begin{eqnarray*}
\rho&=& 1\, n\, 2\, (n{-}1)\, 3\, (n{-}2)\, \dots\, (s{+}2)\,(s{-}1)\,(s{+}1)\,s,\\
\rho^R&=& s \,(s{+}1)\, (s{-}1)\,(s{+}2)\, \dots\,(n{-}2)\,3\, (n{-}1) \,2\,n\,1,\\
\rho^C&=& n\, 1\, (n{-}1)\, 2\, (n{-}2)\, 3\, \dots\, (s{-}2)\,(s{+}1)\,(s{-}1)\,s,\\
\rho^{RC}&=& s\, (s{-}1)\,(s{+}1) \, (s{-}2)\,   \dots\,3\,(n{-}2)\,2 \,(n{-}1)\,1\,n,\\
\tau&=&(s{+}1)\,s\,(s{+}2)\,(s{-}1)\,\dots\, 4\, (n{-}1)\, 3\, n\, 2\, 1,\\
\tau^C&=&(s{-}1)\,s\,(s{-}2)\,(s{+}1)\,\dots\, (n{-}3)\, 2\,(n{-}2)\, 1\, (n{-}1)\, n.
\end{eqnarray*}

It will be convenient to extend the definition to $n=3$, which gives $\Gamma_3=\S_3$, and to define $\rho=12$ when $n=2$.
We can rephrase Proposition~\ref{prop:6special} in terms of the statistic $N(\pi)$, defined in equation~(\ref{def:N}), as follows.
\begin{proposition}\label{prop:6special2}
Let $n\ge3$, and let $\pi\in\S_n$. We have
$N(\pi)=n-1$ if $\pi\in\G_n$, and $N(\pi)\le n-2$ otherwise.
\end{proposition}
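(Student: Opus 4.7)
The plan is to observe that Proposition~\ref{prop:6special2} is a direct reformulation of Proposition~\ref{prop:6special} combined with Theorem~\ref{th:aek_minshift}, so no new combinatorial work is required; the proof should just carefully unwind the definition $N(\pi)=\min\{N:\pi\in\Al(\Sigma_N)\}$ and split on whether $\pi\in\G_n$.

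First I would handle the implication $\pi\in\G_n\Rightarrow N(\pi)=n-1$. For the lower bound $N(\pi)\ge n-1$, apply Proposition~\ref{prop:6special} with $N=n-2$: the length-$n$ forbidden patterns of $\Sigma_{n-2}$ are exactly the elements of $\G_n$, so $\pi\notin\Al(\Sigma_{n-2})$. For the upper bound $N(\pi)\le n-1$, apply Theorem~\ref{th:aek_minshift} to $\Sigma_{n-1}$: its shortest forbidden patterns have length $(n-1)+2=n+1>n$, so every permutation in $\S_n$, and in particular $\pi$, is allowed by $\Sigma_{n-1}$.

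Second, I would handle the implication $\pi\notin\G_n\Rightarrow N(\pi)\le n-2$. For $n=3$ this case is vacuous since $\G_3=\S_3$ by the convention stated just before the proposition. For $n\ge 4$, Proposition~\ref{prop:6special} asserts that the \emph{only} length-$n$ forbidden patterns of $\Sigma_{n-2}$ are those in $\G_n$, while Theorem~\ref{th:aek_minshift} rules out any forbidden pattern of length less than $n$; hence $\pi\in\Al(\Sigma_{n-2})$, giving $N(\pi)\le n-2$.

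There is no real obstacle: the argument is essentially bookkeeping. The only minor care is in the boundary conventions $\G_3=\S_3$ (and $\rho=12$ for $n=2$), but these are consistent with the desired statement, since every $\pi\in\S_3$ has $N(\pi)=2=3-1$ (see Table~\ref{tab:Npi}) and the proposition is stated only for $n\ge 3$.
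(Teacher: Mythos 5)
Your proof is correct and takes the same route the paper implicitly intends: the paper simply asserts Proposition~\ref{prop:6special2} as a restatement of Proposition~\ref{prop:6special}, and your argument is precisely the bookkeeping that justifies this, combining Proposition~\ref{prop:6special} (applied with $N=n-2$) with Theorem~\ref{th:aek_minshift} (applied with $N=n-1$) and the monotonicity $\Al(\Sigma_N)\subseteq\Al(\Sigma_{N'})$ for $N\le N'$ from Corollary~\ref{cor:inclal}, plus the convention $\G_3=\S_3$ for the boundary case.
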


We now use the techniques developed in Sections~\ref{sec:perm2word} and~\ref{sec:Bpi} to compute the shift-complexity of the six permutations in $\Gamma_n$.

\begin{proposition}\label{prop:Brho}
Let $n\ge4$, and let $\pi\in\Gamma_n$. Then $\B(\pi)$ is the unique real solution with $\beta>1$ of the equation $\beta=F_\pi(\beta)$, where
\bea\nn F_\rho(\beta)&=&\begin{cases} n-2+\dfrac{1}{\beta}+\dfrac{1}{\beta+1}-\dfrac{1}{\beta^{n-2}(\beta+1)} & \mbox{if $n$ is even}, \\
n-2+\dfrac{1}{\beta}+\dfrac{1}{\beta+1}-\dfrac{1}{\beta^{n-3}(\beta+1)} & \mbox{if $n$ is odd},\end{cases} \\
\nn F_{\rho^{RC}}(\beta)=F_{\tau}(\beta)&=& n-2+\frac{1}{\beta},\\
\nn F_{\rho^C}(\beta)&=&\begin{cases} n-2+\dfrac{1}{\beta+1}-\dfrac{1}{\beta^{n-2}(\beta+1)} & \mbox{if $n$ is even}, \\
n-2+\dfrac{1}{\beta+1}-\dfrac{1}{\beta^{n-1}(\beta+1)} & \mbox{if $n$ is odd},\end{cases} \\
\nn F_{\rho^{R}}(\beta)=F_{\tau^C}(\beta)&=& n-2.
\eea
\end{proposition}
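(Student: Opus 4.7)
The plan is to apply Theorem~\ref{th:Bpi} individually to each of the six permutations in $\Gamma_n = \{\rho, \rho^R, \rho^C, \rho^{RC}, \tau, \tau^C\}$. For each $\pi$, the four steps are: (i) compute $\zeta(\pi) = z_1 z_2 \cdots z_{n-1}$ via the combinatorial algorithm described just after Proposition~\ref{prop:determined}; (ii) identify $c = \pi(n)$, $\ell = \pi^{-1}(n)$, and (when $c \neq 1$) $k = \pi^{-1}(c-1)$ in order to select the relevant case of Theorem~\ref{th:Bpi}; (iii) write down the polynomial $P_\pi(\beta)$; and (iv) rearrange $P_\pi(\beta) = 0$ into the form $\beta = F_\pi(\beta)$. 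Parity of $n$ must be tracked throughout, since the one-line notations of the six permutations differ slightly between even and odd $n$.

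The six permutations split naturally into three groups. First, $\rho^R$ and $\tau^C$ both yield $F_\pi(\beta) = n-2$: for $\rho^R$ (which has $c=1$, $\ell = n-1$), Proposition~\ref{prop:c1} combined with the computation $z_{n-1}(\rho^R) = n-2$ gives $\B(\rho^R) = n-2$; for $\tau^C$ (which has $c=n$, $\ell = n$, $k = n-1$), the second case of Theorem~\ref{th:Bpi} reduces to $\beta = z_{n-1}(\tau^C) + 1 = n-2$. Second, $\rho^{RC}$ and $\tau$ both lead to $\beta^2 - (n-2)\beta - 1 = 0$, equivalent to $F_\pi(\beta) = n-2 + 1/\beta$: for $\tau$ ($c=1$, $\ell = n-2$), Proposition~\ref{prop:c1} produces $\beta^2 - z_{n-2}\beta - z_{n-1} = 0$ with $z_{n-2} = n-2$ and $z_{n-1} = 1$; for $\rho^{RC}$ ($c=n$, $\ell = n$, $k = n-2$), the second case of Theorem~\ref{th:Bpi} produces $\beta^2 - z_{n-2}\beta - z_{n-1} - 1 = 0$ with $z_{n-2} = n-2$ and $z_{n-1} = 0$. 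For these two groups, the work reduces to careful bookkeeping of the $\zeta$ algorithm.

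The remaining pair $\rho, \rho^C$ falls under the third case of Theorem~\ref{th:Bpi} with $\ell < k$, and here the substance of the argument lies. For both permutations, $\zeta$ has an alternating form interleaving small entries $0,1,2,\dots$ with large entries $n-2, n-3, \dots$, and the polynomial $P_\pi(\beta) = p_{z_\ell \cdots z_{n-1}}(\beta) - p_{z_\ell \cdots z_{k-1}}(\beta)$ has many nonzero coefficients. Rather than manipulating $P_\pi$ directly, I would work from the limit characterization $\B(\pi) = \lim_{m \to \infty} \BV(w^{(m)}_{\andon{\ell}})$ of Theorem~\ref{th:findw2}: the limiting word $w^{(\infty)}_{\andon{\ell}}$ has an eventually periodic tail of period $n - k \in \{1,2\}$, so the equation $f_{w^{(\infty)}_{\andon{\ell}}}(\beta) = 1$ splits into a finite alternating initial segment and a geometric tail. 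The key algebraic identity $\frac{1}{\beta+1} - \frac{1}{\beta^m(\beta+1)} = \sum_{j=1}^m \frac{(-1)^{j-1}}{\beta^j}$ lets one recognize the alternating segment as collapsing into the characteristic $\frac{1}{\beta+1}$ term (plus the correction $-\frac{1}{\beta^e(\beta+1)}$) appearing in $F_\pi$. The exponent $e$ depends on $\pi \in \{\rho, \rho^C\}$ (which fixes $\ell \in \{2,1\}$) and on the parity of $n$ (which determines where the alternating pattern terminates in $\zeta$), producing the four distinct sub-formulas in the statement.

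The main obstacle is this last identification for $\rho$ and $\rho^C$: determining $\zeta$ at its right endpoint precisely in each of the four parity-times-permutation sub-cases, and tracking the correct power of $\beta$ in the correction term once it combines with the geometric tail. Uniqueness of a solution $\beta > 1$ to $\beta = F_\pi(\beta)$ follows by clearing denominators, which produces a polynomial equation whose unique root in $[1,\infty)$ is provided by Theorem~\ref{th:Bpi} (the extraneous factor is a power of $\beta+1$, which has no root in $[1,\infty)$); alternatively, each $F_\pi$ is strictly decreasing on $(1,\infty)$ whereas $y = \beta$ is strictly increasing, so the curves cross at most once, and comparison at $\beta = 1$ and as $\beta \to \infty$ confirms the unique crossing.
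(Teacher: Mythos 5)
Your approach is essentially the paper's: compute $\zeta(\pi)$ and the indices $c,\ell,k$ for each of the six members of $\Gamma_n$, select the appropriate case of Theorem~\ref{th:Bpi}, write down $P_\pi$, and rearrange $P_\pi(\beta)=0$ into $\beta=F_\pi(\beta)$. Your case-by-case identifications of $(c,\ell,k)$ and of the relevant entries of $\zeta$ all check out against the polynomials $P_\pi$ given in the paper, and your handling of $\rho,\rho^C$ via the periodic tail in $w^{(m)}_{\andon{\ell}}$ is simply the same manipulation the paper performs inside the proof of Theorem~\ref{th:Bpi} (equation~(\ref{eq:betam})), rather than a different route.

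Two small points deserve correction. First, the identity you invoke, $\frac{1}{\beta+1}-\frac{1}{\beta^m(\beta+1)}=\sum_{j=1}^m\frac{(-1)^{j-1}}{\beta^j}$, holds only for \emph{even} $m$; the general closed form is $\frac{1}{\beta+1}-\frac{(-1)^m}{\beta^m(\beta+1)}$. This does not break the argument because the exponents that actually occur ($n-2$ or $n-3$ for $\rho$, $n-2$ or $n-1$ for $\rho^C$) are always even — indeed this is exactly why the formulas in the proposition split by parity — but as stated your identity would mislead a reader. Second, your ``alternative'' uniqueness argument is incorrect: it is not true that each $F_\pi$ is strictly decreasing on $(1,\infty)$. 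For instance $F_{\rho^R}$ is constant, and a direct computation gives $F'_\rho(1)=\frac{n-4}{2}\ge0$ and $F'_{\rho^C}(1)=\frac{n-2}{2}>0$, so both $F_\rho$ and $F_{\rho^C}$ are initially increasing for $n\ge4$. Fortunately your primary uniqueness argument — clear denominators, observe that the extraneous factor is a power of $\beta+1$ with no root in $[1,\infty)$, and invoke the uniqueness in Theorem~\ref{th:Bpi} — is the correct one and is also what the paper relies on, so the proof stands once the alternative is dropped.
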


For $n=3$, it is easy to check using Theorem~\ref{th:Bpi} that
$\B(132)=\B(213)=\B(321)=\frac{1+\sqrt{5}}{2}$ and $\B(123)=\B(231)=\B(312)=1$, which in fact coincide with the solutions with $\beta\ge1$ of the equations in Proposition~\ref{prop:Brho} for $n=3$.
For $n=2$, clearly $\B(12)=\B(21)=1$.

\begin{proof}
Let $s=\lceil n/2 \rceil$. We start with $\pi=\rho$. If $n$ is even, the word given by Theorem~\ref{th:findw2} is
$$w^{(m)}=0(n{-}2)1(n{-}3)2(n{-}4)\dots(s{+}1)(s{-}2)s(s{-}1)(s{-}1)^m s0^\infty,$$
and Theorem~\ref{th:Bpi} states that $\B(\rho)$ is the unique root with $\beta>1$ of
\begin{eqnarray*}
P_\rho(\beta)&=&p_{(n{-}2)1(n{-}3)2(n{-}4)\dots(s{+}1)(s{-}2)s(s{-}1)}(\beta)-p_{(n{-}2)1(n{-}3)2(n{-}4)\dots(s{+}1)(s{-}2)s}(\beta)\\
&=&\beta^{n-2}-(n-1)\beta^{n-3}+(n-3)\beta^{n-4}-(n-4)\beta^{n-5}+(n-5)\beta^{n-6}-\dots-2\beta+1\\
&=&\beta^{n-2}-\beta^{n-3}-\dfrac{(n-2)\beta^{n-1}+(n-1)\beta^{n-2}-1}{(\beta+1)^2}.\end{eqnarray*} After some algebraic manipulations,
the equation $P_\rho(\beta)=0$ becomes
$$\beta=n-2+\frac{1}{\beta}+\frac{1}{\beta+1}-\frac{1}{\beta^{n-2}(\beta+1)}.$$
If $n$ is odd, using that $n\ne3$, Theorem~\ref{th:findw2} gives
$$w^{(m)}=0(n{-}2)1(n{-}3)2(n{-}4)\dots(s{-}3)s(s{-}2)(s{-}1)((s{-}2)(s{-}1))^m(s{-}1)0^\infty,$$
so by Theorem~\ref{th:Bpi}, \begin{eqnarray*}P_\rho(\beta)&=&p_{(n{-}2)1(n{-}3)2(n{-}4)\dots(s{-}3)s(s{-}2)(s{-}1)}(\beta)-p_{(n{-}2)1(n{-}3)2(n{-}4)\dots(s{-}3)s}(\beta)\\
&=&\beta^{n-2}-(n-2)\beta^{n-3}-2\beta^{n-4}+\beta^{n-5}-\beta^{n-6}+\beta^{n-7}-\dots-\beta+1 \\
&=&\beta^{n-2}-(n-2)\beta^{n-3}-\beta^{n-4}-\dfrac{\beta^{n-3}-1}{\beta+1}.\end{eqnarray*}
The equation $P_\rho(\beta)=0$ can be written as
\beq\label{eq:Prho}\beta=n-2+\frac{1}{\beta}+\frac{1}{\beta+1}-\frac{1}{\beta^{n-3}(\beta+1)}.\eeq

For $\pi\in\{\rho^{RC},\tau\}$, after some computations, Theorem~\ref{th:Bpi} gives the polynomial
$$P_{\rho^{RC}}(\beta)=P_{\tau}(\beta)=\beta^2-(n-2)\beta-1,$$
from where the equation follows. In fact, $$\B(\rho^{RC})=\B(\tau)=\frac{n-2+\sqrt{(n-2)^2+4}}{2}.$$

For $\pi=\rho^C$, a similar argument shows that if $n$ is even,
\begin{eqnarray*}
P_{\rho^C}(\beta)&=&\beta^{n-1}-(n-2)\beta^{n-2}-\beta^{n-3}+\beta^{n-4}-\beta^{n-5}-\dots-\beta+1\\
&=&\beta^{n-1}-(n-2)\beta^{n-2}-\dfrac{\beta^{n-2}-1}{\beta+1},\end{eqnarray*}
and if $n$ is odd,
\begin{eqnarray*}
P_{\rho^C}(\beta)&=&\beta^{n-1}-(n-1)\beta^{n-2}+(n-2)\beta^{n-3}-(n-3)\beta^{n-4}-\dots-2\beta+1\\
&=&\beta^{n-1}-\dfrac{(n-1)\beta^{n}+n\beta^{n-1}-1}{(\beta+1)^2}.\end{eqnarray*}
The equations for $\B(\rho^C)$ are obtained by setting $P_{\rho^C}(\beta)=0$.

Finally, for $\pi\in\{\rho^R,\tau^C\}$, we get $P_{\rho^{R}}(\beta)=P_{\tau^C}(\beta)=\beta-(n-2)$, so $$\B(\rho^{R})=\B(\tau^C)=n-2.$$
\end{proof}

It is a consequence of the above result that the maximum shift-complexity for permutations in~$\G_n$ is achieved at~$\rho$.
\begin{corollary}\label{cor:rhoG}
Let $n\ge4$, and let $\pi\in\G_n\setminus\{\rho\}$. Then $\B(\pi)<\B(\rho)$.
\end{corollary}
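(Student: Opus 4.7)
The plan is to exploit Proposition~\ref{prop:Brho}, which expresses $\B(\pi)$, for each $\pi\in\G_n$, as the unique solution with $\beta>1$ of $\beta=F_\pi(\beta)$. I will show that $F_\rho(\beta)>F_\pi(\beta)$ pointwise on $(1,\infty)$ for every $\pi\in\G_n\setminus\{\rho\}$, and then close the argument via a short comparison principle that avoids any monotonicity discussion for the $F_\pi$'s.

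The comparison principle runs as follows. For each $\pi\in\G_n$, Theorem~\ref{th:Bpi} produces a monic polynomial $P_\pi(\beta)$ of which $\B(\pi)$ is the unique real root in $[1,\infty)$. Being monic, $P_\pi$ tends to $+\infty$ at $+\infty$, so $P_\pi(\beta)>0$ for $\beta>\B(\pi)$ and $P_\pi(\beta)<0$ for $1\le\beta<\B(\pi)$. Because clearing the positive denominators in $\beta=F_\pi(\beta)$ recovers $P_\pi(\beta)=0$ up to multiplication by a positive factor, this same sign pattern is shared by $\beta-F_\pi(\beta)$. In particular, once I know $F_\rho(\B(\pi))>\B(\pi)=F_\pi(\B(\pi))$, applying the sign statement to $\rho$ forces $\B(\pi)<\B(\rho)$.

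It then suffices to verify the pointwise inequality $F_\rho(\beta)>F_\pi(\beta)$ on $(1,\infty)$ for every $\pi\in\G_n\setminus\{\rho\}$, and this is a direct algebraic check from the formulas in Proposition~\ref{prop:Brho}. Writing $k=n-2$ when $n$ is even and $k=n-3$ when $n$ is odd, the differences $F_\rho-F_{\rho^R}=F_\rho-F_{\tau^C}$ both equal $\tfrac{1}{\beta}+\tfrac{1}{\beta+1}-\tfrac{1}{\beta^k(\beta+1)}$, manifestly positive for $\beta\ge 1$. The differences $F_\rho-F_{\rho^{RC}}=F_\rho-F_\tau$ collapse to $\tfrac{\beta^k-1}{\beta^k(\beta+1)}$, positive for $\beta>1$. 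The difference $F_\rho-F_{\rho^C}$ equals $\tfrac{1}{\beta}$ in the even case and $\tfrac{1}{\beta}-\tfrac{\beta-1}{\beta^{n-1}}$ in the odd case, positivity in the odd case reducing to $\beta^{n-2}>\beta-1$, which holds trivially for $\beta>1$ and $n\ge 4$.

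No serious obstacle is anticipated: the content of the corollary is essentially readable off Proposition~\ref{prop:Brho}, in which $F_\rho$ beats each competitor either by an extra reciprocal term or by a favorable sign. The only subtlety lies in the $\rho^C$ odd-$n$ case, where the $\tfrac{1}{\beta}$ advantage enjoyed by $F_\rho$ must offset its slightly larger subtracted term $\tfrac{1}{\beta^{n-3}(\beta+1)}$ compared to $F_{\rho^C}$'s $\tfrac{1}{\beta^{n-1}(\beta+1)}$, and the elementary bound on $\beta^{n-2}$ handles this cleanly.
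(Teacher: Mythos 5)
Your proposal is correct and takes essentially the same route as the paper: both establish the pointwise inequality $F_\rho(\beta)>F_\pi(\beta)$ on $(1,\infty)$ for each $\pi\in\G_n\setminus\{\rho\}$ by direct inspection of Proposition~\ref{prop:Brho}, and then conclude from the uniqueness of the crossing of $F_\pi$ with the line $y=\beta$. The paper phrases the closing step by noting that each $F_\pi$ eventually falls below the line $y=\beta$, whereas you make the sign argument explicit via the monic polynomial $P_\pi$; this is a slightly more careful rendering of the same idea rather than a different method, and your algebraic verifications of the three pointwise inequalities (including the odd-$n$, $\rho^C$ case reducing to $\beta^{n-2}>\beta-1$) are all correct.
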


\begin{proof}
It follows from Proposition~\ref{prop:Brho} that for all $\beta>1$,
$$F_{\rho^{R}}(\beta)=F_{\tau^C}(\beta)<F_{\rho^C}(\beta)<F_{\rho^{RC}}(\beta)=F_{\tau}(\beta)<F_{\rho}(\beta).$$
For each $\pi\in\Gamma_n$, $\B(\pi)$ is the unique intersection with $\beta>1$ of the graph of $F_\pi(\beta)$ with the line $y=\beta$.
Since $\lim_{\beta\rightarrow\infty}F_\pi(\beta)=0$, we have
$$\B(\rho^{R})=\B(\tau^C)<\B(\rho^C)<\B(\rho^{RC})=\B(\tau)<\B(\rho).$$
\end{proof}

Now we come to the main result of this section, namely that among all permutations of length~$n$, $\rho$ is the one with the highest shift-complexity.

\begin{theorem}\label{thm:rhoS}
Let $n\ge4$, and let $\pi\in\S_n\setminus\{\rho\}$. Then $$\B(\pi)<\B(\rho).$$
\end{theorem}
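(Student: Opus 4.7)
The plan is to split the argument into two cases according to whether $\pi$ lies in the distinguished set $\Gamma_n$ of the six "extremal" length-$n$ permutations, since almost all the heavy lifting has already been done in the preceding results.

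First, for $\pi\in\Gamma_n\setminus\{\rho\}$, the conclusion is immediate from Corollary~\ref{cor:rhoG}, which already establishes the strict chain $\B(\rho^R)=\B(\tau^C)<\B(\rho^C)<\B(\rho^{RC})=\B(\tau)<\B(\rho)$. So it suffices to handle $\pi\in\S_n\setminus\Gamma_n$.

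For that remaining case, I will combine three ingredients that are already available. From Proposition~\ref{prop:Brho} we read off the explicit value $\B(\rho^R)=n-2$, and Corollary~\ref{cor:rhoG} upgrades this to $\B(\rho)>n-2$. On the other side, Proposition~\ref{prop:6special2} asserts that any $\pi\notin\Gamma_n$ satisfies $N(\pi)\le n-2$. The crucial bridge is equation~(\ref{eq:BN}), $N(\pi)=\fl{\B(\pi)}+1$, which rearranges to the strict inequality $\B(\pi)<N(\pi)$. Chaining these gives
\[
\B(\pi)\;<\;N(\pi)\;\le\;n-2\;<\;\B(\rho),
\]
which is exactly the claim.

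There is essentially no obstacle here beyond correctly citing the preparatory results; in effect, Proposition~\ref{prop:6special2} forces every non-$\Gamma_n$ permutation to have alphabet-complexity at most $n-2$, which by~(\ref{eq:BN}) translates into a shift-complexity strictly below $n-2$, whereas $\B(\rho)$ is pinned above the integer threshold $n-2$ by Proposition~\ref{prop:Brho}. The only subtle point worth emphasizing in the writeup is that the inequality $\B(\pi)<N(\pi)$ coming from~(\ref{eq:BN}) is strict (it is strict by construction, since $\fl{\B(\pi)}=N(\pi)-1$ forces $\B(\pi)<N(\pi)$), which is what allows the argument to close with a clean $<$ rather than $\le$.
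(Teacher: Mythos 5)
Your proof is correct and follows essentially the same approach as the paper: both split on whether $\pi\in\Gamma_n$, both invoke Proposition~\ref{prop:6special2} and equation~(\ref{eq:BN}) to bound $\B(\pi)$ below $n-2$ in the non-$\Gamma_n$ case, and both finish with Corollary~\ref{cor:rhoG}. The only cosmetic difference is that you derive $n-2<\B(\rho)$ via $\B(\rho^R)=n-2$ and the strict chain in Corollary~\ref{cor:rhoG}, whereas the paper uses the (sufficient) observation $\fl{\B(\rho)}=n-2$, so $\B(\rho)\ge n-2$.
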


Note that for $n\in\{2,3\}$ and $\pi\in\S_n\setminus\{\rho\}$, we have $\B(\pi)\le\B(\rho)$.

\begin{proof}
We know by Proposition~\ref{prop:6special2} that if $\pi\in\S_n\setminus\G_n$, then $N(\pi)\le n-2$. Thus, by equation~(\ref{eq:BN}), $\fl{\B(\pi)}\le n-3$, so $$\B(\pi)<n-2$$ in this case.
On the other hand, if $\pi\in\G_n$, then $\fl{\B(\pi)}=N(\pi)-1=n-2$, so $$n-2\le \B(\pi)<n-1.$$
Thus, the six permutations in $\G_n$ have a higher value of $\B(\pi)$ than permutations in $\S_n\setminus\G_n$,
and among these six, $\pi=\rho$ gives the highest value of $\B(\pi)$ by Corollary~\ref{cor:rhoG}.
\end{proof}

For each $n\ge2$, let $\beta_n=\B(\rho)$, where $\rho\in\S_n$. We note that $n-2<\beta_n<n-1$ for $n\ge3$. The first terms of the sequence $\{\beta_n\}_{n\ge2}$ are (up to truncation)
$1$, $1.618033989$, $2.618033989$, $3.490863615$, $4.411024434$, $5.344530094$, $6.295894835$, $7.258844460$, $8.229852937$\dots.

It follows from Theorem~\ref{thm:rhoS} and Proposition~\ref{prop:open} that $\S_n\subseteq\Al(\Sigma_\beta)$ if and only if $\beta>\beta_n$. In other words,
$\beta_n$ is the threshold after which all permutations in $\S_n$ are realized by the $\beta$-shift.
It is now straightforward to determine the length of the shortest forbidden pattern of $\Sigma_\beta$.
The following is a generalization of Theorem~\ref{th:aek_minshift}.

\begin{theorem}
Let $\beta>1$. The length of the shortest forbidden pattern of $\Sigma_\beta$ is the value of $n$ such that
$\beta_{n-1}<\beta\le \beta_n$.
\end{theorem}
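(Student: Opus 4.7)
The plan is to combine the observation stated just before the theorem, namely that $\S_n \subseteq \Al(\Sigma_\beta)$ if and only if $\beta > \beta_n$, with the elementary fact that forbidden patterns propagate to longer lengths. Specifically, I first want to establish the following monotonicity lemma: if every permutation in $\S_n$ is allowed by $\Sigma_\beta$, then so is every permutation in $\S_{n-1}$. This is because, given $\pi \in \S_{n-1}$, one can pick any extension $\sigma \in \S_n$ whose first $n-1$ entries standardize to $\pi$; if $x$ realizes $\sigma$, i.e.\ $\Pat(x,\Sigma_\beta,n)=\sigma$, then by definition of standardization $\Pat(x,\Sigma_\beta,n-1)=\pi$ as well. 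Iterating, $\S_n \subseteq \Al(\Sigma_\beta)$ implies $\S_k \subseteq \Al(\Sigma_\beta)$ for every $k \le n$.

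With this in hand, the shortest forbidden pattern of $\Sigma_\beta$ has length exactly $n$ if and only if (i) $\S_{n-1} \subseteq \Al(\Sigma_\beta)$ and (ii) $\S_n \not\subseteq \Al(\Sigma_\beta)$. By the remark quoted above (a consequence of Theorem~\ref{thm:rhoS} combined with Proposition~\ref{prop:open}), condition (i) is equivalent to $\beta > \beta_{n-1}$ and condition (ii) is equivalent to $\beta \le \beta_n$. Intersecting these two inequalities yields $\beta_{n-1} < \beta \le \beta_n$, as claimed. Conversely, for any $\beta>1$, the sequence $\{\beta_n\}_{n\ge 2}$ is strictly increasing with $\beta_2=1$ and $\beta_n \to \infty$ (since $n-2<\beta_n<n-1$ for $n\ge3$), so there is a unique $n$ with $\beta_{n-1} < \beta \le \beta_n$, and the shortest forbidden pattern of $\Sigma_\beta$ has this length.

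The only step that needs any real justification is the monotonicity lemma, and even this is essentially immediate from the definition of $\Pat$. The substantive content of the theorem was already absorbed into Theorem~\ref{thm:rhoS} and Proposition~\ref{prop:open}, which together pinpoint $\beta_n$ as the precise threshold at which $\S_n$ first becomes entirely realized; thus no new obstacle arises here, and the proof is essentially a repackaging of those earlier results.
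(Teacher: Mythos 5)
Your proof is correct and follows essentially the same route as the paper: both rest on the equivalence $\S_m\subseteq\Al(\Sigma_\beta)\iff\beta>\beta_m$ established just before the theorem. The only small difference is how the step ``$\S_{n-1}\subseteq\Al(\Sigma_\beta)$ implies there are no forbidden patterns of length less than $n$'' is justified: you introduce the combinatorial monotonicity lemma (allowed patterns propagate to shorter lengths by truncating an orbit), whereas the paper relies implicitly on the strict increase of the sequence $\{\beta_n\}$, which follows from the bounds $n-2<\beta_n<n-1$ noted just before the theorem. Both justifications are valid; yours is a bit more self-contained, the paper's is terser, but the two arguments are otherwise identical.
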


\begin{proof}
We have seen that $\S_m\subseteq\Al(\Sigma_\beta)$ if and only if $\beta>\beta_m$. Thus, if $\beta_{n-1}<\beta\le \beta_n$, then $\S_{n-1}\subseteq\Al(\Sigma_\beta)$ but $\S_n\nsubseteq\Al(\Sigma_\beta)$,
so the shortest forbidden pattern of $\Sigma_\beta$ has length $n$.
\end{proof}

From the equation in Proposition~\ref{prop:Brho} satisfied by $\beta_n$, namely
$$\beta_n=n-2+\frac{1}{\beta_n}+\frac{1}{\beta_n+1}-\frac{1}{\beta_n^{n-2-\delta}(\beta_n+1)},$$
where $\delta=1$ ($\delta=0$) if $n$ is odd (even), and the fact that $n-2<\beta_n<n-1$ for $n\ge 3$, we obtain the asymptotic growth of $\beta_n$ as $n$ goes to infinity:
\beq\label{eq:betan} \beta_n=n-2+\frac{2}{n}+O(\frac{1}{n^2}).\eeq
In particular, $\beta_n$ is close to $n-2$ for large $n$. Similarly, one can show that for $\pi\in\{\rho^C,\rho^{RC},\tau\}$,
$$\B(\pi)=n-2+\frac{1}{n}+O(\frac{1}{n^2}).$$

Several questions arise when looking at the values of $\B(\pi)$ where $\pi$ ranges over all permutations. One of them
is to describe which algebraic numbers are obtained in this way, and what are the accumulation points. It is not hard to see, for example,
that positive integers are accumulation points. Another interesting question is how many permutations of length $n$ have $\B(\pi)<\beta$ for a fixed $\beta$. This is equivalent to counting the allowed patterns of the $\beta$-shift.

\subsection*{Acknowledgement} The author thanks Alex Borland for significant contributions, including several results in Section~\ref{sec:perm2word}, as part of his undergraduate research experience
funded by a Presidential Scholarship
from Dartmouth College.

\end{document}